\newtheorem{theorem}{Theorem}
\numberwithin{theorem}{section}
\newtheorem{lemma}[theorem]{Lemma}
\newtheorem{proposition}[theorem]{Proposition}
\newtheorem{remark}[theorem]{Remark}
\newtheorem{assumption}[theorem]{Assumption}
\newtheorem{corollary}[theorem]{Corollary}
\newtheorem{problem}[theorem]{Problem}
\let\oldpf\pf
\let\oldendpf\endpf
\def\pf{\begingroup \oldpf}
\def\endpf{\hfill~\qed \oldendpf \endgroup}
\let\csname endpf*\endcsname=\endpf
\newenvironment{proof}[1][Proof.]{\begin{pf*}{\MakeUppercase{#1}}}{\end{pf*}}
\newcommand{\mR}{\mathbb{R}}
\newcommand{\mS}{\mathbb{S}}
\newcommand{\st}{\text{s.t.}}
\DeclareMathOperator*{\im}{Im}
\DeclareMathOperator*{\rank}{rank}
\DeclareMathOperator*{\tr}{tr}
\DeclareMathOperator*{\mE}{\mathbb{E}}
\DeclareMathOperator*{\mP}{\mathbb{P}}
\DeclareMathOperator*{\cov}{cov}
\newcommand{\Acl}{A_{cl}}
\newcommand{\zero}{\mathbf{0}}
\newcommand{\rvv}{\bm{v}}
\newcommand{\Qtrue}{\bar{Q}}
\newcommand{\qtrue}{\bar{q}}
\newcommand{\Rtrue}{\bar{R}}
\newcommand{\Ptrue}{\bar{P}}
\newcommand{\etatrue}{\bar{\eta}}
\newcommand{\xitrue}{\bar{\xi}}
\newcommand{\mfS}{\mathfrak{S}}
\newcommand{\mfR}{\mathfrak{R}}
\newcommand{\mfStrue}{\bar{\mfS}}
\newcommand{\mfRtrue}{\bar{\mfR}}
\newcommand{\mscrF}{\mathscr{F}}
\newcommand{\mfx}{\bm{x}}
\newcommand{\mfy}{\bm{y}}
\newcommand{\mfz}{\bm{z}}
\newcommand{\mfw}{\bm{w}}
\newcommand{\mfu}{\bm{u}}
\newcommand{\mfN}{\bm{N}}
\newcommand{\mfY}{\bm{Y}}
\newcommand{\g}{g}
\newcommand{\gtrue}{\bar{\g}}
\newcommand{\betatrue}{\bar{\beta}}
\newcommand{\gammatrue}{\bar{\gamma}}
\newcommand{\Htrue}{\bar{H}}
\edef\endfrontmatter{%
  \unexpanded\expandafter{\endfrontmatter}
  \noexpand\endNoHyper 
}
\begin{document}

\begin{frontmatter}
  \title{Statistically consistent inverse optimal control for discrete-time indefinite linear-quadratic systems\thanksref{footnoteinfo}}

  \thanks[footnoteinfo]{This work was partially supported by National Natural Science Foundation (NNSF) of China under Grant 62103276, and partially by the Wallenberg AI, Autonomous Systems and Software Program (WASP) funded by the Knut and Alice Wallenberg Foundation.}
  
  \author[SJTU]{Han Zhang}\ead{zhanghan\_tc@sjtu.edu.cn},
  \author[CHALMERS_AND_GU]{Axel Ringh}\ead{axelri@chalmers.se}
  
  \address[SJTU]{Department of Automation, School of Electronic Information and Electrical Engineering, Shanghai Jiao Tong University, Shanghai, China}
  \address[CHALMERS_AND_GU]{Department of Mathematical Sciences, Chalmers University of Technology and University of Gothenburg, 41296 Gothenburg, Sweden}  

  \begin{keyword}
Inverse optimal control, Indefinite linear quadratic regulator, System identification, Time-varying system matrices, Convex optimization, Semidefinite programming, Inverse reinforcement learning
  \end{keyword}

  \begin{abstract}
The Inverse Optimal Control (IOC) problem is a structured system identification problem that aims to identify the underlying objective function based on observed optimal trajectories. This provides a data-driven way to model experts' behavior. In this paper, we consider the case of discrete-time finite-horizon linear-quadratic problems where: the quadratic cost term in the objective is not necessarily positive semi-definite; the planning horizon is a random variable; we have both process noise and observation noise; the dynamics can have a drift term; and where we can have a linear cost term in the objective. In this setting, we first formulate the necessary and sufficient conditions for when the forward optimal control problem is solvable. Next, we show that the corresponding IOC problem is identifiable. 
Using the conditions for existence of an optimum of the forward problem, we then formulate an estimator for the parameters in the objective function of the forward problem as the globally optimal solution to a convex optimization problem, and prove that the estimator is statistical consistent. Finally, the performance of the algorithm is demonstrated on two numerical examples.
  \end{abstract}

\end{frontmatter}

\section{Introduction}\label{sec:introduction}
Optimal control is a powerful framework in which control decisions are performed in order to minimize some given objective function; see, e.g., one of the monographs \citep{anderson2007optimal, bertsekas2000dynamic}. In fact, many processes in nature can be modelled as optimal control problems with respect to some criteria \citep{alexander1996optima}. However, in applications of optimal control, a fundamental problem is to design an appropriate objective function. In order to induce an appropriate control response, the object function needs to be adapted to the contextual environment in which the system is operating. This is a difficult task, which relies heavily on the designers' experience and imagination.

Instead of designing the cost criteria,
one way to overcome this difficulty would be to  identify the cost function from the observations of an expert system that behaves ``optimally" in the environment and thus ``imitating" the expert behaviour.
The latter is known as Inverse Optimal Control (IOC) \citep{kalman1964linear}, and has received considerable attention.  
In particular, IOC reconstructs the objective function of the expert system and hence predicts the closed-loop system's behaviour using observed data as well as the knowledge of underlying system dynamics. The problem can be categorized as gray-box system identification \citep[p.~13]{ljung1999system}, and is also closely related to inverse reinforcement learning \citep{ng2000algorithms}.

As one of the most classical optimal controller designs, linear-quadratic optimal regulators has been widely used in engineering. Though most of the literature considers the case of $Q$ (the penalty parameter for the states) being positive semi-definite, indefinite linear-quadratic optimal control \citep{chen1998stochastic,ran1993linear,rami2002discrete, ferrante2015note, ferrante2016discussion} has found applications in, e.g., mathematical finance \citep{zhou2000continuous},  crowd evacuation \citep{toumi2020tractable, toumi2021spatial}, and controller design for automatic steering of ships \citep{reid1983optimal}. To this end, we are motivated to develop an IOC framework for general indefinite linear-quadratic optimal control.
Linear-quadratic IOC problem has been studied under many different settings, including the infinite-horizon case in both continuous time \citep{boyd1994linear, anderson2007optimal} and discrete time \citep{priess2014solutions}, respectively, as well as the finite-horizon case in both continuous time \citep{li2018convex, li2020continuous} and discrete time \citep{keshavarz2011imputing, zhang2019inverse, yu2021system, zhang2021inverse, zhang2022statistically}, respectively. 
IOC is also closely connected to inverse reinforcement learning \citep{ng2000algorithms}, and this perspective has been used in \citep{xue2021inverse, lian2021robust, xue2021inverse_tracking} to consider infinite horizon discrete-time and continuous-time linear-quadratic set-ups for regulation, tracking, and adversary scenarios, respectively. 
However, to the best of our knowledge, IOC frameworks for general indefinite linear-quadratic optimal control has not been considered yet.
Furthermore, there are also other important aspects that have not been fully investigated in the aforementioned literature.
More precisely, any real-world data would inevitably contain noise: it can either be process noise, observation noise, or both. Therefore, from robustness and accuracy perspectives, it is important to have a statistically consistent estimator, i.e., that converges to the true underlying parameter values as the number of observation grows.
Moreover, many of the aforementioned IOC algorithms that are based on optimization either suffer from the fact that the estimation problems are nonconvex  \citep{keshavarz2011imputing, zhang2019inverse, yu2021system}, and can therefore have issues with local minima, or suffer from the fact the estimation procedure needs to know the control gain a priori \citep{boyd1994linear, anderson2007optimal, priess2014solutions, li2018convex, li2020continuous}. In the latter case, the estimation normally needs to be done in a two-stage procedure and the information is thus not used in the most efficient way.
Furthermore, most of the literature on linear-quadratic IOC consider the regulation problem. However, in many experimental set-ups, an expert agent may have more complicated tasks than regulation, e.g., tracking a reference signal.
Finally, real-world data can be of different time lengths, and this needs to be handled in a systematic way in order not to deteriorate the estimates.

In this work, we address these issues. More specifically, we extend our previous work \citep{zhang2021inverse} and the conference version \citep{zhang2022statistically} of this work, and consider the generalized linear-quadratic,  indefinite, discrete-time IOC problem.  
In particular, in \citep{zhang2021inverse}, we only consider the IOC problem of linear quadratic regulator with no process noise, and there is no linear state terms in the objective function.  Moreover, in that paper the parameter $Q$ is positive semi-definite, and the time-horizon length is fixed.  On the other hand, compared to the conference version \citep{zhang2022statistically},  we further consider the case that involves observation noise and indefinite matrix $Q$ in the objective function.

The contribution of this work is three-fold:
\begin{enumerate}
\item We give necessary and sufficient conditions for the well-posedness of the generalized discrete-time finite-horizon indefinite linear-quadratic optimal control problem. Specifically, we do not assume that the running cost is positive semi-definite, and we include a linear term in the objective function, a forcing term in the dynamics, and process noise.
\item We prove the identifiability of the parameters in the objective function.
\item We construct an IOC algorithm that works for both the positive semi-definite and the indefinite case. The algorithm is based on convex optimization, and we show that its unique optimal solution is the ``true" underlying parameters in the objective function of the forward problem.  Moreover, the constructed optimization problem, which contains an expectation in the objective function, is approximated by the empirical mean in practice and we also show that the estimator based on this approximation is statistically consistent. This means that the estimator converge in probability to the true underlying parameter when the number of observations goes to infinity. In addition, the convex optimization formulation guarantees that the statistically consistent estimate that corresponds to the global optimum can actually be attained in practice.
\end{enumerate}

The article is organized as follows. In Section \ref{sec:problem_formulation}, we formulate the forward and inverse problem, and specify the assumptions we use. Next, in Section \ref{sec:forward_problem}, we analyze the forward problem and prove the necessary and sufficient conditions for when it is well-posed.  Section \ref{sec:identifiability} investigates the identifiability issue of the formulated IOC problem, and in particular prove that the formulated IOC problem is identifiable. 
In Section \ref{sec:IOC_algorithm}, we construct the IOC algorithm, construct the estimator, and prove that the latter is statistical consistent. To illustrate the performance of the proposed algorithm, in Section \ref{sec:numerical_examples} we include some numerical examples. Finally, we conclude the paper in Section \ref{sec:conclusion}. For improved readability, some proofs are deferred to the Appendix.

\textit{Notation:} $\mathbb{S}^n$ denotes the set of $n\times n$ symmetric matrices, while $\mathbb{S}^n_+$ denotes the set of $n\times n$ positive semi-definite matrices. $\|\cdot\|$ denotes $l_2$-norm and $\|\cdot\|_F$ denotes Frobenius norm.
$\mathscr{B}^{n}_{\varphi}(x) := \{ y \in \mR^n \mid \| x - y \| < \varphi \}$ denotes the open ball of radius $\varphi$ centered at $x$, and $\bar{\mathscr{B}}^{n}_{\varphi}(x)$ denotes its closure. Moreover, $\mathbb{S}^n_+(\varphi) := \{ G \in \mathbb{S}^n_+ \mid \| G \|_F \leq \varphi \}$.
For $G_1,G_2\in\mathbb{S}^n$, we denote $G_1\succeq G_2$ as Loewner partial order of $G_1$ and $G_2$, i.e., $G_1-G_2\in\mathbb{S}^n_+$. Further, $G^\dagger$ denotes the Moore-Penrose pseudo-inverse of $G$.
For a square matrix $G = \left[ \begin{smallmatrix} G_{11} & G_{12} \\ G_{21} & G_{22} \end{smallmatrix} \right]$, where $G_{11}$ and $G_{22}$ are square, we define the Schur complements $G \backslash G_{22} = G_{11} - G_{21} G_{22}^\dagger G_{12}$ and $G \backslash G_{11} = G_{22} - G_{12} G_{11}^\dagger G_{21}$ (see, e.g., \citep[Sec.~1.6]{horn2005basic}).
For integers $\nu_1, \nu_2$, we let $\nu_1:\nu_2$ denote $\nu_1, \nu_1+1, \ldots, \nu_2$, and define the expression as empty if $\nu_2 < \nu_1$.
In addition, by $\im$ and $\ker$ we denote the image space and kernel, respectively, by $^\perp$ we denote the orthogonal complement, and by $\neg$ we denote the (mathematical) negation of a statement. Finally, we use \textbf{\textit{italic bold font}} to denote stochastic elements, and use $\overset{p}\rightarrow$ to denote convergence in probability, i.e., for a sequence of random elements $\{ \bm{a}^i \}_ {i = 1}^\infty$ and a random element $\bm{a}$, $\bm{a}^i \overset{p}\rightarrow \bm{a}$ means that for all $\varepsilon > 0$, $\lim_{i \to \infty} \mP(\|\bm{a}^i - \bm{a}\| > \varepsilon) = 0$.

\section{Problem formulation}\label{sec:problem_formulation}
In this section, we introduce the forward problem as well as the inverse problem.
To solve the inverse problem, we use measured (noisy) optimal trajectories from an expert that performs the given task multiple times. This gives multiple demonstration trajectories that can be used to learn the cost.
However, while the underlying ``decision principle" is the same in all the demonstration trajectories, these trajectories may have different lengths.
We are hence motivated to cover those demonstrations with one mathematical formulation.

We start by introducing the mathematical formulation of the forward optimal control problem. To this end, let $(\Omega,\mathcal{F},\mathbb{P})$ be a probability space that carries a random vector $\bar{\mfx}\in\mR^n$, stochastic processes $\{\mfw_t\in\mR^n\}_{t=1}^\infty$, $\{\rvv_t\in\mR^n\}_{t=1}^\infty$ (the measurement noise to appear in \eqref{eq:noisy_observation}), and a random variable $\mfN\in\{2, 3, \cdots,\nu\}\subset\mathbb{Z}_+$.
It is assumed that for each realization $(\bar{x},N)$ of the random element $(\bar{\mfx},\mfN)$ (corresponding to the initial position and planning horizon length), the agent's control decision $\mfu_t$ is determined by a stochastic generalized linear-quadratic control problem, namely,
\begin{subequations}\label{eq:stochastic_forward_problem}
\begin{align}
\min_{\substack{\mfx_{1:\nu} , \\ \mfu_{1:\nu}}}
  \;
  & \; J_N := \mE_{\mfw_{\nu-N+1:\nu-1}} \Big[\frac{1}{2} \mfx_{\nu}^{T} \Qtrue \mfx_{\nu} +\qtrue^T\mfx_{\nu}  \nonumber\\
  & \;\; +\sum_{t = \nu - N+1}^{\nu-1} [\frac{1}{2} \mfx_{t}^{T} \Qtrue \mfx_{t} +\qtrue^T\mfx_t + \frac{1}{2}\mfu_{t}^T\Rtrue\mfu_t ] \Big]\label{eq:stochastic_forward_problem_cost}\\
  \st
  & \; \mfx_{t+1} = A\mfx_{t} + B\mfu_{t}+d+\mfw_t, \nonumber\\
  &\quad t = \nu\!-\!N\!+\!1:\nu\!-\!1, \label{eq:stochastic_forward_problem_dynamics} \\
  & \; \mfx_{t + 1} = \mfx_t, \quad t=1:\nu-N\label{eq:stochastic_forward_problem_before_init_cond} \\
  & \; \mfx_1 = \bar{x}, \label{eq:stochastic_forward_problem_init_cond}\\
  & \; \mfu_1=\ldots=\mfu_{\nu-N} = 0,\label{eq:stochastic_control_stand_still}
\end{align}
\end{subequations}
where $A \in \mR^{n \times n}$, $B \in \mR^{n \times m}$, $\Qtrue\in\mathbb{S}^n$, $\Rtrue\in\mathbb{S}^m$, and $\qtrue,d \in\mathbb{R}^n$.
More specifically, the minimization in \eqref{eq:stochastic_forward_problem} is over admissible control strategies with complete state information, i.e., $\mfu_t$ is a function that maps from $\mR^n$ to $\mR^m$, $\mfu_t : \mfx_{t} \mapsto \mfu_t(\mfx_{t})$ (see, e.g., \citep[Chp.~8]{astrom1970introduction}).

The formulation \eqref{eq:stochastic_forward_problem} is motivated by the fact that an agent can have different time horizon lengths (and initial values) to complete different tasks,
while the underlying decision principle (i.e., running cost) remains unchanged since the principle is connected to the agent's characteristics.
In particular, given a realization of the time-horizon length $\mfN=N$ and initial value $\bar{\mfx} = \bar{x}$, the agent starts to apply its control from the initial value $\bar{x}$ at the time instant $t=\nu-N+1$ and the agent maintains \emph{the same running cost} for each control execution.
This formulation gives a systematic way to handle real-world data with different time lengths (see \citep{zhang2022statistically}).
Moreover, note that since the dynamics \eqref{eq:stochastic_forward_problem_dynamics} and the running cost in \eqref{eq:stochastic_forward_problem_cost} are time-invariant, by Bellman's principle of optimality, \eqref{eq:stochastic_forward_problem} can be reformulated to optimal control problems with planning horizon length $N$ and that start to control from an initial state at time point $t=1$. However, it turns out to be convenient to align the optimal demonstration trajectories with different lengths at the end time point, and view the demonstration trajectories as if the expert starts to control the system at different time instants, i.e., to formulate the problem as in \eqref{eq:stochastic_forward_problem}.

\begin{remark}\label{rem:forward_problem_formulation_rem}
The reason why we only consider a time-invariant external forcing term $d$ in \eqref{eq:stochastic_forward_problem_dynamics} is to simplify the presentation. The results of the paper also hold for time-varying forcing terms $d_t$, provided that the agent knows all the future forcing terms for $t = \nu-N+1:\nu-1$.
Similarly, this formulation can be extended to tracking-problems by letting the linear cost term be time-varying, $q_t = -Qx_t^r$ where $x_t^r$ is the reference signal, and the results in the paper follows analogously (cf.~\citep{zhang2022statistically}). Notably, with time-varying $d_t$ or $q_t$, the arguments in the paragraph just before (using time-invariance of running cost and dynamics) does not hold. Nevertheless, the formulation \eqref{eq:stochastic_forward_problem} is still of interest in application such as the tracking in rehabilitation trainings, see \citep{zhang2022statistically}.
\end{remark}

In the remainder of the paper, we make the following assumptions.

\begin{assumption}[Controlability and full rank]\label{ass:controlability_and_full_rank}
The system $(A,B)$ is controllable, $A$ is invertible, and $B$ is of full-column rank.
\end{assumption}

\begin{assumption}[Independent white random elements]\label{ass:IID}
The discrete time stochastic processes $\{\mfw_t\}_{t=1}^\infty$ and $\{\rvv_t\}_{t=1}^\infty$ are independent zero-mean white-noise processes. More specifically, this means that $\mE[\mfw_t]=0$, $\mE[\rvv_t]=0$, $\forall t$, and $\cov(\mfw_t,\mfw_s) = \Sigma_w \delta(t - s)$, $\cov(\rvv_t,\rvv_t)=\Sigma_v \delta(t - s)$, where $\delta(t)$ is the Dirac-delta function, and where $\Sigma_w \succeq 0$ and $\Sigma_v \succeq 0$ are a priori known.
Moreover, the random element $(\bar{\mfx},\mfN)$ is independent of the two stochastic processes.
\end{assumption}

The rationale behind the first assumption is that we are considering a controllable discrete-time linear system that is not over-actuated,%
\footnote{It means that, given the system state's evolution from time step $t$ to $t+1$, there is only one possible control input to realize that.}
and that is sampled from a continuous-time system.
Next, make the following mild assumptions regarding the stochastic planning horizon $\mfN$.

\begin{assumption}[Support of the planning horizon]\label{ass:planning_horizon}
The constant $\nu \in \mathbb{Z}_+$ is known, and $\nu \ge n+1$. Moreover, the probability distribution for $\mfN$ satisfies $\mP(\mfN \in [2, \nu])=1$, and $\mP(\mfN = \nu) > 0$. 
\end{assumption}

The above assumption means that the longest possible planning horizon is known, that it is sufficiently long, and that the longest horizon $\nu$ can be realized, i.e., it has a nonzero probability.
Next, note that since $\Qtrue$ might not be positive semi-definite, \eqref{eq:stochastic_forward_problem} might not admit an optimal solution (see, e.g. \citep{ferrante2015note,ferrante2016discussion}).
We analyze the well-posedness of the forward problem  \eqref{eq:stochastic_forward_problem} in depth in Section~\ref{sec:forward_problem}. However, before that, we have the following proposition which illustrates the reason why we emphasize the longest time-horizon length in Assumption \ref{ass:planning_horizon}.

\begin{proposition}\label{prop:stability_longest_horizon}
Under Assumptions~\ref{ass:controlability_and_full_rank} and \ref{ass:IID}, if the optimal control problem \eqref{eq:stochastic_forward_problem} with the objective function given by $(\Qtrue, \qtrue, \Rtrue)$ admits a solution for planning horizon $N = \nu$ for any $\bar{x}\in\mR^n$, then it admits a solution for all $N = 2:\nu$ for any $\bar{x}\in\mR^n$.
\end{proposition}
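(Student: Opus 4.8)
The plan is to show that solvability for the maximal horizon $N=\nu$ propagates downward to every shorter horizon $N\in\{2,\dots,\nu-1\}$. The key structural observation is that in the formulation \eqref{eq:stochastic_forward_problem}, a problem with horizon $N<\nu$ is, after the trivial ``stand-still'' phase encoded by \eqref{eq:stochastic_forward_problem_before_init_cond}--\eqref{eq:stochastic_control_stand_still}, exactly the same type of linear-quadratic problem but with fewer running-cost stages. So I would first reduce \eqref{eq:stochastic_forward_problem} to a genuine $N$-stage LQ problem on the time indices $\nu-N+1:\nu$ with initial condition $\mfx_{\nu-N+1}=\bar x$, and then argue that solvability of the $(N+1)$-stage problem (from \emph{any} $\bar x$) implies solvability of the $N$-stage problem (from any $\bar x$); induction downward from $N=\nu$ then finishes the argument.

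For the inductive step I would use the dynamic-programming / backward-Riccati viewpoint. Solvability of an indefinite LQ problem over a given horizon is equivalent to the associated sequence of ``value-function'' quadratic forms being well-defined and bounded below — concretely, to the generalized discrete Riccati recursion being executable (the relevant $R + B^T P_{t+1} B$ being such that the minimization over $\mfu_t$ has a finite infimum, i.e.\ $R+B^TP_{t+1}B\succeq 0$ together with a range/consistency condition, and the resulting $P_t$ being well-defined). The point is that the backward recursion for the $N$-stage problem starting from the terminal cost $(\Qtrue,\qtrue)$ produces \emph{exactly the last $N$ matrices} appearing in the recursion for the $(N+1)$-stage problem — the recursion ``doesn't know'' how long the horizon is, it only propagates backward. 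Hence if the length-$\nu$ recursion runs to completion and yields finite infima for all $\bar x$, so does every truncation of it of length $N<\nu$, and the infimum of the $N$-stage cost is $\tfrac12\bar x^TP_{\nu-N+1}\bar x + \text{(linear)} + \text{const}$, which is finite; one then reconstructs an optimal control/state trajectory from the $P_t$'s, padding with the stand-still phase to obtain a feasible point of \eqref{eq:stochastic_forward_problem} attaining the infimum. The process noise $\mfw_t$ and forcing term $d$ only shift the linear and constant parts of the value function (this is where Assumption~\ref{ass:IID}, $\mE[\mfw_t]=0$, and the fact that the noise is independent of the controls are used), and do not affect the crucial positive-semidefiniteness/range conditions, so they cause no trouble.

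The main obstacle, and the step deserving the most care, is making precise the equivalence ``the length-$\nu$ problem is solvable for all $\bar x$ $\iff$ the Riccati-type recursion of length $\nu$ executes successfully,'' and in particular verifying that the intermediate quantities $R+B^TP_{t+1}B$ and the solvability (range) conditions that arise in the \emph{shorter} recursions are automatically implied by those of the full-length recursion — i.e.\ that there is no ``boundary effect'' whereby truncating the horizon could destroy a needed semidefiniteness condition. This is plausible because the recursion is run backward from the same terminal data, so the shorter recursion's matrices are literally a suffix of the longer one's; but one must be careful that ``solvable for all $\bar x$'' is genuinely needed (the quantifier over $\bar x$ is what lets us extract the pointwise conditions on the $P_t$, rather than just finiteness at one point). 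I would lean on the characterizations of well-posedness of the forward problem established in Section~\ref{sec:forward_problem} to package this cleanly, so that the proof here becomes: invoke that characterization for $N=\nu$, observe the suffix property of the recursion, and conclude the characterization holds for each $N<\nu$.
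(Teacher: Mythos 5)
Your high-level structure is right, and you have correctly isolated the crux: everything hinges on the implication ``the horizon-$\nu$ problem is solvable for every $\bar{x}$ $\Longrightarrow$ the Riccati/semidefiniteness conditions hold at \emph{every} step $t=1:\nu-1$.'' The converse direction and the suffix property of the backward recursion are indeed immediate. But this key implication is exactly the content of the proposition, and your proposed way of discharging it --- ``lean on the characterizations of well-posedness established in Section~\ref{sec:forward_problem}'' --- does not work: Theorem~\ref{thm:indefinite_LQR} characterizes membership in $\mscrF(\Rtrue)$, i.e., solvability for \emph{all} horizons $N=2:\nu$ simultaneously, and its proof of ``1)$\implies$2)'' only shows that a failure of \eqref{eq:psd_iff_cond} or \eqref{eq:kernel_containment} at step $t_0=\nu-N+1$ makes the \emph{horizon-$N$} problem unbounded (where the state at time $t_0$ is a freely chosen initial condition). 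It does not show that such a failure at an interior time $t_0>1$ makes the horizon-$\nu$ problem unbounded, because in that problem the state at time $t_0$ is not free --- it must be reached from $\bar{x}$ at time $1$, and the cost accrued on $1:t_0-1$ must be controlled. Invoking the theorem here is therefore either inapplicable or circular, and the step you label ``plausible'' is left unproven.

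The missing ingredient is a concrete construction, which is what the paper's proof supplies via contraposition: assuming unboundedness for some horizon $N$ and some initial state, it considers the horizon-$\nu$ problem, sets $\mfu_t=0$ on the first $\nu-N$ stages, and \emph{back-propagates} the bad state through $\mfx_t=A^{-1}(\mfx_{t+1}-d-\mfw_t)$ (this is where the invertibility of $A$ from Assumption~\ref{ass:controlability_and_full_rank} is used) to obtain an admissible initial condition $\bar{x}'$; the cost accumulated over those first stages is then a fixed finite quantity, while scaling the destabilizing control at time $\nu-N+1$ drives $J_\nu\to-\infty$. Note that a naive forward-reachability argument would not suffice when $t_0-1<n$, since the reachable set in fewer than $n$ steps need not be all of $\mR^n$; the freedom to pick $\bar{x}'$ together with $A^{-1}$ is what closes this. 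Your proposal, as written, contains no such construction, so there is a genuine gap at precisely the step you flagged.
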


\begin{proof}
See appendix.
\end{proof}

With Assumption~\ref{ass:planning_horizon} and Proposition~\ref{prop:stability_longest_horizon} in mind, we are thus interested in parameters that belong to the following set:
\begin{align*}
&\mscrF(\Rtrue) =  \{ (Q, q) \in \mS^n \times \mR^n  \mid  \text{the optimal control problem} \\
& \text{\eqref{eq:stochastic_forward_problem} with the objective function given by $(Q, q, \Rtrue)$ admits}\\
& \text{solutions a.s.~under the distribution of $\bar{\mfx}$ and for all}\\
& N \in \{2, 3, \ldots,\nu\}\}. 
\end{align*}
More specifically, if $(\Qtrue, \qtrue) \in \mscrF(\Rtrue)$, then the forward problem is well-posed for any $N\in\{2,\ldots,\nu\}$ under the distribution of $\bar{\mfx}$ almost surely.
In addition, for the inverse problem we assume that the observation of the optimal states $\mfx_{1:\nu}$ are contaminated by observation noise:
\begin{align}\label{eq:noisy_observation}
\mfy_t = \mfx_t+\rvv_t,\quad t=1:\nu,
\end{align}
and that we observe $M$ trials of the agent. More precisely, let $\mfy_t^i$ have the same distribution as $\mfy_t$, for all $i = 1:M$ and all $t = 1:\nu$.
Then the observed $M$ trajectories of the agent's trials, $\{y_t^i\}_{i=1}^M$, are just realizations of the  I.I.D.~random vectors $\{\mfy_t^i\}_{i=1}^M$.

Before we formulate the IOC problem, we also make the following assumption.

\begin{assumption}[Initial value distribution]\label{ass:persistent_excitation}
The random element $(\bar{\mfx},\mfN)$ is such that $\mE[\|\bar{\mfx}\|^2]<\infty$. Moreover, for all $N \in \{ 2, \ldots, \nu\}$ such that $\mP(\mfN = N) > 0$,
it holds that for all $\chi \in \mR^n$, there exists a $\rho >  0$ such that $\mP(\bar{\mfx} \in \mathscr{B}^{n}_{\epsilon}(\rho \chi) \mid \mfN = N) >0$ for all $\epsilon > 0$.
\end{assumption}

Intuitively speaking, the above assumption states that, for each planning horizon length of interest, the initial value for the forward problem can be in any ``direction'' from the origin. 
This turns out to be important for both the forward and the inverse problem. The latter will be discussed in Section~\ref{sec:identifiability}.

With the setup presented in this section, we summarize the IOC problem to be considered in this paper. For the sake of simplicity,  we consider the case $\Rtrue=I$ when designing the IOC algorithm.

\begin{problem}[General stochastic linear-\\ quadratic IOC]
Suppose the unknown $(\Qtrue,\qtrue) \in \mscrF(I)$.
Given the optimal state trajectory observations $\{y_t^i\}_{t=1}^{\nu}$ of the agent's trials $i=1:M$ that are governed by \eqref{eq:stochastic_forward_problem}, estimate the corresponding $(\Qtrue,\qtrue)$ in the objective function \eqref{eq:stochastic_forward_problem_cost} that governs the agents' motion.
\end{problem}

\section{Forward problem analysis}\label{sec:forward_problem}

Before we present the IOC algorithm set-up, we first need to analyze the forward problem. More precisely, we need to characterize the set $\mscrF(\Rtrue)$ and find the necessary and sufficient optimality conditions for the existence of such generalized indefinite linear-quadratic optimal control. 
This is not only because we want to ensure that the forward-problem is well-behaved, but also to construct the IOC algorithm based on the optimality conditions.
Moreover, we also analyze the properties of the time-varying closed-loop system matrices that will be useful in developing the IOC algorithm.
For the theoretical development in this section, we do not assume that $\Rtrue=I$.

\subsection{Necessary and sufficient conditions for existence of optimal control}\label{sec:stochastic_optimality_conditions}

To this end, we first derive the necessary and sufficient conditions for existence of optimal control to \eqref{eq:stochastic_forward_problem}.
The results build on \citep{ferrante2015note}; in particular, some of the proof ideas are inspired by the proof in \citep[Thm.~2.1]{ferrante2015note}.
However, we not only extend the result to a more general setting of stochastic linear-quadratic problems, but also show that solvability of the forward problem, i.e., that $(\Qtrue,\qtrue) \in \mscrF(\Rtrue)$, can be characterized in different, but equivalent, ways. The main result of this section is the following.
\begin{theorem}[Boundedness of forward problem]\label{thm:indefinite_LQR}
Under Assumptions~\ref{ass:IID} and \ref{ass:persistent_excitation}, the following statements are equivalent:
\begin{enumerate}
\item $(\Qtrue,\qtrue) \in \mscrF(\Rtrue)$.
\item Let $\Ptrue_{1:\nu}$ and $\etatrue_{1:\nu}$ be generated by the following Riccati iterations
\begin{subequations}\label{eq:generalized_riccati_iterations}
\begin{align}
 \Ptrue_{\nu} & = \Qtrue, \label{eq:generalized_riccati_1}\\
\Ptrue_t & =  A^T \Ptrue_{t+1} A  + \Qtrue - A^T \Ptrue_{t+1} B (B^T \Ptrue_{t+1} B  + \Rtrue)^\dagger\nonumber \\
 & \; \times B^T \Ptrue_{t+1} A, \quad t=1:\nu-1; \label{eq:generalized_riccati_2}\\
 \etatrue_{\nu} &= \qtrue,\label{eq:generalized_riccati_3}\\
 \etatrue_t &= \left(A-B(B^T\Ptrue_{t+1}B+\Rtrue)^\dagger B^T\Ptrue_{t+1}A\right)^T\nonumber\\
 &\:\times(\etatrue_{t+1}+\Ptrue_{t+1}d)+\qtrue,\quad t=1:\nu-1.\label{eq:generalized_riccati_4}
\end{align}
\end{subequations}
Denote
\begin{subequations}\label{eq:psd_kernel_containment_cond}
\begin{align}
\mfStrue_t &:= B^T\Ptrue_{t+1}A, \label{eq:mfS_true}\\
\mfRtrue_t &:=B^T\Ptrue_{t+1}B+\Rtrue, \label{eq:mfR_def} \\
\gtrue_t &:= B^T\etatrue_{t+1}+B^T \Ptrue_{t+1}d \label{eq:mfg_true}.
\end{align}
It holds that
\begin{align}
\mfRtrue_t&\succeq 0,\label{eq:psd_iff_cond}\\
\ker(\mfRtrue_t)&\subset \Big[\ker(\mfStrue_t^T)\cap \ker(\gtrue_t^T)\Big], \label{eq:kernel_containment}
\end{align}
\end{subequations}
for all $t=1:\nu-1$.
\item There exists $\{ \Ptrue_{t} \in \mathbb{S}^n \}_{t = 1:\nu}$, $\{ \etatrue_{t} \in\mathbb{R}^{n} \}_{t = 1:\nu}$, and $\{ \xitrue_t \in \mR \}_{t= 1 : \nu}$ such that
\begin{subequations}\label{eq:nes_suff_existence_solution}
\begin{align}
& \Ptrue_{\nu} = \Qtrue, \quad \etatrue_{\nu}=\qtrue,\label{eq:nes_suff_existence_solution_1} \\
& \Htrue_t := 
\begin{bmatrix}
\mfRtrue_t & \mfStrue_t & \gtrue_t\\
\mfStrue_t^T & A^T\Ptrue_{t+1}A+\Qtrue-\Ptrue_t & \betatrue_t\\
\gtrue_t^T & \betatrue_t^T & \xitrue_t
\end{bmatrix} \succeq 0 \label{eq:nes_suff_existence_solution_2} \\
& \rank(\Htrue_t) = \rank(\mfRtrue_t) \label{eq:nes_suff_existence_solution_3}
\end{align}
\end{subequations}
where $\betatrue_t := \qtrue+A^T\Ptrue_{t+1}d+A^T\etatrue_{t+1}-\etatrue_t$ and where $\mfStrue_t$, $\mfRtrue_t$, and $\gtrue_t$ are as in \eqref{eq:mfS_true}, \eqref{eq:mfR_def}, and \eqref{eq:mfg_true}, respectively, for all $t = 1:\nu-1$.
\item The Hamilton-Jacobi-Bellman equation (HJBE)
\begin{subequations}\label{eq:stochastic_dyn_prog}
\begin{align}
&V_{\nu}(\chi_\nu) := \frac{1}{2}\chi_{\nu}^T\Qtrue \chi_{\nu}+\qtrue^T\chi_{\nu},\label{eq:stochastic_dyn_prog_terminal}\\
&V_t(\chi_t) =\min_{\mu_t} \Big\{\frac{1}{2}\chi_t^T \Qtrue \chi_t+\qtrue^T\chi_t + \frac{1}{2}\mu_t^T\Rtrue\mu_t \label{eq:stochastic_dyn_prog_iteration}\\
&+  \mE_{\mfw_t}\left[V_{t+1}(A\chi_t + B\mu_t + d+\mfw_t)\right] \Big\}, \:t=1:\nu-1\nonumber
\end{align}
\end{subequations}
has a solution. More precisely, this means that $V_t(\chi_t)$ is bounded from below for any $\chi_t\in\mR^n$, for $t=1:\nu-1$.
Moreover, the solution has the form 
\begin{equation}\label{eq:Vt}
V_{t}(\chi_t) =\frac{1}{2} \chi_t^T \Ptrue_t \chi_t+\etatrue_t^T\chi_t+\gammatrue_t, \quad t = 1:\nu,
\end{equation}
where $\Ptrue_{1:\nu}$ and $\etatrue_{1:\nu}$ are generated by \eqref{eq:generalized_riccati_iterations} and 
\begin{subequations}\label{eq:stochastic_riccati_iteration_constant_term}
\begin{align}
\gammatrue_{\nu} & = 0\\
\gammatrue_t & = \gammatrue_{t+1}-\frac{1}{2} \gtrue_t^T \mfRtrue_t^\dagger \gtrue_t+\frac{1}{2}d^T\Ptrue_{t+1}d+\etatrue_{t+1}^Td\nonumber\\
&\;\;\;\;+\frac{1}{2}\tr(\Ptrue_{t+1}\Sigma_w),\quad t=1:\nu-1,
\end{align}
\end{subequations}
with $\mfRtrue_{1:\nu-1}$ and $\gtrue_{1:\nu-1}$ as in \eqref{eq:mfR_def} and \eqref{eq:mfg_true}, respectively.
\end{enumerate}
In addition, if any of the four above conditions hold, then the optimal control signal $\mfu_t$ for \eqref{eq:stochastic_forward_problem} is parametrized by any arbitrary vector $\lambda_t \in\mathbb{R}^n $, and is given by
\begin{subequations}\label{eq:optimal_ctrl_formula}
\begin{align}
\mfu_t & = -\mfRtrue_t^\dagger(\mfStrue_t \mfx_t +\gtrue_t)+\mathcal{P}^{\ker(\mfRtrue)}_t \lambda_t, \\
&\quad\:t=\nu-N+1:\nu-1, \nonumber \\
\mfu_t &= 0,\:t=1:\nu-N,
\end{align}
\end{subequations}
where $\mathcal{P}^{\ker(\mfRtrue)}_{t} = I-\mfRtrue_{t}^\dagger\mfRtrue_{t}$ is the projection operator onto the kernel space of $\mfRtrue_{t}$.%
\footnote{For the fact that $\mathcal{P}^{\ker(\mfRtrue)}_{t} = I-\mfRtrue_{t}^\dagger\mfRtrue_{t}$ is indeed the projection operator onto the kernel space of $\mfRtrue_{t}$, see, e.g., \citep[Prop.~2.3]{engl2000regularization}.}
\end{theorem}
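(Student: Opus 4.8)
The plan is to prove the chain of equivalences $(1)\Leftrightarrow(4)$, $(4)\Leftrightarrow(2)$, $(2)\Leftrightarrow(3)$, and then derive the control formula \eqref{eq:optimal_ctrl_formula} from the dynamic programming argument. The natural backbone is $(1)\Leftrightarrow(4)$: a finite-horizon control problem with full state information is solvable if and only if the Bellman recursion produces value functions that are bounded below, so most of the work is to carry out the dynamic programming step \eqref{eq:stochastic_dyn_prog_iteration} explicitly under a quadratic ansatz \eqref{eq:Vt} and to track when the inner minimization over $\mu_t$ has a finite infimum. First I would perform the induction: assume $V_{t+1}(\chi)=\tfrac12\chi^T\Ptrue_{t+1}\chi+\etatrue_{t+1}^T\chi+\gammatrue_{t+1}$ is bounded below, substitute $\chi=A\chi_t+B\mu_t+d+\mfw_t$, take the expectation over $\mfw_t$ (using $\mE[\mfw_t]=0$ and $\cov(\mfw_t)=\Sigma_w$ from Assumption~\ref{ass:IID}, which is what produces the $\tfrac12\tr(\Ptrue_{t+1}\Sigma_w)$ term in \eqref{eq:stochastic_riccati_iteration_constant_term}), and collect terms. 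The resulting function of $\mu_t$ is $\tfrac12\mu_t^T\mfRtrue_t\mu_t+(\mfStrue_t\chi_t+\gtrue_t)^T\mu_t+(\text{terms independent of }\mu_t)$, with $\mfRtrue_t$, $\mfStrue_t$, $\gtrue_t$ exactly as defined in \eqref{eq:mfS_true}--\eqref{eq:mfg_true}. A quadratic $\tfrac12\mu^T\mfRtrue_t\mu + b^T\mu$ is bounded below if and only if $\mfRtrue_t\succeq0$ and $b\in\im(\mfRtrue_t)=\ker(\mfRtrue_t)^\perp$; when it is bounded below the minimum equals $-\tfrac12 b^T\mfRtrue_t^\dagger b$ and is attained at $\mu = -\mfRtrue_t^\dagger b + \mathcal P^{\ker(\mfRtrue_t)}_t\lambda$ for arbitrary $\lambda$. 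Requiring this for all $\chi_t\in\mR^n$ (which is legitimate because Assumption~\ref{ass:persistent_excitation} guarantees every direction $\rho\chi$ and, via the stand-still phase \eqref{eq:stochastic_control_stand_still} and the reachability from Assumption~\ref{ass:controlability_and_full_rank}, every state value is reached with positive probability, so a.s.-solvability for the distribution of $\bar{\mfx}$ forces solvability at every $\chi_t$) gives precisely $\mfRtrue_t\succeq0$ together with $\im(\mfStrue_t)\subset\im(\mfRtrue_t)$ and $\gtrue_t\in\im(\mfRtrue_t)$, i.e.\ \eqref{eq:psd_iff_cond}--\eqref{eq:kernel_containment}; on that event the plugged-back value function is again quadratic with $\Ptrue_t,\etatrue_t,\gammatrue_t$ given by \eqref{eq:generalized_riccati_2}, \eqref{eq:generalized_riccati_4}, \eqref{eq:stochastic_riccati_iteration_constant_term} — where one uses that $\mfRtrue_t^\dagger\mfRtrue_t$ annihilates the kernel so that $\mfStrue_t^T\mfRtrue_t^\dagger\mfStrue_t$ and $\mfStrue_t^T\mfRtrue_t^\dagger\gtrue_t$ are well-defined and match the formulas. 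This simultaneously establishes $(1)\Leftrightarrow(4)$, $(4)\Leftrightarrow(2)$ (since the kernel-containment statement \eqref{eq:kernel_containment} is just $\im(\mfStrue_t)\cup\{\gtrue_t\}\subset\im(\mfRtrue_t)$ rewritten via orthogonal complements), and the control formula.

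For $(2)\Leftrightarrow(3)$ I would argue purely in linear algebra: given \eqref{eq:generalized_riccati_iterations} define $\betatrue_t$ and set $\xitrue_t := \gtrue_t^T\mfRtrue_t^\dagger\gtrue_t$ (plus whatever scalar makes the Schur complement vanish — this is the slack one is free to choose, matching "there exists $\xitrue_t$"). Then the block matrix $\Htrue_t$ in \eqref{eq:nes_suff_existence_solution_2} is a bordered version of $\mfRtrue_t$, and the standard generalized Schur complement lemma says $\Htrue_t\succeq0$ with $\rank(\Htrue_t)=\rank(\mfRtrue_t)$ holds if and only if $\mfRtrue_t\succeq0$, the off-diagonal blocks lie in $\im(\mfRtrue_t)$, and the Schur complement $\begin{bmatrix}A^T\Ptrue_{t+1}A+\Qtrue-\Ptrue_t & \betatrue_t\\ \betatrue_t^T & \xitrue_t\end{bmatrix} - \begin{bmatrix}\mfStrue_t^T\\ \gtrue_t^T\end{bmatrix}\mfRtrue_t^\dagger\begin{bmatrix}\mfStrue_t & \gtrue_t\end{bmatrix}$ equals zero. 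Substituting the Riccati recursions \eqref{eq:generalized_riccati_2} and \eqref{eq:generalized_riccati_4} shows that the $(1,1)$ and $(1,2)$ entries of this Schur complement vanish identically, and the $(2,2)$ entry vanishes by the choice of $\xitrue_t$; conversely, $\Htrue_t\succeq0$ with the rank condition forces the off-diagonal containment, which is \eqref{eq:psd_iff_cond}--\eqref{eq:kernel_containment}, and then the vanishing Schur complement forces $\Ptrue_t,\etatrue_t$ to coincide with the Riccati iterates. The only subtlety here is bookkeeping: the rank condition must be shown equivalent to "off-diagonal blocks in $\im(\mfRtrue_t)$ and Schur complement $=0$" rather than merely "$\preceq0$", which is the classical statement for positive semidefinite bordered matrices (e.g.\ using that for $X\succeq0$, $\begin{bmatrix}X & Y\\ Y^T & Z\end{bmatrix}\succeq0$ iff $\im(Y)\subseteq\im(X)$ and $Z\succeq Y^TX^\dagger Y$, with equality in rank iff $Z=Y^TX^\dagger Y$).

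I expect the main obstacle to be the rigorous justification that a.s.-solvability over the random initial condition $\bar{\mfx}$ (and over all horizons $N=2{:}\nu$) is equivalent to pointwise solvability of the HJBE at every $\chi_t\in\mR^n$, i.e.\ the reduction from the stochastic statement $(1)$ to the deterministic recursion $(4)$. One direction is immediate (pointwise solvability trivially gives a.s.\ solvability and an optimal strategy via \eqref{eq:optimal_ctrl_formula}). The other direction requires: (i) invoking Proposition~\ref{prop:stability_longest_horizon} to reduce to $N=\nu$; (ii) a backward-induction/verification argument showing that if some $V_t$ fails to be bounded below at a single point $\chi^\star$ then, because under Assumption~\ref{ass:persistent_excitation} the state at time $t$ — after the stand-still phase fixes $\mfx$ at $\bar{\mfx}$ until time $\nu-N+1=1$ and then the controllable dynamics \eqref{eq:stochastic_forward_problem_dynamics} spread it out — lands in every neighborhood of every ray direction with positive probability, hence arbitrarily close to $\chi^\star$ scaled up, one can drive the expected cost $J_\nu$ to $-\infty$ on a positive-probability event, contradicting solvability; and (iii) handling the expectation over $\mfw$ inside the cost carefully, since $J_N$ in \eqref{eq:stochastic_forward_problem_cost} already contains an expectation over the process noise and one must commute this with the (deterministic, state-feedback) minimization — this is exactly where the admissible-strategy class "$\mfu_t$ is a function of $\mfx_t$" and the dynamic programming principle do the work. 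I would isolate step (ii)--(iii) as a lemma (likely the one deferred to the appendix) and keep the main text focused on the algebraic equivalences $(2)\Leftrightarrow(3)\Leftrightarrow(4)$ and the value-function induction.
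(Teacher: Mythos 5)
Your overall architecture is sound, and three of the four legs match the paper closely: the verification direction (4)$\Rightarrow$(1) via Bellman's principle, the value-function induction that produces $\mfRtrue_t$, $\mfStrue_t$, $\gtrue_t$ and identifies boundedness of the inner minimization with \eqref{eq:psd_iff_cond}--\eqref{eq:kernel_containment} (together with the pseudo-inverse formula for the minimizers, which is exactly \eqref{eq:optimal_ctrl_formula}), and the generalized-Schur-complement argument for (2)$\Leftrightarrow$(3) are all essentially what the paper does. The gap is in the one direction you yourself flag as the main obstacle, namely (1)$\Rightarrow$(2)/(4), and your proposed fix for it does not work as stated. First, ``every state value is reached with positive probability'' overstates Assumption~\ref{ass:persistent_excitation}: that assumption only guarantees that for each direction $\chi$ there is \emph{some} scale $\rho>0$ at which $\mathscr{B}^n_\epsilon(\rho\chi)$ carries mass, and only for the \emph{initial} state $\bar{\mfx}$, not for the propagated state $\mfx_t$ at a later time $t$ under a yet-to-be-specified policy. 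Second, invoking Proposition~\ref{prop:stability_longest_horizon} to reduce everything to $N=\nu$ is circular: that proposition is proved in the appendix \emph{using} Theorem~\ref{thm:indefinite_LQR} (its proof explicitly follows the ``1)$\implies$2)'' argument of the theorem), so it cannot be a black box inside the theorem's own proof. It is also unnecessary, since $\neg$(1) only requires exhibiting ill-posedness for \emph{some} horizon length $N\in\{2,\dots,\nu\}$ on a positive-probability set of initial conditions.

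The missing idea that makes the paper's argument work without any forward propagation of the distribution is the choice of $N$: take $N$ to be the index of the \emph{first} backward Riccati step at which \eqref{eq:psd_iff_cond} or \eqref{eq:kernel_containment} fails, so that the failing time instant $\nu-N+1$ coincides with the initial time of the horizon-$N$ problem, where the state is exactly $\bar{\mfx}$ and Assumption~\ref{ass:persistent_excitation} applies verbatim. The paper then works on the cost functional $J_N$ directly rather than on the value functions: the telescoping identity \eqref{eq:add_subtract_terms} rewrites $J_N$ as a sum of stage-wise quadratic forms $\tfrac12[\mfu_t^T\ \mfx_t^T\ 1]\Htrue_t[\cdot]$ plus an initial-state term; the stages $t>\nu-N+1$ are zeroed out by the pseudo-inverse feedback (legitimate because the conditions still hold there), and the single failing stage is blown up either by a negative eigenvector of $\mfRtrue_{\nu-N+1}$ (no excitation needed) or, in the kernel-containment case, by pairing the control $\alpha v$ with an initial condition near $\rho\mfStrue_{\nu-N+1}^Tv$ --- note that the specific direction $\mfStrue^Tv$ is needed to make the cross term $\rho\|\mfStrue^Tv\|^2+\gtrue^Tv$ strictly positive, so it does not suffice to land near ``an arbitrary bad point $\chi^\star$ scaled up.'' This sidesteps both the measurable-selection/min--expectation interchange and the question of what the law of $\mfx_t$ looks like under a destabilizing policy, which your sketch would otherwise have to resolve. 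If you restructure step (ii) around this choice of $N$, the rest of your plan goes through.
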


\begin{proof}
First, assume that 4) holds. Note that for an agent with a planning horizon realization $N=\nu$, by the principle of optimality in dynamic programming  (see, e.g., \citep[p.~18]{bertsekas2000dynamic}), we can easily show that 4)$\implies$1) (cf.~ \citep[Prop.~1.3.1]{bertsekas2000dynamic}). For the case $N < \nu$, note that the HJBE is still valid for $t = \nu - N +1:\nu$, and thus the agents behavior is still optimal in $t = \nu - N +1 : \nu$. Since the systems behavior for $t = 1 : \nu - N$  is completely determined by  \eqref{eq:stochastic_forward_problem_before_init_cond}, and \eqref{eq:stochastic_control_stand_still}, by Bellman's principle of optimality, the solution to the HJBE still gives the optimal control. This implies 1), and hence shows that 4)$\implies$1).

Next, we prove that 1)$\implies$2) by proving the contraposition of the statement, i.e., by proving that $\neg$2) $\implies\neg$1). 
Before we proceed, first note the fact that
\begin{align}
&\sum_{t=\nu - N + 1}^{\nu-1}\Big\{\frac{1}{2}\mfx_{t+1}^T \Ptrue_{t+1}\mfx_{t+1} + \etatrue_{t+1}^T\mfx_{t+1}\nonumber\\
&\quad -\frac{1}{2}\mfx_t^T \Ptrue_{t}\mfx_t - \etatrue_{t}^T\mfx_t \Big\} + \frac{1}{2}\mfx_{\nu-N+1}^T \Ptrue_{\nu-N+1} \mfx_{\nu-N+1} \nonumber\\
&\quad+ \etatrue_{\nu-N+1}^T \mfx_{\nu-N+1} - \frac{1}{2}\mfx_{\nu}^T \Ptrue_{\nu} \mfx_{\nu} - \etatrue_{\nu}^T\mfx_{\nu}=0,\label{eq:add_subtract_terms}
\end{align}
for any $N \in \{2,\ldots, \nu\}$.
Taking the expectation with respect to $\mfw_{\nu-N+1:\nu-1}$ on both sides of \eqref{eq:add_subtract_terms}, adding the latter expression to  \eqref{eq:stochastic_forward_problem_cost}, and using \eqref{eq:stochastic_forward_problem_dynamics}, \eqref{eq:generalized_riccati_1}, \eqref{eq:generalized_riccati_3}, and Assumption~\ref{ass:IID},
we can write the objective function as
\begin{align}
&J_N=\mE_{\mfw_{\nu-N+1:\nu-1}} \Big[\frac{1}{2} \mfx_{\nu}^T \underbrace{(\Qtrue - \Ptrue_{\nu})}_{=0}\mfx_{\nu}+\underbrace{(\qtrue-\etatrue_{\nu})^T}_{=0}\mfx_{\nu} \nonumber \\
&+\sum_{t=\nu-N+1}^{\nu-1}\Big\{\frac{1}{2}(A\mfx_t+B\mfu_t+d + \mfw_t)^T\Ptrue_{t+1}\nonumber\\
&\times(A\mfx_t+B\mfu_t+d + \mfw_t)+\etatrue_{t+1}^T(A\mfx_t+B\mfu_t+d + \mfw_t) \nonumber \\
&-\frac{1}{2}\mfx_t^T\Ptrue_t\mfx_t-\etatrue_t^T\mfx_t+\frac{1}{2}\mfx_t^T\Qtrue \mfx_t+\qtrue^T\mfx_t+\frac{1}{2}\mfu_t^T\Rtrue \mfu_t\Big\} \nonumber \\
&+\frac{1}{2}\mfx_{\nu-N+1}^T\Ptrue_{\nu-N+1}\mfx_{\nu-N+1}+\etatrue_{\nu-N+1}^T\mfx_{\nu-N+1} \Big]\nonumber\\
&=\mE_{\mfw_{\nu-N+1:\nu-1}} \Big[\sum_{t=\nu-N+1}^{\nu-1}\Big\{\frac{1}{2}
\begin{bmatrix}
\mfu_t^T &\mfx_t^T&1
\end{bmatrix}
\Htrue_{t}
\begin{bmatrix}
\mfu_t\\\mfx_t\\1
\end{bmatrix} \Big\} \nonumber \\
&+\frac{1}{2}\mfx_{\nu-N+1}^T\Ptrue_{\nu-N+1}\mfx_{\nu-N+1}+\etatrue_{\nu-N+1}^T\mfx_{\nu-N+1}\Big]  \nonumber\\
&+ \!\!\! \underbrace{\sum_{t=\nu-N+1}^{\nu-1} \frac{1}{2}d^T\Ptrue_{t+1}d  + \frac{1}{2}\tr({\Ptrue_{t+1} \Sigma_w}) + \etatrue_{t+1}^Td-\frac{1}{2}\xitrue_{t}}_{=: \; \Upsilon_t}  \label{eq:J_H}
\end{align}
with $\Htrue_t$ in the form of \eqref{eq:nes_suff_existence_solution_2} and $\xitrue_t=\gtrue_t^T\mfRtrue_t^\dagger \gtrue_t$ in $\Htrue_t$.
Note that the term $\Upsilon_t$ in the above equation is constant with respect to the state and the control and hence can be discarded in the optimization problem. On the other hand, since $\{\Ptrue_t\}_{t=1}^{\nu}$ and $\{\etatrue_t\}_{t=1}^{\nu}$ are generated by \eqref{eq:generalized_riccati_iterations}, $\Htrue_t$ can also be written as
\begin{align}\label{eq:H_t_g_t_rewriting}
\Htrue_t = \begin{bmatrix}
\mfRtrue_t &\mfStrue_t &\gtrue_t\\
\mfStrue_t^T &\mfStrue_t^T\mfRtrue_t^\dagger \mfStrue_t &\mfStrue_t^T\mfRtrue_t^\dagger \gtrue_t\\
\gtrue_t^T & \gtrue_t^T\mfRtrue_t^\dagger\mfStrue_t&\gtrue_t^T\mfRtrue_t^\dagger \gtrue_t
\end{bmatrix}.
\end{align}

Now, we use the above trick to prove that $\neg$2)$\implies \neg$1). Suppose \eqref{eq:psd_iff_cond} and \eqref{eq:kernel_containment} cease to hold at the $N$th backward iteration \eqref{eq:generalized_riccati_iterations}, where $N\in\{2,\ldots,\nu\}$.
Namely, $\mfRtrue_{\nu-N+t}\succeq 0$, $\ker(\mfRtrue_{\nu-N+t})\subset \left[\ker(\mfStrue_{\nu-N+t}^T)\cap \ker(\gtrue_{\nu-N+t}^{T})\right]$ still holds for $t=2:N$ but not for $t=1$.
We proceed by showing that this implies that for this planning horizon length realization $N$, \eqref{eq:stochastic_forward_problem} is not bounded from below.
In particular, by \eqref{eq:H_t_g_t_rewriting} and $\ker(\mfRtrue_{\nu-N+t})\subset \left[\ker(\mfStrue_{\nu-N+t}^T)\cap \ker(\gtrue_{\nu-N+t}^{T})\right]$, $\forall t=2:N$, it follows that
\begin{align}
&\Htrue_{\nu-N+t} = \begin{bmatrix}
\mfRtrue_{\nu-N+t} \\\mfStrue_{\nu-N+t}^T \\ \gtrue_{\nu-N+t}^T
\end{bmatrix}
\mfRtrue_{\nu-N+t}^\dagger\nonumber \\
&\qquad \times
\begin{bmatrix}
\mfRtrue_{\nu-N+t} & \mfStrue_{\nu-N+t} &\gtrue_{\nu-N+t}
\end{bmatrix}\succeq 0 \label{eq:H_decomp}
\end{align}
for $t=2:N$.
Hence, in view of \eqref{eq:stochastic_forward_problem_init_cond}, \eqref{eq:stochastic_forward_problem_before_init_cond} and the fact that $\{\mfw_t\}_{t=1}^\infty$ is independent of other random elements from Assumption \ref{ass:IID}, for the given ``initial state" realization $\mfx_{\nu-N+1}=\bar{x}\in\mR^n$ from which the agent starts tracking, the objective function can be written as
\begin{align}
&J_N=\mE_{\mfw_{\nu-N+1:\nu-1}} \Big[\frac{1}{2}
\begin{bmatrix}
\mfu_{\nu-N+1}^T &\mfx_{\nu-N+1}^T &1
\end{bmatrix}\Htrue_{\nu-N+1}
\nonumber\\
&\times \begin{bmatrix}
\mfu_{\nu-N+1} \\\mfx_{\nu-N+1}\\1
\end{bmatrix}+\sum_{t=2}^{N-1}\Big\{\frac{1}{2}
\Big\|(\mfRtrue_{\nu-N+t}^\dagger)^{\frac{1}{2}}\Big(\mfRtrue_{\nu-N+t}\mfu_{\nu-N+t}\nonumber\\
&+\mfStrue_{\nu-N+t}\mfx_{\nu-N+t}+\gtrue_{\nu-N+t}\Big)\Big\|^2\Big\}\nonumber\\
&+ \! \frac{1}{2}\underbrace{\mfx_{\nu-N+1}^T}_{\bar{x}^T} \! \Ptrue_{\nu-N+1} \! \underbrace{\mfx_{\nu-N+1}}_{\bar{x}} + \etatrue_{\nu-N+1}^T\!\underbrace{\mfx_{\nu-N+1}}_{\bar{x}}\Big].
\label{eq:objective_function_time_split}
\end{align}
Notably, it holds that
\begin{align*}
&\frac{1}{2}\bar{x}^T\Ptrue_{\nu-N+1}\bar{x}+\etatrue_{\nu-N+1}^T\bar{x}\le \frac{1}{2}\sigma_{\max}(\Ptrue_{\nu-N+1})\|\bar{x}\|^2\\
&+\|\etatrue_{\nu-N+1}\|\cdot\|\bar{x}\|:=\tau(\bar{x}),
\end{align*}
where $\sigma_{\max}(\cdot)$ is the largest eigenvalue of a matrix.
Also note that, for any $\mfu_{\nu-N+1}$ and $\mfx_{\nu-N+1}=\bar{x}$,  by the dynamics \eqref{eq:stochastic_forward_problem_dynamics} we get an $\mfx_{\nu-N+2}$. Selecting $\mfu_{\nu-N+2} = -\mfRtrue_{\nu-N+2}^\dagger(\mfStrue_{\nu-N+2} \mfx_{\nu-N+2} + \gtrue_{\nu-N+2})+\mathcal{P}_{\nu-N+2}^{\ker(\mfRtrue)}\lambda_{\nu-N+2}$, for some arbitrary $\lambda_{\nu-N+2}\in\mathbb{R}^n$, this would give the next state $\mfx_{\nu-N+3}$.
Recursively selecting the other control signals for $t=\nu-N+3:\nu-1$ accordingly, i.e., as $\mfu_t = -\mfRtrue_t^\dagger(\mfStrue_t \mfx_t + \gtrue_t)+\mathcal{P}_t^{\ker(\mfRtrue)}\lambda_t$, for some arbitrary $\lambda_t\in\mathbb{R}^n$,  then all terms in the summation in \eqref{eq:objective_function_time_split} would become zero.
Therefore, for the objective function $J_N$ it holds that
\begin{align}
&J_N\le \mE_{\mfw_{\nu-N+1:\nu-1}} \Big[\frac{1}{2}\mfu_{\nu-N+1}^T\mfRtrue_{\nu-N+1}\mfu_{\nu-N+1} \nonumber\\
&+\bar{x}^T\mfStrue_{\nu-N+1}^T\mfu_{\nu-N+1}+\gtrue_{\nu-N+1}^T\mfu_{\nu-N+1}\Big] \nonumber\\
&+\bar{x}^T\mfStrue_{\nu-N+1}^T\mfRtrue_{\nu-N+1}^\dagger\mfStrue_{\nu-N+1}\bar{x} \nonumber\\
&+\gtrue_{\nu-N+1}^T\mfRtrue_{\nu-N+1}^\dagger \mfStrue_{\nu-N+1}\bar{x}+\tau(\bar{x}). \label{eq:JN_inequality}
\end{align}
If $\mfRtrue_{\nu-N+1}\not\succeq 0$, it is clear that we can choose $\mfu_{\nu-N+1}=\alpha v_-(\mfRtrue_{\nu-N+1})$, where $v_-(\mfRtrue_{\nu-N+1})$ is an eigenvector that corresponds to a negative eigenvalue of $\mfRtrue_{\nu-N+1}$. In this case, since such choice of $\mfu_{\nu-N+1}$ does not depend on the random vectors $\mfw_{\nu-N+1:\nu-1}$, the expectation would be marginalized out. Letting $\alpha\rightarrow +\infty$ would make $J_N$ tend to minus infinity.
Thus, unless \eqref{eq:psd_iff_cond} holds, irrespective of the initial condition $\bar{x}$ there is a planning horizon length realization $N \in \{2,\ldots, \nu\}$ such that the cost function is not bounded from below.

On the other hand, if  \eqref{eq:psd_iff_cond} holds but $\ker(\mfRtrue_{\nu-N+1})\not\subset \left[\ker(\mfStrue_{\nu-N+1}^T)\cap \ker(\gtrue_{\nu-N+1}^T)\right]$, then there exists a vector $v\in\mR^m$ with norm one, such that $\mfRtrue_{\nu-N+1} v=0$, and such that $\mfStrue_{\nu-N+1}^T v\neq 0$ or $\gtrue_{\nu-N+1}^T v\neq 0$.
Without loss of generality, assume that $\gtrue_{\nu-N+1}^Tv \geq 0$; otherwise we instead consider $-v$.
By Assumption \ref{ass:persistent_excitation}, it follows that we can find $\rho>0$ such that $\mP\left(\bar{\mfx}\in\mathscr{B}^{n}_\epsilon(\rho\mfS_{\nu-N+1}^Tv)\right)>0,\forall \epsilon>0$.
Now, consider $\mfu_{\nu-N+1}=\alpha v$ and $\mfx_{\nu-N+1}=\bar{x}=\rho\mfStrue_{\nu-N+1}^T v + \tilde{v}$, where $\tilde{v}\in \mathscr{B}^{n}_{\epsilon_1}(0)$ and where $\epsilon_1 > 0$ will be determined shortly. 
Note that with such choice, the expectation in \eqref{eq:JN_inequality} would be again marginalized out. Then it holds for the objective function that
\begin{align*}
&J_N\le \alpha[\underbrace{\rho v^T\mfStrue_{\nu-N+1}\mfStrue_{\nu-N+1}^T v+\gtrue_{\nu-N+1}^T v}_{= \rho\| \mfStrue_{\nu-N+1}^Tv \|^2 + \gtrue_{\nu-N+1}^Tv \; > \; 0}+\tilde{v}^T\mfStrue_{\nu-N+1}^T v]\\
&+(\rho v^T\mfStrue_{\nu-N+1}+\tilde{v}^T)\mfStrue_{\nu-N+1}^T\mfRtrue_{\nu-N+1}^\dagger\mfStrue_{\nu-N+1}\\
&\times (\rho \mfStrue_{\nu-N+1}^T v+\tilde{v}) + \gtrue_{\nu-N+1}^T\mfRtrue_{\nu-N+1}^\dagger \mfStrue_{\nu-N+1}\\
&\times  (\rho \mfStrue_{\nu-N+1}^T v+\tilde{v}) + \tau(\rho\mfStrue_{\nu-N+1}^T v+\tilde{v}).
\end{align*}
In particular, we can always make $\epsilon_1 >0$ small enough so that $\rho\| \mfStrue_{\nu-N+1}^Tv \|^2 + \gtrue_{\nu-N+1}^Tv+\tilde{v}^T\mfStrue_{\nu-N+1}^T v>0$, $\forall \tilde{v} \in  \mathscr{B}^{n}_{\epsilon_1}(0)$.
By taking $\alpha \to - \infty$, the upper bound goes to minus infinity and hence pushes $J_N$ to minus infinity. 
Recalling that $\mP(\bar{\mfx}\in\mathscr{B}^{n}_\epsilon(\rho\mfS_{\nu-N+1}^Tv))>0,\forall \epsilon\in(0,\epsilon_1)$, this shows that there exists a set of initial value realizations $\bar{x}$ with non-zero probability such that the forward problem is ill-posed.
This proves that $\neg$2)$\implies \neg$1), and thus that 1)$\implies$2).

Next, we prove that 2)$\implies$4). 
We make an ansatz that the solution to the HJBE \eqref{eq:stochastic_dyn_prog} is $V_t(\chi_t)=\frac{1}{2}\chi_t^T\Ptrue_t\chi_t+\etatrue_t^T\chi_t+\gammatrue_t$, where $\Ptrue_{1:\nu}$, $\etatrue_{1:\nu}$ and $\gammatrue_{1:\nu}$ are generated by \eqref{eq:generalized_riccati_iterations} and \eqref{eq:stochastic_riccati_iteration_constant_term}, respectively. 
The ansatz fulfils \eqref{eq:stochastic_dyn_prog_terminal} and \eqref{eq:stochastic_riccati_iteration_constant_term}.
Plugging the ansatz into \eqref{eq:stochastic_dyn_prog_iteration}, we have that
\begin{align*}
&\frac{1}{2}\chi_t^T\Ptrue_t\chi_t+\etatrue_t^T\chi_t+\gammatrue_t =\min_{\mu_t}\Big\{\frac{1}{2}\chi_t^T\Qtrue \chi_t+\qtrue^T\chi_t \nonumber\\
&+\frac{1}{2}\mu_t^T\Rtrue \mu_t+\mE_{\mfw_t}\Big[\frac{1}{2}(A\chi_t+B\mu_t+d+\mfw_t)^T\Ptrue_{t+1}\nonumber\\
&\times (A\chi_t+B\mu_t+d+\mfw_t)+\etatrue_{t+1}^T(A\chi_t+B\mu_t+d+\mfw_t),\nonumber\\ 
&+\gammatrue_{t+1}\Big]\Big\},\quad t=1:\nu-1.
\end{align*}
By Assumption \ref{ass:IID}, we can expand the expectation regarding $\mfw_t$ on the right hand side of the above equation, and removing constant terms that are irrelevant to the optimization we get
\begin{align}
&\min_{\mu_t}\Big\{\frac{1}{2}\mu_t^T\mfRtrue_t\mu_t+(\mfStrue_t\chi_t+\gtrue_t)^T\mu_t+(A\chi_t+d)^T\Ptrue_{t+1}\nonumber\\
&\times \! (A\chi_t \! + \! d) \! + \! \eta_{t+1}^T(A\chi_t \! + \! d) \!+ \! \frac{1}{2}\chi_t^T\Qtrue \chi_t \! + \! \qtrue^T\chi_t\Big\}.\label{eq:plug_in_HJBE}
\end{align}
\eqref{eq:plug_in_HJBE} is an unconstrained quadratic optimization problem with respect to $\mu_t$. 
By \eqref{eq:psd_iff_cond} we know that it is a convex problem, and hence it has an optimal solution if and only if the gradient is zero in some point (see, e.g., \citep[Prop.~1.1.2]{bertsekas1999nonlinear})
To verify that it has a solution, and to work out the optimal control, we take the derivative of \eqref{eq:plug_in_HJBE} with respect to $\mu_t$ and equate it to zero, which gives
\[
\mfRtrue_t \mu_t = -\mfStrue_t \chi_t -\gtrue_t, \quad t=1:\nu-1.
\]
Since \eqref{eq:kernel_containment} holds, we have that $\mfStrue_t \chi_t +\gtrue_t \in \im(\mfStrue_t) \oplus \im(\gtrue_t) =   \Big[\ker(\mfStrue_t^T)\cap \ker(\gtrue_t^T)\Big]^\perp \subset \ker(\mfRtrue_t)^\perp = \im(\mfRtrue_t)$ (see, e.g., \citep[Prop.~2.3]{engl2000regularization} for the last equality), where $\oplus$ denotes the direct sum of the subspaces.
Hence, the above equation has a solution, and the control signal takes the form of 
\begin{align}
\mu_t = -\mfRtrue_t^\dagger(\mfStrue_t \chi_t +\gtrue_t)+\mathcal{P}^{\ker(\mfRtrue)}_t \lambda_t, \forall\lambda_t\in\mathbb{R}^n, t=1:\nu-1,
\label{eq:opt_ctrl}
\end{align}
which minimizes the right hand side of \eqref{eq:plug_in_HJBE}.
Plug \eqref{eq:opt_ctrl} into \eqref{eq:plug_in_HJBE}, use the property of \eqref{eq:kernel_containment} and in view of \eqref{eq:generalized_riccati_iterations}, \eqref{eq:stochastic_riccati_iteration_constant_term}, the quadratic, first order and constant terms regarding $\chi_t$ equates between the left and right hand sides. Thus the ansatz is indeed a solution to the HJBE.

So far, we have shown 4)$\implies$1)$\implies$2)$\implies$4), i.e., the equivalence among 1), 2) and 4). To complete the proof, we now show the equivalence between 2) and 3).

First, we prove 2)$\implies$ 3), and start with noting that if \eqref{eq:generalized_riccati_iterations} holds, then \eqref{eq:nes_suff_existence_solution_1} holds trivially.
Next, with $\xitrue_t=\gtrue_t^T\mfRtrue_t^\dagger \gtrue_t$, we know from the above argument that due to the kernel containment \eqref{eq:kernel_containment}, $\Htrue_t$ can be expressed as
\begin{align*}
&\Htrue_t = \begin{bmatrix}
\mfRtrue_t &\mfStrue_t &\gtrue_t\\
\mfStrue_t^T &\mfStrue_t^T\mfRtrue_t^\dagger \mfStrue_t &\mfStrue_t^T\mfRtrue_t^\dagger \gtrue_t\\
\gtrue_t^T & \gtrue_t^T\mfRtrue_t^\dagger\mfStrue_t&\gtrue_t^T\mfRtrue_t^\dagger \gtrue_t
\end{bmatrix}\\
&=\begin{bmatrix}
\mfRtrue_t \\\mfStrue_t^T \\\gtrue_t^T
\end{bmatrix}
\mfRtrue_t^\dagger
\begin{bmatrix}
\mfRtrue_t& \mfStrue_t&\gtrue_t
\end{bmatrix},\quad t=1:\nu-1.
\end{align*}
By \eqref{eq:psd_iff_cond}, $\mfRtrue_t^\dagger\succeq 0$ and hence $\Htrue_t\succeq 0$, i.e., \eqref{eq:nes_suff_existence_solution_2} holds.
On the other hand, by the rank property of Schur complement \citep[p.~43]{horn2005basic}, it holds that
\[
\rank(\Htrue_t) =\rank(\mfRtrue_t)+\rank(\Htrue_t \backslash \mfRtrue_t ).
\]
Now, observe that
\begin{align*}
&\Htrue_t \backslash \mfRtrue_t  = 
 \begin{bmatrix}
\mfStrue_t^T\mfRtrue_t^\dagger \mfStrue_t &\mfStrue_t^T\mfRtrue_t^\dagger \gtrue_t\\
\gtrue_t^T\mfRtrue_t^\dagger\mfStrue_t&\gtrue_t^T\mfRtrue_t^\dagger \gtrue_t
\end{bmatrix}
-
\begin{bmatrix}
\mfStrue_t^T\\ \gtrue_t^T
\end{bmatrix}
\mfRtrue_t^\dagger
\begin{bmatrix}
\mfStrue_t &\gtrue_t
\end{bmatrix}
\\
 &= \begin{bmatrix}
\mfStrue_t^T\mfRtrue_t^\dagger \mfStrue_t &\mfStrue_t^T\mfRtrue_t^\dagger \gtrue_t\\
\gtrue_t^T\mfRtrue_t^\dagger\mfStrue_t&\gtrue_t^T\mfRtrue_t^\dagger \gtrue_t
\end{bmatrix}
-
 \begin{bmatrix}
\mfStrue_t^T\mfRtrue_t^\dagger\mfStrue_t &\mfStrue_t^T\mfRtrue_t^\dagger \gtrue_t\\
\gtrue_t^T\mfRtrue_t^\dagger\mfStrue_t &\gtrue_t^T\mfRtrue_t^\dagger \gtrue_t
\end{bmatrix} = 0,
\end{align*}
and hence $\rank(\Htrue_t) =\rank(\mfRtrue_t)$, i.e., \eqref{eq:nes_suff_existence_solution_3} holds.

Now we prove 3)$\implies$2).
If \eqref{eq:nes_suff_existence_solution} holds, it follows that $\mfRtrue_t\succeq 0$, i.e., \eqref{eq:psd_iff_cond} holds.
By properties of the generalized Schur complement \citep[Thm.~1.20 and p.~43]{horn2005basic}, it follows that
\begin{align*}
&(I-\mfRtrue_t\mfRtrue_t^\dagger)
\begin{bmatrix}
\mfStrue_t &\gtrue_t
\end{bmatrix}=0, \implies\eqref{eq:kernel_containment},\\
&\Htrue_t\backslash \mfRtrue_t:=\begin{bmatrix}
A^T\Ptrue_{t+1}A+\Qtrue-\Ptrue_t &\betatrue_t\\
\beta_t &\xitrue_t
\end{bmatrix}
-\begin{bmatrix}
\mfStrue_t^T\\ \gtrue_t^T
\end{bmatrix}
\mfRtrue_t^\dagger
\begin{bmatrix}
\mfStrue_t &\gtrue_t
\end{bmatrix}\\
&=\begin{bmatrix}
A^T\Ptrue_{t+1}A+\Qtrue-\Ptrue_t &\betatrue_t\\
\betatrue_t &\xitrue_t
\end{bmatrix}-
\begin{bmatrix}
\mfStrue_t^T\mfRtrue_t^\dagger\mfStrue_t &\mfStrue_t^T\mfRtrue_t^\dagger \gtrue_t\\
\gtrue_t^T\mfRtrue_t^\dagger\mfStrue_t &\gtrue_t^T\mfRtrue_t^\dagger \gtrue_t
\end{bmatrix}\succeq 0,\\
&\rank(\Htrue_t)=\rank(\mfRtrue_t)+\rank(\Htrue_t\backslash \mfRtrue_t).
\end{align*}
Since $\rank(\Htrue_t)=\rank(\mfRtrue_t)$, we must have that $\Htrue_t\backslash \mfRtrue_t=0$, and hence \eqref{eq:generalized_riccati_iterations} follows.
This finishes the proof of the entire theorem.
\end{proof}

Before we continue, we make a few remarks related to Theorem~\ref{thm:indefinite_LQR} and the proof.

\begin{remark}\label{rem:rank_psd_and_riccati}
From the above proof, it is interesting to note that in the equivalence between point 2) and 3) in Theorem~\ref{thm:indefinite_LQR}, we have that \eqref{eq:nes_suff_existence_solution_1} and \eqref{eq:nes_suff_existence_solution_3} are equivalent to that the Riccati recursions in \eqref{eq:generalized_riccati_iterations} are satisfied, and that  \eqref{eq:nes_suff_existence_solution_2}, i.e., that $\Htrue_t \succeq 0$ holds, is equivalent to \eqref{eq:psd_iff_cond} and \eqref{eq:kernel_containment}.
\end{remark}

\begin{remark}\label{rem:state_space_expansion}
By extending the state space to $\tilde{\mfx}_t = [\mfx_t^T, 1]^T$ and with state space matrices given by
\begin{subequations}\label{eq:extended_state_space}
\begin{equation}\label{eq:extended_state_space_mtx}
\tilde{A} = \begin{bmatrix}
A &  d \\  \zero^T & 1 
\end{bmatrix}, \quad
\tilde{B} = \begin{bmatrix} B \\ 0 \end{bmatrix},
\end{equation}
and cost matrices given by
\begin{equation}\label{eq:extended_cost_mtx}
\tilde{\Qtrue} = \begin{bmatrix}
\Qtrue & \qtrue\\  
\qtrue^T & 0
\end{bmatrix}, \quad \tilde{\Rtrue} = \Rtrue,
\end{equation}
\end{subequations}
we can indeed rewrite the forward problem \eqref{eq:stochastic_forward_problem} as a ``standard" LQ problem (albeit possibly indefinite):
\begin{subequations}\label{eq:forward_problem_tilde}
  \begin{align}
   \min_{\substack{\mfx_{1:\nu} , \; \mfu_{1:\nu}}}& \; J_N := \mE_{\mfw_{\nu-N+1:\nu-1}} \Big[\frac{1}{2} \tilde{\mfx}_{\nu}^{T} \tilde{\Qtrue} \tilde{\mfx}_{\nu} \nonumber\\
   &+\sum_{t = \nu - N+1}^{\nu-1} [\frac{1}{2} \tilde{\mfx}_{t}^{T} \tilde{\Qtrue} \tilde{\mfx}_{t} + \frac{1}{2}\mfu_{t}^T\tilde{\Rtrue}\mfu_t] \Big] \label{eq:forward_problem_tilde_cost} \\
  \text{s.t.}
  & \;\; \tilde{\mfx}_{t+1} = \tilde{A}\tilde{\mfx}_{t} + \tilde{B}\mfu_{t} + \tilde{\mfw}_t, \nonumber \\
  &\;\;\;\; t = \nu-N+1, \ldots, \nu-1, \label{eq:forward_problem_tilde_dynamics} \\
  & \;\; \tilde{\mfx}_{t+1} = \tilde{\mfx}_t, \; t=1,\ldots,\nu-N\\
  & \;\; \tilde{\mfx}_{1} = \bar{\tilde{x}}, \label{eq:forward_problem_tilde_init_cond}\\
  & \;\; \mfu_1 = \cdots = \mfu_{\nu-N}=0,\label{eq:forward_problem_tilde_control_stand_still}
  \end{align}
\end{subequations}
with initial value $\bar{\tilde{x}} = [\bar{x}^{T}, 1]^T$, and where $\tilde{\mfw}_t = [\mfw_t^T, 0]^T$.
However, note that \eqref{eq:forward_problem_tilde} is not a classic LQR in the sense that $(\tilde{A}, \tilde{B})$ is not controllable. Moreover, $\bar{\tilde{\mfx}}$ does not satisfy Assumption~\ref{ass:persistent_excitation}. Furthermore, extending \eqref{eq:forward_problem_tilde} to time-varying $d_t$ or the tracking problem, where $q_t = Qx_t^r$ and $x_t^r$ is the reference signal, is not possible without specifying the problem structure \eqref{eq:extended_state_space}. Therefore, existing IOC results cannot be directly applied to \eqref{eq:forward_problem_tilde}.
\end{remark}

\subsection{Analysis of the closed-loop system matrices}

In view of \eqref{eq:optimal_ctrl_formula}, it seems like there might be infinitely many choices of control signals that are optimal. However, as we shall see, under the assumptions we impose the optimal control signal is \emph{unique} for the considered forward problem \eqref{eq:stochastic_forward_problem}.
\begin{proposition}\label{prop:mfR_pd}
Let $\Rtrue \succ 0$. Under Assumptions~\ref{ass:controlability_and_full_rank}, \ref{ass:IID} and \ref{ass:persistent_excitation}, $(\Qtrue, \qtrue) \in \mscrF(\Rtrue)$ is equivalent to
$\mfRtrue_t \succ 0$, $t=1:\nu-1$.
\end{proposition}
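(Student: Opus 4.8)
The plan is to prove the two implications separately, in both cases reducing to the equivalence of statements 1) and 2) in Theorem~\ref{thm:indefinite_LQR}. Note that Assumptions~\ref{ass:IID} and \ref{ass:persistent_excitation} are in force, so that theorem applies; also note that the Riccati iterates $\Ptrue_{1:\nu}$ and $\etatrue_{1:\nu}$ in \eqref{eq:generalized_riccati_iterations} are always well defined, since they only involve pseudo-inverses, so $\mfRtrue_t = B^T\Ptrue_{t+1}B + \Rtrue$ and $\mfStrue_t = B^T\Ptrue_{t+1}A$ are well defined for every $t = 1:\nu-1$.

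For the implication ``$\mfRtrue_t \succ 0$ for all $t=1:\nu-1$ $\;\Rightarrow\; (\Qtrue,\qtrue)\in\mscrF(\Rtrue)$'', I would simply observe that $\mfRtrue_t \succ 0$ immediately gives \eqref{eq:psd_iff_cond}, and since then $\ker(\mfRtrue_t) = \{0\}$, the kernel containment \eqref{eq:kernel_containment} holds vacuously. Hence statement 2) of Theorem~\ref{thm:indefinite_LQR} is satisfied, and the theorem yields $(\Qtrue,\qtrue)\in\mscrF(\Rtrue)$. (For this direction only $\Rtrue \succeq 0$ is actually needed beyond the standing assumptions; controllability of $(A,B)$ and full column rank of $B$ are not used.)

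For the converse, suppose $(\Qtrue,\qtrue)\in\mscrF(\Rtrue)$. By Theorem~\ref{thm:indefinite_LQR} (1)$\Rightarrow$2), for every $t = 1:\nu-1$ we have $\mfRtrue_t \succeq 0$ and $\ker(\mfRtrue_t) \subset \ker(\mfStrue_t^T)$; we only need the $\mfStrue_t^T$-part of \eqref{eq:kernel_containment}. Fix $t$ and take $v \in \ker(\mfRtrue_t)$. Then $\mfStrue_t^T v = A^T \Ptrue_{t+1} B v = 0$, and since $A$ is invertible by Assumption~\ref{ass:controlability_and_full_rank}, this forces $\Ptrue_{t+1} B v = 0$, hence $B^T\Ptrue_{t+1}B v = 0$. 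Combining with $\mfRtrue_t v = 0$ and $\mfRtrue_t = B^T\Ptrue_{t+1}B + \Rtrue$ gives $\Rtrue v = 0$, so $v = 0$ because $\Rtrue \succ 0$. Thus $\ker(\mfRtrue_t) = \{0\}$, which together with $\mfRtrue_t \succeq 0$ gives $\mfRtrue_t \succ 0$.

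There is no real obstacle here: the argument is short because Theorem~\ref{thm:indefinite_LQR} already supplies the kernel-containment condition, which is exactly what couples $\ker(\mfRtrue_t)$ to $\ker(A^T\Ptrue_{t+1}B)$, after which invertibility of $A$ and positive definiteness of $\Rtrue$ finish the job. The only point worth flagging is that, despite the statement invoking Assumption~\ref{ass:controlability_and_full_rank}, only the invertibility of $A$ is actually used — neither controllability of $(A,B)$ nor full column rank of $B$ enters the proof.
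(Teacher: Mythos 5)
Your proof is correct and follows essentially the same route as the paper: the easy direction via condition 2) of Theorem~\ref{thm:indefinite_LQR}, and the converse by combining the kernel containment $\ker(\mfRtrue_t)\subset\ker(\mfStrue_t^T)$ with the invertibility of $A$ and $\Rtrue\succ 0$. The only (immaterial) difference is that you deduce $\Rtrue v=0$ directly from $\Ptrue_{t+1}Bv=0$, whereas the paper rewrites $\mfRtrue_t=\mfStrue_tA^{-1}B+\Rtrue$ and concludes $v^T\Rtrue v=0$; your side remark that only the invertibility of $A$ from Assumption~\ref{ass:controlability_and_full_rank} is actually used is also accurate.
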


\begin{proof}
The implication ``$\Longleftarrow$'' follows from Theorem~\ref{thm:indefinite_LQR}. To prove the implications ``$\Longrightarrow$'':
by Theorem~\ref{thm:indefinite_LQR}, since $(\Qtrue,\qtrue)\in\mathscr{F}(\Rtrue)$, there exist matrices and vectors that fulfil \eqref{eq:generalized_riccati_iterations} and \eqref{eq:psd_kernel_containment_cond}. In particular, since $A$ is invertible, by \eqref{eq:mfS_true} and \eqref{eq:mfR_def} we have that
\[
\mfRtrue_t = B^T \Ptrue_{t+1}B + \Rtrue = B^T \Ptrue_{t+1}AA^{-1}B + \Rtrue = \mfStrue_tA^{-1}B + \Rtrue,
\]
for $t=1:\nu-1$.
Now, by  \eqref{eq:kernel_containment} we have that $\ker(\mfRtrue_t) \subset \ker(\mfStrue_t^T)$ holds for $t=1:\nu-1$. In particular, this means that
\[
z \in \ker(\mfRtrue_t) \; \Longrightarrow \;
\begin{cases}
\mfRtrue_t z = 0\\
\mfStrue_t^T z = 0
\end{cases}
\!\! \Longrightarrow \;
\begin{cases}
-\mfStrue_tA^{-1}B z = \Rtrue z \\
z^T \mfStrue_t = 0.
\end{cases}
\]
This in turn means that for any $z \in \ker(\mfRtrue_t) $,
\[
z^T \Rtrue z = - z^T \mfStrue_t A^{-1}B z = 0,
\]
and since $\Rtrue \succ 0$ this means that $z = 0$. Therefore, the only vector in $\ker(\mfRtrue_t)$ is the zero-vector, and since $\mfRtrue_t$ is positive semi-definite (see \eqref{eq:psd_iff_cond}), this implies that $\mfRtrue_t$ is in fact strictly positive definite for $t=1:\nu-1$.
\end{proof}

\begin{corollary}\label{corr:no_nullspace_component}
Under the assumptions in Proposition~\ref{prop:mfR_pd}, the optimal control signal for problem \eqref{eq:stochastic_forward_problem} takes the form
\begin{subequations}\label{eq:optimal_ctrl_formula_no_kernel}
\begin{align}
\mfu_t & = -\mfRtrue_t^{-1}(\mfStrue_t \mfx_t +\gtrue_t), \; t=\nu-N+1:\nu-1 \\
\mfu_t &= 0, \quad t=1:\nu-N.
\end{align}
\end{subequations}
\end{corollary}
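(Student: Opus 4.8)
The plan is to read off the result directly from Proposition~\ref{prop:mfR_pd} together with the optimal-control characterization \eqref{eq:optimal_ctrl_formula} in Theorem~\ref{thm:indefinite_LQR}. Since $(\Qtrue,\qtrue)\in\mscrF(\Rtrue)$ and $\Rtrue\succ 0$, Proposition~\ref{prop:mfR_pd} gives $\mfRtrue_t\succ 0$ for all $t=1:\nu-1$. In particular each $\mfRtrue_t$ is invertible, so the Moore--Penrose pseudo-inverse coincides with the ordinary inverse, $\mfRtrue_t^\dagger=\mfRtrue_t^{-1}$, and $\ker(\mfRtrue_t)=\{0\}$.

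Next I would evaluate the projection operator that appears in \eqref{eq:optimal_ctrl_formula}. Using $\mathcal{P}^{\ker(\mfRtrue)}_t = I-\mfRtrue_t^\dagger\mfRtrue_t$ (which is the orthogonal projection onto $\ker(\mfRtrue_t)$, see \citep[Prop.~2.3]{engl2000regularization}) and invertibility of $\mfRtrue_t$, we get $\mathcal{P}^{\ker(\mfRtrue)}_t = I-\mfRtrue_t^{-1}\mfRtrue_t = 0$ for $t=1:\nu-1$. Hence the term $\mathcal{P}^{\ker(\mfRtrue)}_t\lambda_t$ vanishes for every $\lambda_t\in\mR^n$, and substituting $\mfRtrue_t^\dagger=\mfRtrue_t^{-1}$ into \eqref{eq:optimal_ctrl_formula} yields exactly \eqref{eq:optimal_ctrl_formula_no_kernel}: on $t=\nu-N+1:\nu-1$ the optimal control is $\mfu_t=-\mfRtrue_t^{-1}(\mfStrue_t\mfx_t+\gtrue_t)$, while $\mfu_t=0$ on $t=1:\nu-N$ is unchanged. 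Moreover, since the only degree of freedom left in \eqref{eq:optimal_ctrl_formula} was the arbitrary $\lambda_t$ and it no longer affects $\mfu_t$, this simultaneously establishes that the optimal control signal is unique, as anticipated in the discussion preceding the statement.

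There is essentially no obstacle: the corollary is an immediate consequence of Proposition~\ref{prop:mfR_pd}, and the only points requiring care are to cite the identity $\mathcal{P}^{\ker(\mfRtrue)}_t=I-\mfRtrue_t^\dagger\mfRtrue_t$ for the kernel projection and to note that pseudo-inverse equals inverse for the now strictly positive definite $\mfRtrue_t$.
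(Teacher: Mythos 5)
Your proposal is correct and matches the paper's (implicit) reasoning exactly: the paper states this corollary without proof precisely because, once Proposition~\ref{prop:mfR_pd} gives $\mfRtrue_t\succ 0$, the pseudo-inverse in \eqref{eq:optimal_ctrl_formula} becomes the ordinary inverse and the projection $\mathcal{P}^{\ker(\mfRtrue)}_t=I-\mfRtrue_t^{-1}\mfRtrue_t$ vanishes, eliminating the $\lambda_t$ freedom. Nothing is missing.
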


\begin{remark}
Even if $\Rtrue$ is not strictly positive definite, by the proof of Proposition~\ref{prop:mfR_pd} we can still partially characterize $\ker(\mfRtrue_t)$. In particular, $z \in \ker(\mfRtrue_t)$ implies that $z^T \Rtrue z = 0$. If $\Rtrue$ is full rank, this is the neutral subspace in the indefinite inner product space defined by $\Rtrue$; see, e.g., \citep[Chp.~2]{gohberg2005indefinite}.
\end{remark}

By Corollary~\ref{corr:no_nullspace_component}, under the conditions in Proposition~\ref{prop:mfR_pd} there is a unique solution to the forward optimal control problem \eqref{eq:stochastic_forward_problem}.
In this case, the system's behavior (after it starts applying control) is determined by $\tilde{\mfx}_{t+1} = \tilde{A}_{cl}(t;\Qtrue,\qtrue) \tilde{\mfx}_t + \tilde{\mfw}_t$, for $t = \nu - N + 1 : \nu$, where  $\tilde{A}_{cl}(t;\Qtrue,\qtrue)$ is the closed-loop system matrix at time $t$ for the extended state-space model (see Remark~\ref{rem:state_space_expansion}).
In particular,
\begin{equation}\label{eq:tildeAcl}
\tilde{A}_{cl}(t;\Qtrue,\qtrue) = 
\begin{bmatrix}
A - B\mfRtrue_t^{-1} \mfStrue_t & d - B\mfRtrue_t^{-1} \gtrue_t  \\
\mathbf{0}^T & 1
\end{bmatrix},
\end{equation}
with $\mfRtrue_t$, $\mfStrue_t$, and $\gtrue_t$ as in \eqref{eq:psd_kernel_containment_cond}, and hence it implicitly depends on $(\Qtrue,\qtrue)$ in the objective function \eqref{eq:stochastic_forward_problem_cost}.
This means that the (conditional) distribution of the agent's optimal trajectory $\mP(\mfx_{1:\nu}\mid \mfN=N,\bar{\mfx}=\bar{x})$ and optimal control $\mP(\mfu_{1:\nu-1} \mid \mfN=N,\bar{\mfx}=\bar{x})$ are implicitly given by solving \eqref{eq:stochastic_forward_problem}.
Moreover, under mild regularity conditions on the probability distribution of $(\bar{\mfx}, \mfN)$, the formulation in \eqref{eq:stochastic_forward_problem} then defines joint probability distributions for $(\mfx_{1:\nu},\mfN,\bar{\mfx})$ and $(\mfu_{1:\nu-1},\mfN,\bar{\mfx})$ (cf.~\citep[Prop.~1.15]{shao1999mathematical} or \citep[Thm.~5.3]{kallenberg1997foundations}).
Before we continue analyzing the identifiability, we present the following corollary that is useful in the analysis to come.

\begin{corollary}\label{cor:Acl_full_rank}
Under the assumptions in Proposition~\ref{prop:mfR_pd}, 
given any $R\succ 0$, for any $(Q, q) \in \mscrF(R)$, let $\{ P_t \}_ {t=1}^{\nu}$ be the solution to \eqref{eq:generalized_riccati_iterations} that corresponds to $Q$.
Accordingly, let $\mfR_t$, $\mfS_t$ and $g_t$ be defined as in Theorem~\ref{thm:indefinite_LQR}, for $t = 1:\nu-1$.
Then the matrix
\[
\Acl(t;Q) := A - B \mfR_t^{-1}\mfS_t,
\]
as well as the matrix $\tilde{A}_{cl}(t; Q, q)$ in \eqref{eq:tildeAcl}, are invertible for all $t = 1:\nu-1$.
\end{corollary}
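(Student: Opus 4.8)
The plan is to reduce the claim for $\Acl(t;Q)$ to the invertibility of a small perturbation of the identity, and then to obtain the claim for $\tilde{A}_{cl}(t;Q,q)$ for free from its block structure. Since $(Q,q)\in\mscrF(R)$ and $R\succ 0$, Proposition~\ref{prop:mfR_pd} guarantees $\mfR_t = B^T P_{t+1}B + R \succ 0$, so $\mfR_t^{-1}$ exists. Using $\mfS_t = B^T P_{t+1}A$, I would factor
\[
\Acl(t;Q) = A - B\mfR_t^{-1}\mfS_t = \bigl(I - B\mfR_t^{-1}B^T P_{t+1}\bigr)A ,
\]
and since $A$ is invertible by Assumption~\ref{ass:controlability_and_full_rank}, it suffices to prove that the $n\times n$ matrix $I - B\mfR_t^{-1}B^T P_{t+1}$ is invertible.

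Next I would establish injectivity of $I - B\mfR_t^{-1}B^T P_{t+1}$. Suppose $\bigl(I - B\mfR_t^{-1}B^T P_{t+1}\bigr)x = 0$ and set $y := \mfR_t^{-1}B^T P_{t+1}x$, so that $x = By$. Substituting back gives $y = \mfR_t^{-1}B^T P_{t+1}B y$, i.e.\ $\mfR_t y = B^T P_{t+1}B y$; recalling $\mfR_t = B^T P_{t+1}B + R$, this collapses to $Ry = 0$. Since $R\succ 0$ this forces $y = 0$, hence $x = By = 0$. Thus $I - B\mfR_t^{-1}B^T P_{t+1}$ is injective, and being square it is invertible; consequently $\Acl(t;Q)$ is invertible for every $t = 1:\nu-1$.

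Finally, for $\tilde{A}_{cl}(t;Q,q)$ in \eqref{eq:tildeAcl}: it is block upper-triangular with diagonal blocks $\Acl(t;Q)$ (its $(1,1)$ block is exactly $A - B\mfR_t^{-1}\mfS_t$) and the scalar $1$, so $\det\tilde{A}_{cl}(t;Q,q) = \det\bigl(\Acl(t;Q)\bigr)\neq 0$, which gives its invertibility. I do not expect a genuine obstacle; the only point needing care is that one cannot invert $P_{t+1}$ — it may be singular, or even indefinite, in this general setting — so the usual matrix-inversion-lemma rewriting of the Riccati map is unavailable. The substitution $y := \mfR_t^{-1}B^T P_{t+1}x$ sidesteps this entirely and reduces the whole claim to positive definiteness of $R$.
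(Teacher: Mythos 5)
Your proof is correct and follows the same route as the paper: the paper handles $\Acl(t;Q)$ by appealing to the argument of \citep[Thm.~2.1]{zhang2019inverse}, which rests on exactly the two facts you use ($R \succ 0$ and $\mfR_t \succ 0$ from Proposition~\ref{prop:mfR_pd}), and it handles $\tilde{A}_{cl}(t;Q,q)$ via the identical block upper-triangular observation. Your factorization $\Acl(t;Q) = (I - B\mfR_t^{-1}B^T P_{t+1})A$ together with the substitution $y = \mfR_t^{-1}B^T P_{t+1}x$ reducing everything to $Ry=0$ is a clean, self-contained rendering of the step the paper delegates to that citation.
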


\begin{proof}
The fact that the matrix $\Acl(t;Q)$ is invertible for $t = 1:\nu-1$ follows by an argument similar to the proof of \citep[Thm.~2.1]{zhang2019inverse}, since $R \succ 0$ and $\mfR_t \succ 0$ holds for $t = 1:\nu-1$.
To show that  $\tilde{A}_{cl}(t; Q, q)$ has full rank, we simply note that it has the upper block-triangular form \eqref{eq:tildeAcl}, and since $\Acl(t;Q)$ has full rank so does $\tilde{A}_{cl}(t; Q, q)$.
\end{proof}

\section{Identifiability analysis and persistent excitation}\label{sec:identifiability}
Next, we investigate the inverse problem of recovering $(\Qtrue, \qtrue)$ from observations of optimal trajectories to problem \eqref{eq:stochastic_forward_problem}. Throughout the rest we will therefore, unless explicitly stated otherwise, assume that $\Rtrue = I$ and $(\Qtrue,\qtrue)\in\mscrF(\Rtrue=I)$.
We start by considering the identifiability of the problem. 

To this end, first note that from the analysis in Section~\ref{sec:forward_problem}, for any parameters $(Q,q)\in\mscrF(I)$, the agent's behavior is completely determined by the time-varying closed-loop system matrices $\tilde{A}_{cl}(t; Q, q)$ in \eqref{eq:tildeAcl}.
In the spirit of \citep[Sec.~5.1]{ljung1994global}, we can thus see the sequence of closed-loop system matrices $\{ \tilde{A}_{cl}(t; Q, q) \}_{t = 1}^{\nu-1}$ as the model structure.
 Therefore, the fundamental question for identifiability is if there exist two different sets of parameters $(Q, q)$ and $(Q^\prime, q^\prime)$ such that  $\tilde{A}_{cl}(t; Q, q)  = \tilde{A}_{cl}(t; Q^\prime, q^\prime)$ for all $t = 1:\nu-1$.

\begin{proposition}[Identifiability]\label{prop:structural_identifiability}
Under Assumptions~\ref{ass:controlability_and_full_rank}, \ref{ass:IID}, \ref{ass:planning_horizon} and \ref{ass:persistent_excitation}, given  $(Q, q), (Q^\prime, q^\prime) \in \mscrF(I)$, if $\tilde{A}_{cl}(t; Q, q)  = \tilde{A}_{cl}(t; Q^\prime, q^\prime)$ for all $t = 1:\nu-1$, then  $(Q, q) = (Q^\prime, q^\prime)$.
\end{proposition}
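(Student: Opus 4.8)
The plan is to exploit the explicit block form of the closed-loop matrix in \eqref{eq:tildeAcl} to convert the hypothesis into equalities between the feedback gains and feedforward terms of the two parameter tuples, and then to propagate these equalities backward through the Riccati recursions \eqref{eq:generalized_riccati_iterations}. Throughout, write $P_t,\eta_t,\mfR_t,\mfS_t,g_t$ for the quantities generated by $(Q,q)$ and $P_t',\eta_t',\mfR_t',\mfS_t',g_t'$ for those generated by $(Q^\prime,q^\prime)$; by Proposition~\ref{prop:mfR_pd} the matrices $\mfR_t,\mfR_t'$ are all strictly positive definite (recall $\Rtrue=I$), so the pseudo-inverses in \eqref{eq:generalized_riccati_iterations} are genuine inverses.

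First I would use that $B$ has full column rank (Assumption~\ref{ass:controlability_and_full_rank}), so that equality of the $(1,1)$- and $(1,2)$-blocks in \eqref{eq:tildeAcl} forces $K_t:=\mfR_t^{-1}\mfS_t=(\mfR_t')^{-1}\mfS_t'=:K_t'$ and $\mfR_t^{-1}g_t=(\mfR_t')^{-1}g_t'$ for all $t=1:\nu-1$. In particular $\Acl(t;Q)=A-BK_t=\Acl(t;Q^\prime)=:F_t$, which is invertible by Corollary~\ref{cor:Acl_full_rank}. The key algebraic observation is that rearranging $\mfR_tK_t=\mfS_t$ gives the identity $K_t=B^TP_{t+1}F_t$, and likewise $K_t'=B^TP_{t+1}'F_t$; since $K_t=K_t'$ and $F_t$ is invertible, this yields $B^T\Delta_{t+1}=0$ for $t=1:\nu-1$, where $\Delta_t:=P_t-P_t'$ is symmetric (so also $\Delta_{t+1}B=0$).

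Next I would rewrite the Riccati recursion \eqref{eq:generalized_riccati_2} as $P_t=A^TP_{t+1}F_t+Q$ and subtract the primed identity, obtaining $\Delta_t=A^T\Delta_{t+1}F_t+\Delta Q$ with $\Delta_\nu=\Delta Q:=Q-Q^\prime$; using $\Delta_{t+1}B=0$ to replace $F_t$ by $A$ collapses this to $\Delta_t=A^T\Delta_{t+1}A+\Delta Q$, whose closed form is $\Delta_t=\sum_{j=0}^{\nu-t}(A^T)^j\,\Delta Q\,A^j$. Imposing $\Delta_tB=0$ for $t=\nu,\nu-1,\dots,2$ and stripping off one summand at a time (legitimate since $A$, hence $A^T$, is invertible) yields $\Delta Q\,A^jB=0$ for $j=0:\nu-2$; since $\nu\ge n+1$ (Assumption~\ref{ass:planning_horizon}) this includes $j=0:n-1$, so $\Delta Q$ annihilates the controllability matrix $[\,B\ AB\ \cdots\ A^{n-1}B\,]$, which has full row rank by controllability, whence $\Delta Q=0$, i.e.\ $Q=Q^\prime$. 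It then remains to recover $q$: with $Q=Q^\prime$, backward induction on \eqref{eq:generalized_riccati_2} gives $P_t=P_t'$, hence $\mfR_t=\mfR_t'$, for all $t$; combined with $\mfR_t^{-1}g_t=(\mfR_t')^{-1}g_t'$ this forces $g_t=g_t'$, and then $g_t=B^T\eta_{t+1}+B^TP_{t+1}d$ with $P_{t+1}=P_{t+1}'$ gives $B^T\delta_{t+1}=0$ for $t=1:\nu-1$, where $\delta_t:=\eta_t-\eta_t'$. Writing \eqref{eq:generalized_riccati_4} as $\eta_t=F_t^T(\eta_{t+1}+P_{t+1}d)+q$ and repeating the argument on $\delta_t$ (again replacing $F_t^T$ by $A^T$ thanks to $B^T\delta_{t+1}=0$) produces $\delta_t=\sum_{j=0}^{\nu-t}(A^T)^j(q-q^\prime)$, and then $B^T\delta_t=0$ for $t=\nu,\dots,2$ gives $(A^jB)^T(q-q^\prime)=0$ for $j=0:n-1$; thus $q-q^\prime$ is orthogonal to the range of the controllability matrix, so $q=q^\prime$.

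The main obstacle, I expect, is the bookkeeping in the second and third paragraphs: deriving the identity $K_t=B^TP_{t+1}F_t$ and then using $B^T\Delta_{t+1}=0$ to decouple the perturbed Riccati recursion into the clean $A$-only form $\Delta_t=A^T\Delta_{t+1}A+\Delta Q$. Once that reduction is in place, the closed-form sums together with invertibility of $A$, controllability of $(A,B)$, and the horizon bound $\nu\ge n+1$ make both conclusions essentially mechanical.
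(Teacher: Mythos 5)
Your proof is correct and follows essentially the same route as the paper: deduce $\Delta Q = 0$ from equality of the state-feedback part of the closed-loop matrices, then propagate $B^T\Delta\eta_{t+1}=0$ through the $\eta$-recursion and invoke controllability of $(A,B)$ together with $\nu\ge n+1$ to kill $\Delta q$. The only difference is that where the paper disposes of the $\Delta Q=0$ step by deferring to \citep[Thm.~2.1]{zhang2019inverse}, you reconstruct that argument explicitly via the identity $K_t=B^TP_{t+1}\Acl(t;Q)$ and the decoupled difference recursion $\Delta_t=A^T\Delta_{t+1}A+\Delta Q$, which makes the proof self-contained (and, incidentally, uses only the $\nu-1$ conditions $\Delta Q A^jB=0$, $j=0:\nu-2$, that the hypothesis actually yields, rather than the $\nu$ rows displayed in the paper's matrix $\Gamma$).
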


\begin{proof}
Assume that $\tilde{A}_{cl}(t;Q,q)=\tilde{A}_{cl}(t;Q^\prime,q^\prime)$ for $t=1:\nu-1$, and let $(P_k, \eta_k, \mfS_k, \mfR_k, \g_k)_{k=\nu-N+1}^{\nu-1}$ and $(P_k^\prime, \eta_k^\prime, \mfS_k^\prime, \mfR_k^\prime, \g_k^\prime)_{k=\nu-N+1}^{\nu-1}$ be the solutions to \eqref{eq:generalized_riccati_iterations} and \eqref{eq:psd_kernel_containment_cond} for $(Q, q)$ and $(Q^\prime, q^\prime)$, respectively. Moreover, let $Q^\prime = Q+\Delta Q$, $q^\prime = q + \Delta q$, $P_t^\prime = P_t+\Delta P_t$, $\eta_t^\prime = \eta_t+\Delta \eta_t$, $\mfS_t^\prime = \mfS_t + \Delta \mfS_t$, $\mfR_t^\prime = \mfR_t + \Delta \mfR_t$, and $\g_t^\prime = \g_t + \Delta \g_t$.
Since $\tilde{A}_{cl}(t;Q,q)=\tilde{A}_{cl}(t;Q^\prime,q^\prime)$, for $t=1:\nu-1$, it follows that $A_{cl}(t;Q)=A_{cl}(t;Q^\prime)$, for $t=1:\nu-1$.
Then following the line of arguments in \citep[Thm.~2.1]{zhang2019inverse}, we can conclude that $\Delta Q = 0$. This in turn implies that $\Delta P_t$, $\Delta \mfR_t$ and $\Delta \mfS_t$ are all zero (cf.~\eqref{eq:generalized_riccati_iterations} and \eqref{eq:psd_kernel_containment_cond}).

Next, since $\tilde{A}_{cl}(t;Q,q)=\tilde{A}_{cl}(t;Q^\prime,q^\prime)$ and $\Delta \mfR_t = 0$, for $t=1:\nu-1$, it follows that
$d - B\mfR_t^{-1} \g_t = d - B\mfR_t^{-1} \g_t^\prime$ for $t=1:\nu-1$. Therefore, it holds that $ B\mfR_t^{-1} \Delta \g_t = 0$ for $t=1:\nu-1$. Since $B$ is full column rank by assumption, and since $\mfR_t^{-1} \succ 0$ by Proposition~\ref{prop:mfR_pd}, we must have that $\Delta \g_t = 0$ for $t=1:\nu-1$.
In view of \eqref{eq:mfg_true}, we therefore have that
\[
B^T \Delta \eta_{t+1} =  \Delta g_t  = 0, \quad t=1:\nu-1.
\]
Next, in view of \eqref{eq:generalized_riccati_3} and \eqref{eq:generalized_riccati_4}, this in turn implies that
\begin{align*}
\Delta \eta_\nu = \Delta q, \quad \Delta\eta_t = A^T\Delta\eta_{t+1}+\Delta q,\quad t=1:\nu-1.
\end{align*}
Thus, we have
\begin{subequations}\label{eq:Delta q}
\begin{align}
&\Delta \eta_\nu = \Delta q,\quad B^T \Delta \eta_\nu=B^T\Delta q = 0,\\
&B^T\Delta \eta_{\nu-1}=B^T(A^T\Delta \eta_\nu+\Delta q)\nonumber\\
&=B^TA^T\Delta q+\underbrace{B^T\Delta q}_{=0} = 0.
\end{align}
Furthermore,
\begin{align}
&B^T\Delta\eta_{\nu-2} = B^T(A^T\Delta\eta_{\nu-1}+\Delta q)\nonumber\\
&=B^T((A^T)^2\Delta \eta_\nu+A^T\Delta q+\Delta q)\nonumber\\
&=B^T(A^T)^2\Delta q+\underbrace{B^TA^T\Delta q+B^T\Delta q}_{=0}=0,\\
&\ldots\nonumber\\
&B^T\eta_{1} = B^T(A^T)^{\nu-1}\Delta q \nonumber\\
&\qquad+\underbrace{B^T(A^T)^{\nu-2}\Delta q+\ldots+B^T\Delta q}_{=0}=0.
\end{align}
\end{subequations}
Writing \eqref{eq:Delta q} in a compact form gives
\[
\underbrace{\begin{bmatrix}
B^T\\B^TA^T\\\vdots\\B^T(A^T)^{\nu-1}
\end{bmatrix}}_{\Gamma}\Delta q = 0.
\]
Since $(A,B)$ is controllable by Assumption~\ref{ass:controlability_and_full_rank}, and $\nu \geq n+1$ by Assumption~\ref{ass:planning_horizon}, the matrix  $\Gamma$ has full column rank, and hence $\Delta q=0$. The fact that $\Delta Q=0, \Delta q=0$ implies that $Q=Q^\prime, q=q^\prime$.
\end{proof}

This means that the parameters $(Q,q)$ that characterizes the closed-loop system matrices are identifiable.
Moreover, in view of \eqref{eq:stochastic_forward_problem}, we can see $(\bar{\mfx},\mfN)$ as the ``input" of the model and $\mfy_{1:\nu}$ as the ``output".
To this end, in order to uniquely identify the parameters $(\Qtrue,\qtrue)$, the ``input" $(\bar{\mfx},\mfN)$ needs to be ``persistently exciting" \citep[Sec.~5.1]{ljung1994global}. Notably, Assumption \ref{ass:planning_horizon} gives the persistent excitation assumption regarding $\mfN$.
Moreover, Assumption~\ref{ass:persistent_excitation} turns out to give a persistent excitation condition for the initial value $\bar{\mfx}$.
In fact, we have the following result, the proof of which we defer to the appendix. 

\begin{lemma}\label{lem:positive_definite_cov}
Let $(\bar{\mfx},\mfN)$ be as in Assumption~\ref{ass:persistent_excitation}. Then, for all $N \in \{2, \ldots, \nu\}$ such that $\mP(\mfN = N) > 0$,  $\cov_{\bar{\mfx} \mid \mfN = N}(\bar{\mfx},\bar{\mfx})\succ 0$.
\end{lemma}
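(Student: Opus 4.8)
The plan is to argue by contradiction, exploiting the fact that a covariance matrix is always positive semi-definite, so that failure of strict positivity is equivalent to the conditional variance vanishing in some direction. Fix $N \in \{2,\ldots,\nu\}$ with $\mP(\mfN = N) > 0$. First I would observe that $\mE[\|\bar{\mfx}\|^2\mid\mfN=N]<\infty$: this follows from $\mE[\|\bar{\mfx}\|^2]<\infty$ (Assumption~\ref{ass:persistent_excitation}) together with $\mE[\|\bar{\mfx}\|^2]\ge \mP(\mfN=N)\,\mE[\|\bar{\mfx}\|^2\mid\mfN=N]$ and $\mP(\mfN=N)>0$. Hence $C_N := \cov_{\bar{\mfx}\mid\mfN=N}(\bar{\mfx},\bar{\mfx})$ is well-defined and $C_N\succeq 0$. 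Suppose, for contradiction, that $C_N\not\succ 0$. Then there is a unit vector $v\in\mR^n$ with $v^T C_N v = 0$, i.e.\ $\mathrm{Var}(v^T\bar{\mfx}\mid\mfN=N)=0$, which forces $v^T\bar{\mfx}=c$ almost surely under $\mP(\cdot\mid\mfN=N)$, where $c := v^T\mE[\bar{\mfx}\mid\mfN=N]$.

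Next I would invoke the persistent-excitation part of Assumption~\ref{ass:persistent_excitation} twice, with $\chi=v$ and with $\chi=-v$, obtaining $\rho_1,\rho_2>0$ such that, for every $\epsilon>0$, $\mP(\bar{\mfx}\in\mathscr{B}^{n}_{\epsilon}(\rho_1 v)\mid\mfN=N)>0$ and $\mP(\bar{\mfx}\in\mathscr{B}^{n}_{\epsilon}(-\rho_2 v)\mid\mfN=N)>0$. By Cauchy--Schwarz and $\|v\|=1$, on the first event $|v^T\bar{\mfx}-\rho_1|<\epsilon$ and on the second $|v^T\bar{\mfx}+\rho_2|<\epsilon$. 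Since $\rho_1 > 0 > -\rho_2$, picking $\epsilon < (\rho_1+\rho_2)/2$ makes the intervals $(\rho_1-\epsilon,\rho_1+\epsilon)$ and $(-\rho_2-\epsilon,-\rho_2+\epsilon)$ disjoint, so the single number $c$ can lie in at most one of them. Consequently at least one of the two positive-probability events implies $v^T\bar{\mfx}\neq c$, contradicting that $v^T\bar{\mfx}=c$ almost surely conditioned on $\mfN=N$. Therefore $C_N\succ 0$, and since $N$ was arbitrary the claim follows.

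I do not anticipate a serious obstacle; the only points needing care are the measure-theoretic bookkeeping — confirming the conditional mean and variance are finite, and that "variance zero implies almost-sure constancy" is applied under the conditional law $\mP(\cdot\mid\mfN=N)$ — and the elementary choice of $\epsilon$ separating the two clusters of conditional mass. A one-sided variant (apply the assumption only with $\chi=v$, deduce $v^T\bar{\mfx}$ is concentrated near $\rho_1>0$, hence $c=\rho_1\|v\|^2\ne 0$, then rerun with $\chi=-v$) also works, but the symmetric version above is cleaner since it directly exhibits two incompatible values of $v^T\bar{\mfx}$.
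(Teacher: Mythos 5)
Your proof is correct, and its skeleton matches the paper's: both argue by contradiction, both reduce a degenerate direction $v$ of the conditional covariance to the statement that $v^T\bar{\mfx}$ equals a constant $c$ almost surely under $\mP(\cdot\mid\mfN=N)$ (you via the elementary fact that zero variance forces almost-sure constancy, the paper via the equality case of Jensen's inequality --- the same fact in different clothing), and both then contradict this using Assumption~\ref{ass:persistent_excitation}. The genuine difference is the endgame. The paper splits on whether $c=0$ or $c\neq 0$: in the first case it applies the excitation condition with $\chi=v$ itself, and in the second it applies it with $\chi=v_2$, a second eigenvector orthogonal to $v$, observing that small balls around $\rho v_2$ miss the affine hyperplane $\{y : v^Ty=c\}$. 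You instead apply the condition twice, with $\chi=v$ and $\chi=-v$, obtain positive conditional mass in two balls whose projections onto $v$ are disjoint intervals around $\rho_1>0$ and $-\rho_2<0$, and note that the single constant $c$ cannot lie in both. This buys two small things: it avoids the case split entirely, and it does not tacitly rely on the existence of a second orthogonal direction (i.e., on $n\ge 2$) the way the paper's $c\neq 0$ branch does. Your preliminary observation that $\mE[\|\bar{\mfx}\|^2\mid\mfN=N]<\infty$ follows from $\mE[\|\bar{\mfx}\|^2]<\infty$ and $\mP(\mfN=N)>0$, so that the conditional covariance is well defined and the zero-variance argument is legitimate, is a point the paper glosses over; it is correct and worth keeping.
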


\begin{proof}
See Appendix.
\end{proof}

This result can now be used to prove the following Lemma, which is useful in the IOC algorithm construction to come. Similarly, the proof of this Lemma is also deferred to the appendix.

\begin{lemma}[Persistent excitation]\label{lem:stochastic_persistent_excitation}
Suppose that $(\Qtrue,\qtrue)\in\mscrF(I)$ and let $\tilde{\mfx}_t:=[\mfx_t^T,1]^T$.
Under Assumptions~\ref{ass:controlability_and_full_rank}, \ref{ass:IID}, \ref{ass:planning_horizon}, and \ref{ass:persistent_excitation}, it holds that $\mE_{\mfx_t|\mfN=\nu}[\tilde{\mfx}_t\tilde{\mfx}_t^T]\succ 0$, and $\mE[\|\tilde{\mfx}_t\|^2]<\infty$ for all $ t=1:\nu$.
\end{lemma}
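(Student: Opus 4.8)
The plan is to treat the two claims separately. For the finite-second-moment claim, since $\mfN$ takes values in the finite set $\{2,\ldots,\nu\}$ (Assumption~\ref{ass:planning_horizon}), it suffices to show $\mE[\|\tilde{\mfx}_t\|^2\mid\mfN=N]<\infty$ for every $N$ with $\mP(\mfN=N)>0$ and then form the weighted sum. Conditioned on $\mfN=N$, the trajectory equals $[\bar{\mfx}^T,1]^T$ for $t\le\nu-N+1$, while for $t>\nu-N+1$ it is, by Corollary~\ref{corr:no_nullspace_component} together with Remark~\ref{rem:state_space_expansion}, a fixed affine function of $\tilde{\mfx}_{\nu-N+1}=[\bar{\mfx}^T,1]^T$ plus a fixed linear combination of the noises $\tilde{\mfw}_{\nu-N+1},\ldots,\tilde{\mfw}_{t-1}$, with all coefficient matrices being products of the finitely many fixed closed-loop matrices and hence of uniformly bounded norm. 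Using the triangle inequality, $(\sum_{i=1}^k a_i)^2\le k\sum_{i=1}^k a_i^2$, the identity $\mE[\|\mfw_s\|^2]=\tr(\Sigma_w)$, and the independence of $\{\mfw_s\}$ from $\mfN$ (Assumption~\ref{ass:IID}), one obtains a bound of the form $\mE[\|\tilde{\mfx}_t\|^2\mid\mfN=N]\le c_N(\mE_{\bar{\mfx}|\mfN=N}[\|\bar{\mfx}\|^2]+N\tr(\Sigma_w)+1)$; since $\mE_{\bar{\mfx}|\mfN=N}[\|\bar{\mfx}\|^2]\le\mE[\|\bar{\mfx}\|^2]/\mP(\mfN=N)<\infty$ by Assumption~\ref{ass:persistent_excitation}, the claim follows.

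For the positive-definiteness claim, fix $t$ and work conditionally on $\mfN=\nu$, so that the agent applies control on $1:\nu-1$ and, by Corollary~\ref{corr:no_nullspace_component} and Remark~\ref{rem:state_space_expansion}, $\tilde{\mfx}_{s+1}=\tilde{A}_{cl}(s;\Qtrue,\qtrue)\tilde{\mfx}_s+\tilde{\mfw}_s$ with $\tilde{\mfx}_1=[\bar{\mfx}^T,1]^T$. Unrolling the recursion gives $\tilde{\mfx}_t=\Phi_{t,1}\tilde{\mfx}_1+\sum_{s=1}^{t-1}\Phi_{t,s+1}\tilde{\mfw}_s$, where $\Phi_{t,\tau}:=\tilde{A}_{cl}(t-1;\Qtrue,\qtrue)\cdots\tilde{A}_{cl}(\tau;\Qtrue,\qtrue)$ for $\tau<t$ and $\Phi_{t,t}:=I$; by Corollary~\ref{cor:Acl_full_rank} each factor is invertible, hence $\Phi_{t,1}$ is invertible. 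Since $\{\mfw_s\}$ are zero-mean and independent of $\bar{\mfx}$ and $\mfN$ (Assumption~\ref{ass:IID}), all cross terms vanish under $\mE[\cdot\mid\mfN=\nu]$ and
\begin{equation*}
\mE_{\mfx_t|\mfN=\nu}[\tilde{\mfx}_t\tilde{\mfx}_t^T]=\Phi_{t,1}\mE_{\bar{\mfx}|\mfN=\nu}[\tilde{\mfx}_1\tilde{\mfx}_1^T]\Phi_{t,1}^T+\sum_{s=1}^{t-1}\Phi_{t,s+1}\mE[\tilde{\mfw}_s\tilde{\mfw}_s^T]\Phi_{t,s+1}^T.
\end{equation*}
The second sum is positive semi-definite and $\Phi_{t,1}$ is invertible, so it suffices to prove $\mE_{\bar{\mfx}|\mfN=\nu}[\tilde{\mfx}_1\tilde{\mfx}_1^T]\succ 0$: this forces the first term, and hence the whole expression, to be positive definite.

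To see that $\mE_{\bar{\mfx}|\mfN=\nu}[\tilde{\mfx}_1\tilde{\mfx}_1^T]\succ 0$, put $\mu:=\mE_{\bar{\mfx}|\mfN=\nu}[\bar{\mfx}]$ (finite by Assumption~\ref{ass:persistent_excitation}) and $\Sigma:=\cov_{\bar{\mfx}|\mfN=\nu}(\bar{\mfx},\bar{\mfx})$, so that
\begin{equation*}
\mE_{\bar{\mfx}|\mfN=\nu}[\tilde{\mfx}_1\tilde{\mfx}_1^T]=\begin{bmatrix}\Sigma+\mu\mu^T & \mu\\ \mu^T & 1\end{bmatrix}=\begin{bmatrix}\Sigma & \zero\\ \zero^T & 0\end{bmatrix}+\begin{bmatrix}\mu\\ 1\end{bmatrix}\begin{bmatrix}\mu^T & 1\end{bmatrix}.
\end{equation*}
For any $z=[z_1^T,z_2]^T\ne 0$ we have $z^T\mE_{\bar{\mfx}|\mfN=\nu}[\tilde{\mfx}_1\tilde{\mfx}_1^T]z=z_1^T\Sigma z_1+(z_1^T\mu+z_2)^2$, and since $\Sigma\succ 0$ by Lemma~\ref{lem:positive_definite_cov} (applicable because $\mP(\mfN=\nu)>0$, Assumption~\ref{ass:planning_horizon}), this is strictly positive if $z_1\ne 0$ and equals $z_2^2>0$ if $z_1=0$. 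Hence $\mE_{\bar{\mfx}|\mfN=\nu}[\tilde{\mfx}_1\tilde{\mfx}_1^T]\succ 0$, completing the argument. The step I expect to be the main obstacle is correctly handling the extended-state structure: the transition noise $\tilde{\mfw}_s=[\mfw_s^T,0]^T$ is degenerate, so positive definiteness cannot come from the propagated noise covariance and must instead be traced back, through the invertible maps $\Phi_{t,1}$ supplied by Corollary~\ref{cor:Acl_full_rank}, to the block-diagonal-plus-rank-one decomposition of $\mE_{\bar{\mfx}|\mfN=\nu}[\tilde{\mfx}_1\tilde{\mfx}_1^T]$ and Lemma~\ref{lem:positive_definite_cov}; care is also needed to ensure that conditioning on $\mfN=\nu$ leaves $\bar{\mfx}$ and the noises independent so that the cross terms really drop out.
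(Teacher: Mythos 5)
Your proof is correct and follows essentially the same route as the paper: both reduce positive definiteness of $\mE_{\mfx_t|\mfN=\nu}[\tilde{\mfx}_t\tilde{\mfx}_t^T]$ through the invertible closed-loop maps (Corollary~\ref{cor:Acl_full_rank}) to positive definiteness of $\mE_{\bar{\mfx}|\mfN=\nu}[\tilde{\mfx}_1\tilde{\mfx}_1^T]$, which in turn rests on Lemma~\ref{lem:positive_definite_cov}, and both bound the second moments via the same recursion. The only cosmetic differences are that you unroll the recursion instead of arguing by induction, and you verify $\mE_{\bar{\mfx}|\mfN=\nu}[\tilde{\mfx}_1\tilde{\mfx}_1^T]\succ 0$ by a direct quadratic-form computation where the paper invokes a Schur-complement theorem.
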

\begin{proof}
See Appendix.
\end{proof}

\section{The IOC algorithm}\label{sec:IOC_algorithm}
In this section, we construct the IOC algorithm for general linear-quadratic systems with different time-horizon lengths.
In particular, we show that the algorithm is statistically consistent, i.e., that it converges in probability to the true underlying parameter.
For the sake of brevity, in some of the following we sometimes use the notation $(\cdot)$ for the arguments of some functions.

In order to construct the IOC algorithm, we further make the following assumption.
\begin{assumption}[Bounded parameters]\label{ass:bounded_parameter}
The parameter tuple $(\Qtrue,\qtrue)$ that governs the agents tracking behaviour lies in the compact set
\[
\mathbb{G}(\varphi) :=\left\{(\Qtrue\in\mS^n,\qtrue\in\mR^n) \mid \| 
\begin{bmatrix}
\Qtrue &\qtrue\\
\qtrue &0
\end{bmatrix}\|_F\le \varphi\right\},
\]
for some (potentially unknown) $0 < \varphi < \infty$.
\end{assumption}
This assumption is mild, since when we solve the corresponding inverse problem in practice, we can always set $\varphi$ arbitrary large if we have no prior knowledge on the norm bound of the parameters.

\subsection{Construction and empirical approximation}\label{subsec:construction_and_approximation}
To this end, the algorithm is constructed based on the necessary and sufficient optimality conditions in Theorem \ref{thm:indefinite_LQR}.
More precisely, we are interested in finding the best $(Q^\star,q^\star)\in\mscrF(I)$ that suits the collected optimal state trajectory observations $\{y_t^i\}_{t=1}^{\nu}$ of the agent's trials $i=1:M$.
Thus, the IOC algorithm will be built upon an optimization problem
which is constructed so that it has a unique optimal solution $(Q^\star,q^\star)$ which is the ``true" $(\Qtrue,\qtrue)\in \mscrF(I)$.

First, we construct the objective function for the optimization problem. 
Given a realization of the planning horizon $N$, an optimal control signal $\mu_t^\star$ that is optimal to $(Q,q)$ (assuming that $(Q,q) \in \mathscr{F}(I)$) and an optimal control signal $\bar{\mu}_t$ that is optimal to $(\Qtrue,\qtrue)$, it holds for all state $\chi_t\in\mR^n$ that
\begin{align*}
0&=\frac{1}{2}\chi_t^TQ\chi_t\!+\!q^{T}\chi_t\!+\!\frac{1}{2}\|\mu_t^{\star}\|^2\!+\!\mE_{\mfw_t}[V_{t+1}(A\chi_t\!+\!B\mu_t^{\star}\!+\!d\\
&\quad \!+\!\mfw_t)]\!-\!V_t(\chi_t)\\
&=\min_{\mu_t}\Big\{\frac{1}{2}\chi_t^TQ\chi_t+q^{T}\chi_t+\frac{1}{2}\|\mu_t\|^2\\
&\quad +\mE_{\mfw_t}[V_{t+1}(A\chi_t+B\mu_t+d+\mfw_t)]\Big\}-V_t(\chi_t)\\
&\le  \frac{1}{2}\chi_t^TQ\chi_t\!+\!q^{T}\chi_t\!+\!\frac{1}{2}\|\bar{\mu}_t \|^2\!+\!\mE_{\mfw_t}[V_{t+1}(A\chi_t\!+\!B\bar{\mu}_t\!+\!d\\
&\quad \!+\!\mfw_t)]\!-\!V_t(\chi_t),
\end{align*}
where the inequality follows since $\bar{\mu}_t$ is not necessarily optimal to $(Q,q,\Rtrue=I)$, and where $V_t(\cdot)$ has the form \eqref{eq:Vt}, and $P_{t:t+1}$, $\eta_{t:t+1}$, and $\gamma_{t:t+1}$ in $V_t(\cdot)$ are determined by $(Q,q)$ via \eqref{eq:generalized_riccati_iterations}.
Moreover, seen intuitively from the other perspective, for given $\chi_t$ and $\bar{\mu}_t$, we expect the inequality to hold unless we plug in $(Q,q,\Rtrue=I)$ which renders the state $\chi_t$ and control $\bar{\mu}_t$ optimal.
We hence define the ``violation" of HJBE at time step $t$ by
\begin{align*}
&\psi_{t,N}(Q,q;\chi_t,\mu_t):=\\
&\mE_{\mfw_t}[ V_{t+1}(A\chi_t + B \mu_t + d+\mfw_t)] +\frac{1}{2} \chi_t^T Q \chi_t+q^T \chi_t \\
&\quad+ \frac{1}{2}\| \mu_t \|_2^2- V_t(\chi_t), \quad t=\nu-N+1:\nu-1,
\end{align*}
since we expect that $\psi_{t,N}(Q,q;\chi_t,\mu_t) \geq 0$. The latter will be formally proved in Theroem \ref{thm:IOC_Q_optimal}.

Now, plugging \eqref{eq:Vt} and \eqref{eq:stochastic_riccati_iteration_constant_term} in to the above equation, we have
\begin{align}
&\psi_{t,N}(Q,q;\chi_t,\mu_t)=\mE_{\mfw_t}\Big[\frac{1}{2}(A\chi_t+B\mu_t+d+\mfw_t)^TP_{t+1} \nonumber \\
&\times(A\chi_t+B\mu_t+d+\mfw_t)+\eta_{t+1}^T(A\chi_t+B\mu_t+d+\mfw_t)\Big] \nonumber \\
&+\frac{1}{2}\chi_t^TQ\chi_t+q^T\chi_t+\frac{1}{2}\|\mu_t\|^2-\frac{1}{2}\chi_t^TP_t\chi_t-\eta_t^T\chi_t \nonumber \\
&+\frac{1}{2}g_t^T\mfR_t^\dagger g_t-\frac{1}{2}d^TP_{t+1}d-\eta_t^Td-\frac{1}{2}\tr(P_{t+1}\Sigma_w). \label{eq:psi_t_N}
\end{align}
Given a realization of the planning horizon $N$, let $\mfx_{\nu-N+1:\nu}$ and $\mfu_{\nu-N+1:\nu-1}$ be the optimal trajectory and control. We let $\chi_t=\mfx_t$ and $\mu_t=\mfu_t$, and take the expectation of $\psi_{t,N}(Q,q;\mfx_t,\mfu_t)$ with respect to $\mfx_t|\mfN=N$.
In view of \eqref{eq:stochastic_forward_problem_dynamics}, this gives
\begin{align}
&\mE_{\mfx_t|\mfN=N}[\psi_{t,N}(Q,q;\mfx_t,\mfu_t)]=\nonumber\\
&\mE_{\mfx_t|\mfN=N}\Big[\mE_{\mfw_t}\Big[\frac{1}{2}\underbrace{(A\mfx_t+B\mfu_t+d+\mfw_t)}_{\mfx_{t+1}}^TP_{t+1}\nonumber\\
&\times\underbrace{(A\mfx_t+B\mfu_t+d+\mfw_t)}_{\mfx_{t+1}}+\eta_{t+1}^T\underbrace{(A\mfx_t+B\mfu_t+d+\mfw_t)}_{\mfx_{t+1}}\Big]\nonumber\\
&+\frac{1}{2}\mfx_t^TQ\mfx_t+q^T\mfx_t+\frac{1}{2}\|\mfu_t\|^2-\frac{1}{2}\mfx_t^TP_t\mfx_t-\eta_t^T\mfx_t\nonumber\\
&+\frac{1}{2}g_t^T\mfR_t^\dagger g_t-\frac{1}{2}d^TP_{t+1}d-\eta_t^Td-\frac{1}{2}\tr(P_{t+1}\Sigma_w)\Big]\nonumber\\
&=\mE_{\mfx_{t+1}|\mfN=N}\Big[\frac{1}{2}\mfx_{t+1}^TP_{t+1}\mfx_{t+1}+\eta_{t+1}^T\mfx_{t+1}\Big]\nonumber\\
&+\mE_{\mfx_t|\mfN=N}\Big[\frac{1}{2}\mfx_t^TQ\mfx_t+q^T\mfx_t+\frac{1}{2}\|\mfu_t\|^2-\frac{1}{2}\mfx_t^TP_t\mfx_t-\eta_t^T\mfx_t\Big]\nonumber\\
&+\frac{1}{2}g_t^T\mfR_t^\dagger g_t-\frac{1}{2}d^TP_{t+1}d-\eta_t^Td-\frac{1}{2}\tr(P_{t+1}\Sigma_w).\label{eq:obj_construct_x_t}
\end{align}
However, the above expression is constructed based on $\mfx_{t}$, while the observations are in terms of $\mfy_t$.
To rewrite it in terms of $\mfy_t$, first we simply add and subtract some terms in the expression above:
\begin{align}
&\mE_{\mfx_t|\mfN=N}[\psi_{t,N}(Q,q;\mfx_t,\mfu_t)]= \mE_{\mfx_t|\mfN=N}[\psi_{t,N}(Q,q;\mfx_t,\mfu_t)] \nonumber \\
& +\frac{1}{2}\tr(P_{t+1}\Sigma_v)-\frac{1}{2}\tr(P_t\Sigma_v) + \frac{1}{2} \tr(Q \Sigma_v) \nonumber \\
& -\frac{1}{2}\tr(P_{t+1}\Sigma_v)+\frac{1}{2}\tr(P_t\Sigma_v) - \frac{1}{2} \tr(Q \Sigma_v). \label{eq:obj_construct_add_and_subtract}
\end{align}
On the other hand, by Assumption~\ref{ass:IID},
$\{\rvv_t\}_{t=1}^{\infty}$ are independent of any other stochastic elements.  Using the cyclic permutation property of the matrix trace operator, we know that
\begin{align*}
&\mE_{\rvv_t}[\rvv_t^TP_{t+1}\rvv_t]= \mE_{\rvv_t}[\tr(\rvv_t^TP_{t+1}\rvv_t)]=  \mE_{\rvv_t}[\tr(P_{t+1}\rvv_t\rvv_t^T)] \\
&=\tr(P_{t+1}\Sigma_v).
\end{align*}
Similarly, we also have $\mE_{\rvv_t}[\rvv_t^TP_t\rvv_t]=\tr(P_t\Sigma_v)$, $\mE_{\rvv_{t+1}}[\rvv_t^TQ\rvv_t]=\tr(Q\Sigma_v)$, $d^TP_{t+1}d=\tr(P_{t+1}dd^T)$.  In view of \eqref{eq:stochastic_riccati_iteration_constant_term}, \eqref{eq:noisy_observation} and the fact that $\mE_{\mfw_t}[\mfw_t]=0$, $\mE_{\rvv_t}[\rvv_t]=0$,
using \eqref{eq:obj_construct_x_t} and \eqref{eq:obj_construct_add_and_subtract} we can rewrite $\mE_{\mfx_t|\mfN=N}[\psi_{t,N}(Q,q;\mfx_t,\mfu_t)]$ as
\begin{align*}
&\mE_{\mfx_t|\mfN=N}[\psi_{t,N}(Q,q;\mfx_t,\mfu_t)] \\
&  = \mE_{\rvv_{t:t+1}}\Big[\mE_{\mfx_{t+1}|\mfN=N}\Big[\frac{1}{2}\underbrace{(\mfx_{t+1}+\rvv_{t+1})^T}_{\mfy_{t+1}^T}\\
& \; \times P_{t+1}\underbrace{(\mfx_{t+1}+\rvv_{t+1})}_{\mfy_{t+1}}+\eta_{t+1}^T\underbrace{(\mfx_{t+1}+\rvv_{t+1})}_{\mfy_{t+1}}\Big]\\
& \; +\mE_{\mfx_t|\mfN=N}\Big[\frac{1}{2}\underbrace{(\mfx_t+\rvv_t)^T}_{\mfy_t^T}Q\underbrace{(\mfx_t+\rvv_t)}_{\mfy_t}+q^T\underbrace{(\mfx_t+\rvv_t)}_{\mfy_t}\\
& \; +\frac{1}{2}\|\mfu_t\|^2-\frac{1}{2}\underbrace{(\mfx_t+\rvv_t)^T}_{\mfy_t^T}P_t\underbrace{(\mfx_t+\rvv_t)}_{\mfy_t}-\eta_t^T\underbrace{(\mfx_t+\rvv_t)}_{\mfy_t}\Big]\\
& \; +\frac{1}{2}g_t^T\mfR_t^\dagger g_t-\frac{1}{2}\tr(P_{t+1}dd^T)-\eta_t^Td-\frac{1}{2}\tr(P_{t+1}\Sigma_w)\\
& \; -\frac{1}{2}\tr(P_{t+1}\Sigma_v)+\frac{1}{2}\tr(P_t\Sigma_v) - \frac{1}{2} \tr(Q \Sigma_v)\Big] \\
&  = \!\!\! \mE_{\mfy_{t+1}|\mfN=N} \! \Big[\frac{1}{2}\mfy_{t+1}^T P_{t+1}\mfy_{t+1}+\eta_{t+1}^T\mfy_{t+1}\Big] \! + \!\!\! \mE_{\mfx_t|\mfN=N} \Big[ \frac{1}{2}\|\mfu_t\|^2 \Big] \\
& \; +\mE_{\mfy_t|\mfN=N}\Big[\frac{1}{2}\mfy_t^T Q \mfy_t+q^T\mfy_t -\frac{1}{2}\mfy_t^T P_t \mfy_t-\eta_t^T\mfy_t\Big]\\
& \; +\frac{1}{2}g_t^T\mfR_t^\dagger g_t-\frac{1}{2}\tr(P_{t+1}dd^T)-\eta_t^Td-\frac{1}{2}\tr(P_{t+1}\Sigma_w)\\
& \; -\frac{1}{2}\tr(P_{t+1}\Sigma_v)+\frac{1}{2}\tr(P_t\Sigma_v) - \frac{1}{2} \tr(Q \Sigma_v) \\
& =: \mE_{\mfy_{t:t+1}|\mfN=N} [ \tilde{\psi}_{t, N}(Q, q,  P_{t:t+1}, \eta_{t},  \xi_{t}; \mfy_{t:t+1}) ] \\
& \; + \mE_{\mfx_t|\mfN=N} [ \frac{1}{2}\|\mfu_t\|^2 ],
\end{align*}
where we introduce $\xi_t := g_t^T\mfR_t^\dagger g_t$. We construct the objective function $\Psi(Q, q,  P_{1:\nu}, \eta_{1:\nu},  \xi_{1:\nu-1})$ by summing up the above equation from $t=\nu-N+1$ to $\nu-1$, but excluding the terms $\mE_{\mfx_t|\mfN=N} [ \frac{1}{2}\|\mfu_t\|^2 ]$ which are constants, and taking the expectation over $\mfN$. In particular, 
\begin{align}
&\Psi(\cdot) := \sum_{N = 2}^{\nu}\mP(\mfN=N)\mE_{\mfy_{\nu-N+1:\nu}|\mfN=N}\left[\tilde{\psi}_N(\cdot) \right],\label{eq:stochastic_ioc_obj_rewriting}
\end{align}
where 
\begin{align}
& \tilde{\psi}_N(\cdot) = \sum_{t=\nu-N+1}^{\nu-1}\tilde{\psi}_{t,N}(\cdot) \nonumber \\
& = \frac{1}{2}\mfy_{\nu}^TP_{\nu}\mfy_{\nu} +\eta_{\nu}^T\mfy_{\nu} - \frac{1}{2}\mfy_{\nu-N+1}^TP_{\nu-N+1}\mfy_{\nu-N+1} \nonumber\\
&\; -\eta_{\nu-N+1}^T\mfy_{\nu-N+1} - \frac{1}{2}\tr(P_{\nu}\Sigma_v)+\frac{1}{2}\tr(P_{\nu-N+1}\Sigma_v)\nonumber\\
&\;+\sum_{t=\nu-N+1}^{\nu-1}\Big(\frac{1}{2}\xi_t -\frac{1}{2}\tr(P_{t+1}dd^T)-\eta_{t+1}^Td + \frac{1}{2} \mfy_t^TQ\mfy_t\nonumber \\
&\;  + q^T\mfy_t - \frac{1}{2}\tr(P_{t+1}\Sigma_w)-\frac{1}{2}\tr(Q\Sigma_v)\Big).
\end{align}
The objective function \eqref{eq:stochastic_ioc_obj_rewriting} can be rewritten as a joint expectation over $\mfy_{1:\nu}$ and $\mfN$. However, we find the form in \eqref{eq:stochastic_ioc_obj_rewriting} more useful both in analysis and in implementation.

The objective function \eqref{eq:stochastic_ioc_obj_rewriting} is constructed with the idea that $\Psi(\cdot) + \sum_{t = 1}^{\nu-1}  \mE_{\mfx_t, \mfN}\left[\frac{1}{2} \|\mfu_t\|^2\right]$, where the latter is the discarded constant, should be bounded from below by $0$ for all $(Q,q)\in\mscrF(I)$. 
Therefore, ideally we would consider the problem of finding the point  $(Q^\star,q^\star)$ that minimizes \eqref{eq:stochastic_ioc_obj_rewriting} subject to \eqref{eq:generalized_riccati_iterations} and \eqref{eq:psd_kernel_containment_cond}.
From Theorem~\ref{thm:indefinite_LQR}, we know that \eqref{eq:generalized_riccati_iterations} and \eqref{eq:psd_kernel_containment_cond} are equivalent to \eqref{eq:nes_suff_existence_solution} and $\xi_t = g_t^T\mfR_t^\dagger g_t$ for $t = 1:\nu-1$.
However, while \eqref{eq:stochastic_ioc_obj_rewriting} is linear and hence a convex function, neither the constraint \eqref{eq:nes_suff_existence_solution} nor the constraint $\xi_t = g_t^T\mfR_t^\dagger g_t$ for $t = 1:\nu-1$ are convex. Even so, note that the only nonconvex part in \eqref{eq:nes_suff_existence_solution} is \eqref{eq:nes_suff_existence_solution_3}. We therefore consider the relaxed convex problem obtained by removing the constraints \eqref{eq:nes_suff_existence_solution_3} and $\xi_t = g_t^T\mfR_t^\dagger g_t$ for $t = 1:\nu-1$.%
\footnote{Another possibility would be to substitute $\xi_t = g_t^T\mfR_t^\dagger g_t$ into the matrix \eqref{eq:nes_suff_existence_solution_2}, which would (also) give a convex problem; see \citep{nordstrom2011convexity, nordstrom2018note}.}
In particular, the optimization problem for IOC reads
\begin{subequations}\label{eq:stochastic_IOC_opt_pro}
\begin{align}
\min_{\substack{(Q, q) \in \mathbb{G}(\varphi)\\ \{P_{t} \in \mathbb{S}^n_+(\varphi) \}_{t = 1:\nu},\\ \{ \eta_{t} \in \mathscr{B}^{n}_{\varphi}(0) \}_{t = 1:\nu},\\ \{ \xi_t \in\mathscr{B}^{1}_{\varphi}(0) \}_{t=1:\nu-1}}} & \; \Psi(Q, q,  P_{1:\nu}, \eta_{1:\nu},  \xi_{1:\nu-1})\nonumber\\
\st \quad & \; P_{\nu} = Q, \label{eq:stochastic_IOC_opt_pro_first_const}\\
& \; \eta_{\nu}=q\label{eq:stochastic_IOC_opt_pro_second_const}\\
& 
H_t \succeq 0,\quad t=1:\nu-1,\label{eq:stochastic_IOC_opt_pro_last_const}
\end{align}
\end{subequations}
where $H_t$ has the same form as \eqref{eq:nes_suff_existence_solution_2}. In Section~\ref{subsec:consistency}, we prove that the unique optimal solution to this optimization problem is indeed $(\Qtrue, \qtrue)$.

Nevertheless, the distribution of $\bar{\mfx}, \{ \mfw_t \}, \{ \rvv_t \}$ and $\mfN$ are usually not a priori known in practice, and hence the distribution of $\mfy_t$ and $\mfN$ are not known. 
Therefore, it is not possible to calculate the objective function \eqref{eq:stochastic_ioc_obj_rewriting} explicitly, and hence we cannot solve \eqref{eq:stochastic_IOC_opt_pro} directly. But since we have the optimal state trajectory observations $\{y_t^i\}_{t=1}^{\nu}$ of the agent's trials, i.e., realizations of I.I.D.~random processes $\{\mfy_t^i\}_{t=1}^{\nu}$ for $i=1:M$, we can instead empirically estimate the objective function.
To this end, let $M_N$ denote the number of observations which has a planning horizon of $N$ time steps. Clearly, $\sum_{N = 2}^{\nu} M_N = M$. 
Then, for each value of $N$, the expectation in \eqref{eq:stochastic_ioc_obj_rewriting} is approximated by the empirical mean as
\begin{align*}
&\mE_{\mfy_{\nu-N+1:\nu}|\mfN=N}[\tilde{\psi}_{N}(\cdot)]\approx  \\
&\frac{1}{M_N}\!\! \sum_{i_N  = 1}^{M_N}\Big[
\frac{1}{2}\mfy_{\nu}^{i_N T} P_{\nu} \mfy_{\nu}^{i_N}\!\! +\!\!\eta_{\nu}^T\mfy_{\nu}^{i_N} \!\!- \!\! \frac{1}{2}\mfy_{\nu-N+1}^{i_NT} P_{\nu-N+1}\mfy_{\nu-N+1}^{i_N} \nonumber\\
&\; -\eta_{\nu-N+1}^T\mfy_{\nu-N+1}^{i_N} - \frac{1}{2}\tr(P_{\nu}\Sigma_v)+\frac{1}{2}\tr(P_{\nu-N+1}\Sigma_v)\nonumber\\
&\;+\sum_{t=\nu-N+1}^{\nu-1}\Big(\frac{1}{2}\xi_t -\frac{1}{2}\tr(P_{t+1}dd^T)-\eta_{t+1}^Td + \frac{1}{2} \mfy_t^{i_NT} Q\mfy_t^{i_N}\nonumber \\
&\;  + q^T\mfy_t^{i_N} - \frac{1}{2}\tr(P_{t+1}\Sigma_w)-\frac{1}{2}\tr(Q\Sigma_v)\Big)\Big]\Big].
\end{align*}
On the other hand, approximating the expectation over $\mfN$ is the same as estimating the probabilities $\mP(\mfN=N)$ using the empirical estimates  $M_N/M$.
This, together with the above expression, gives that 
\begin{align}
&\Psi(Q, q,  P_{1:\nu}, \eta_{1:\nu},  \xi_{1:\nu-1})\approx\nonumber\\
& \Psi_E^{\mfy}(Q, q,  P_{1:\nu}, \eta_{1:\nu},  \xi_{1:\nu-1})=\frac{1}{M} \!\! \sum_{N = 2}^{\nu} \sum_{i_N  = 1}^{M_N} \Big[ \frac{1}{2}\mfy_{\nu}^{i_NT}P_{\nu}\mfy_{\nu}^{i_N}\nonumber\\
& + \eta_{\nu}^T\mfy_{\nu}^{i_N} -\frac{1}{2}\mfy_{\nu-N+1}^{i_N T}P_{\nu-N+1}\mfy_{\nu-N+1}^{i_N} -\eta_{\nu-N+1}^T\mfy_{\nu-N+1}^{i_N}  \nonumber\\
&- \frac{1}{2}\tr(P_{\nu}\Sigma_v)+\frac{1}{2}\tr(P_{\nu-N+1}\Sigma_v)+\sum_{t=\nu-N+1}^{\nu-1} \Big( \frac{1}{2}\xi_t   \nonumber\\
&- \frac{1}{2}\tr(P_{t+1}dd^T)- \eta_{t+1}^Td+ \frac{1}{2} \mfy_t^{i_N T}Q\mfy_t^{i_N}+ q^T\mfy_t^{i_N}  \nonumber\\
&  - \frac{1}{2}\tr(P_{t+1}\Sigma_w) -\frac{1}{2}\tr(Q\Sigma_v)\Big) \Big]. \label{eq:obj_fun_estimator}
\end{align}
We therefore consider the estimator
\begin{align}
\min_{\substack{(Q, q) \in \mathbb{G}(\varphi)\\ \{P_{t} \in \mathbb{S}^n_+(\varphi) \}_{t = 1:\nu},\\ \{ \eta_{t} \in \mathscr{B}^{n}_{\varphi}(0) \}_{t = 1:\nu},\\ \{ \xi_t \in\mathscr{B}^{1}_{\varphi}(0) \}_{t=1:\nu-1}}} & \; \Psi_E^{\mfy}(Q, q,  P_{1:\nu}, \eta_{1:\nu},  \xi_{1:\nu-1})\nonumber\\
\st & \;  \text{\eqref{eq:stochastic_IOC_opt_pro_first_const}--\eqref{eq:stochastic_IOC_opt_pro_last_const} hold.}\label{eq:stochastic_ioc_approximation}
\end{align}
In practice, an estimate is obtained by solving \eqref{eq:stochastic_ioc_approximation} for a given realization $\{ y_{1:\nu}^i \}_{i = 1}^M$ of $\{ \mfy_{1:\nu}^i \}_{i = 1}^M$. We will use the notation $\Psi_E^{\mfy}(\cdot)|_{\mfy = y}$ to denote the objective function at the given realization.

\subsection{Statistical consistency analysis}\label{subsec:consistency}
In this section,  we analyze the statistical consistency of the IOC algorithm.
To proceed, we first show that the optimization problem \eqref{eq:stochastic_IOC_opt_pro} is well-posed, i.e., the objective function \eqref{eq:stochastic_ioc_obj_rewriting} is bounded from below on its feasible domain \eqref{eq:stochastic_IOC_opt_pro_first_const}, \eqref{eq:stochastic_IOC_opt_pro_second_const} and \eqref{eq:stochastic_IOC_opt_pro_last_const}. 
In addition, we show the ``true" $(\Qtrue, \qtrue)$ is actually the unique global minimizer.

\begin{theorem}\label{thm:IOC_Q_optimal}
Let $(\bar{Q},\bar{q})\in\mscrF(I)$ be the ``true" parameters of the stochastic linear-quadratic control problem \eqref{eq:stochastic_forward_problem} that governs the agent, and let $\mfx_{1:\nu}$, $\mfu_{1:\nu}$, and $\mfy_{1:\nu}$ be distributed accordingly.
Under Assumptions~\ref{ass:controlability_and_full_rank}, \ref{ass:IID}, \ref{ass:planning_horizon}, \ref{ass:persistent_excitation}, and \ref{ass:bounded_parameter}, for any feasible solution $(Q,q,P_{1:\nu},\eta_{1:\nu},\xi_{1:\nu-1})$ of the optimization problem \eqref{eq:stochastic_IOC_opt_pro},  the objective function \eqref{eq:stochastic_ioc_obj_rewriting} is bounded from below by $- \sum_{t = 1}^{\nu-1}  \mE_{\mfx_t, \mfN}\left[\frac{1}{2} \|\mfu_t\|^2\right]$.
Moreover, for $\varphi$ that is large enough, $(\bar{Q}, \bar{q},\Ptrue_{1:\nu},\etatrue_{1:\nu},\xitrue_{1:\nu-1})$ is the unique globally optimal solution achieving the lower bound, where $\Ptrue_{1:\nu},\etatrue_{1:\nu}$ are generated by \eqref{eq:generalized_riccati_iterations} and $\xitrue_t=\gtrue_t^T\mfRtrue_t^\dagger\gtrue_t$, $t=1:\nu-1$.
\end{theorem}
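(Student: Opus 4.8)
The plan is to prove both assertions at once from a single exact identity that expresses $\Psi$, evaluated along the agent's optimal trajectory, as a sum of quadratic forms in the matrices $H_t$; boundedness, attainment and uniqueness then all follow. \emph{Step 1 — an exact identity valid at every feasible point.} I will repeat the derivation of Section~\ref{subsec:construction_and_approximation}, but now for an \emph{arbitrary} feasible tuple $(Q,q,P_{1:\nu},\eta_{1:\nu},\xi_{1:\nu-1})$ of \eqref{eq:stochastic_IOC_opt_pro}: importantly $P,\eta$ need not solve the Riccati recursion and $\xi_t$ need not equal $g_t^T\mfR_t^\dagger g_t$. The only facts used there are $\mE[\mfw_t]=\mE[\rvv_t]=0$, the independence in Assumption~\ref{ass:IID}, and cyclicity of the trace — none of which touches the Riccati structure. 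Expanding the $\mfw_t$-expectation in the per-step term and substituting $\mfR_t=B^TP_{t+1}B+I$, $\mfS_t=B^TP_{t+1}A$, $g_t=B^T(\eta_{t+1}+P_{t+1}d)$ and $\beta_t=q+A^TP_{t+1}d+A^T\eta_{t+1}-\eta_t$, one checks that this term collapses exactly to $\tfrac12 z_t^T H_t z_t$, where $z_t:=[\mfu_t^T,\mfx_t^T,1]^T$ and $H_t$ is the matrix in \eqref{eq:nes_suff_existence_solution_2} built from $(Q,q,P,\eta,\xi)$. Summing over $t$ and $N$ and using $\mfu_t=0$ for $t=1:\nu-N$ gives
\[
\Psi(\cdot)+\sum_{t=1}^{\nu-1}\mE_{\mfx_t,\mfN}\Big[\tfrac12\|\mfu_t\|^2\Big]=\sum_{N=2}^{\nu}\mP(\mfN=N)\!\!\sum_{t=\nu-N+1}^{\nu-1}\!\!\mE_{\mfx_t\mid\mfN=N}\Big[\tfrac12\, z_t^T H_t z_t\Big].
\]

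\emph{Step 2 — lower bound and attainment.} Feasibility forces $H_t\succeq0$, so each $z_t^TH_tz_t\ge0$, the right-hand side is nonnegative, and hence $\Psi(\cdot)\ge-\sum_{t=1}^{\nu-1}\mE_{\mfx_t,\mfN}[\tfrac12\|\mfu_t\|^2]$, which is the first claim. For the tuple $(\Qtrue,\qtrue,\Ptrue_{1:\nu},\etatrue_{1:\nu},\xitrue_{1:\nu-1})$ we have $H_t=\Htrue_t\succeq0$ by Theorem~\ref{thm:indefinite_LQR}; moreover $\mfu_t$ is the agent's optimal control and so attains the minimum in the HJBE \eqref{eq:stochastic_dyn_prog}, whence the per-step violation — which by Step~1 equals $\tfrac12 z_t^T\Htrue_t z_t$ — vanishes along the trajectory. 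Thus the right-hand side is $0$ and the lower bound is attained at this tuple. Its feasibility for $\varphi$ large enough follows from Assumption~\ref{ass:bounded_parameter} (which places $(\Qtrue,\qtrue)$ in $\mathbb{G}(\varphi)$) together with boundedness of the remaining Riccati-generated quantities and $\Ptrue_t\succeq0$ (the value function $V_t$ is bounded from below, Theorem~\ref{thm:indefinite_LQR}(4)).

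\emph{Step 3 — uniqueness.} Let $(Q,q,P,\eta,\xi)$ be feasible and achieve the lower bound. Then the right-hand side of the identity is $0$; since every summand is nonnegative, $\mE_{\mfx_t\mid\mfN=N}[z_t^TH_tz_t]=0$ for every $t$ and every $N$ with $\mP(\mfN=N)>0$ — in particular, taking $N=\nu$ (positive probability by Assumption~\ref{ass:planning_horizon}), for all $t=1:\nu-1$. Writing the agent's unique optimal control (Corollary~\ref{corr:no_nullspace_component}, as $\Rtrue=I\succ0$) as $\mfu_t=\Ktrue_t\mfx_t+\bar{k}_t$ with $\Ktrue_t=-\mfRtrue_t^{-1}\mfStrue_t$ and $\bar{k}_t=-\mfRtrue_t^{-1}\gtrue_t$, we have $z_t=L_t\tilde{\mfx}_t$ with $\tilde{\mfx}_t:=[\mfx_t^T,1]^T$ and $L_t$ the matrix with block rows $[\Ktrue_t\ \ \bar{k}_t]$, $[I\ \ 0]$, $[0\ \ 1]$. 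Hence $\tr\!\big(L_t^TH_tL_t\,\mE_{\mfx_t\mid\mfN=\nu}[\tilde{\mfx}_t\tilde{\mfx}_t^T]\big)=0$, and since $\mE_{\mfx_t\mid\mfN=\nu}[\tilde{\mfx}_t\tilde{\mfx}_t^T]\succ0$ by Lemma~\ref{lem:stochastic_persistent_excitation} while $L_t^TH_tL_t\succeq0$, this forces $L_t^TH_tL_t=0$ and hence $H_tL_t=0$. The key observation now is that $\mfR_t=B^TP_{t+1}B+I\succ0$ on the \emph{entire} feasible region, because $P_{t+1}\succeq0$. Reading off the block rows of $H_tL_t=0$ and using this invertibility: the top block gives $\Ktrue_t=-\mfR_t^{-1}\mfS_t$ and $\bar{k}_t=-\mfR_t^{-1}g_t$; the middle block then forces $P_t=A^TP_{t+1}A+Q-\mfS_t^T\mfR_t^{-1}\mfS_t$ and $\eta_t=q+A^T(P_{t+1}d+\eta_{t+1})-\mfS_t^T\mfR_t^{-1}g_t$, i.e.\ $(Q,q,P,\eta)$ satisfies the Riccati recursion \eqref{eq:generalized_riccati_iterations} with terminal data $P_\nu=Q$, $\eta_\nu=q$; the bottom block gives $\xi_t=g_t^T\mfR_t^{-1}g_t=g_t^T\mfR_t^\dagger g_t$. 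Consequently $(Q,q)\in\mscrF(I)$ (Theorem~\ref{thm:indefinite_LQR}), and $\Acl(t;Q)=A-B\mfR_t^{-1}\mfS_t=A+B\Ktrue_t=\Acl(t;\Qtrue)$, so $\tilde{A}_{cl}(t;Q,q)=\tilde{A}_{cl}(t;\Qtrue,\qtrue)$ for all $t=1:\nu-1$. Proposition~\ref{prop:structural_identifiability} then yields $(Q,q)=(\Qtrue,\qtrue)$, after which the Riccati recursion forces $P=\Ptrue$, $\eta=\etatrue$ and $\xi=\xitrue$; the minimizer is therefore unique.

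\emph{Anticipated main obstacle.} The routine-but-delicate part is Step~1: tracking all the trace and noise constants so that the per-step term genuinely collapses to $\tfrac12 z_t^TH_tz_t$ for an arbitrary feasible $(P,\eta,\xi)$. The conceptual crux is Step~3, with two moving parts: (i) persistent excitation, concretely $\mE_{\mfx_t\mid\mfN=\nu}[\tilde{\mfx}_t\tilde{\mfx}_t^T]\succ0$ from Lemma~\ref{lem:stochastic_persistent_excitation}, is exactly what upgrades ``expected violation $=0$'' to the pointwise identity $H_tL_t=0$; and (ii) the fact that $\mfR_t\succ0$ holds throughout the feasible set is what allows inverting $\mfR_t$, reconstructing the full Riccati recursion from $H_tL_t=0$, and thereby reducing uniqueness to the already-established identifiability result in Proposition~\ref{prop:structural_identifiability}.
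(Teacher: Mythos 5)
Your Steps 1 and 2 reproduce the paper's argument essentially verbatim: the exact identity $\Psi(\cdot)+\sum_{t}\mE_{\mfx_t,\mfN}[\tfrac12\|\mfu_t\|^2]=\sum_N\mP(\mfN=N)\sum_t\mE_{\mfx_t\mid\mfN=N}[\tfrac12 z_t^TH_tz_t]$ for an arbitrary feasible point, the lower bound from $H_t\succeq0$, and attainment at the true tuple via $\mfRtrue_t\mfu_t+\mfStrue_t\mfx_t+\gtrue_t=0$. The first half of your Step 3 (persistent excitation upgrading $\mE[z_t^TH_tz_t]=0$ to a pointwise algebraic identity, then reduction to Proposition~\ref{prop:structural_identifiability}) is also the paper's strategy. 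The gap is your ``key observation'' that $\mfR_t=B^TP_{t+1}B+I\succ0$ on the entire feasible region \emph{because $P_{t+1}\succeq0$}, together with the companion claim in Step 2 that $\Ptrue_t\succeq0$ because $V_t$ is bounded from below. Theorem~\ref{thm:indefinite_LQR}(4) only asserts that $V_t(\chi_t)$ is finite for each fixed $\chi_t$ (the inner minimization over controls is bounded), not that $V_t$ is bounded below as a function of $\chi_t$; in the indefinite setting $\Ptrue_t$ need not be positive semi-definite — in the paper's own numerical example $\Ptrue_\nu=\Qtrue=-0.1I\prec0$. So either the positivity constraint on $P_t$ is taken literally, in which case the true tuple is \emph{infeasible} for indefinite $\Qtrue$ no matter how large $\varphi$ is and the attainment/uniqueness claims become vacuous, or (as the paper's own proof and example make clear is intended) only a norm bound on symmetric $P_t$ is imposed, in which case a feasible, and a priori optimal, point may have $\mfR_t\succeq0$ singular. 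Either way your argument does not cover the case the theorem is actually about.

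With $\mfR_t$ possibly singular you cannot invert it: $H_tL_t=0$ only gives $\mfR_t\Ktrue_t+\mfS_t=0$, which determines the candidate gain $-\mfR_t^\dagger\mfS_t$ up to $\ker(\mfR_t)$, and the identity $\tilde A_{cl}(t;Q,q)=\tilde A_{cl}(t;\Qtrue,\qtrue)$ (and hence the appeal to Proposition~\ref{prop:structural_identifiability}) no longer follows by the route you describe. This degenerate case is precisely what the paper spends the last third of its proof on: it first shows $H_t^\star\backslash\mfR_t^\star=0$ (so the Riccati recursion holds, cf.~Remark~\ref{rem:rank_psd_and_riccati}), establishes uniqueness on the subregion where $\mfR_t\succ0$ for all $t$, and then rules out optimal solutions with singular $\mfR_t^\star$ by convexity: the set of minimizers of the convex problem \eqref{eq:stochastic_IOC_opt_pro} is convex, so the segment joining such a minimizer to $(\Qtrue,\qtrue,\Ptrue_{1:\nu},\etatrue_{1:\nu},\xitrue_{1:\nu-1})$ consists of minimizers, and for $\alpha$ close enough to $1$ the interpolant has $\mfR_t^\alpha\succ0$ for all $t$, contradicting uniqueness on that subregion. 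You need this argument (or a substitute, e.g.\ carrying out the $H_tL_t=0$ algebra with $\mfR_t^\dagger$ and the kernel containments implied by $H_t\succeq0$ to conclude $(Q,q)\in\mscrF(I)$, whence $\mfR_t\succ0$ by Proposition~\ref{prop:mfR_pd}) to complete the uniqueness claim.
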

\begin{proof}
See Appendix.
\end{proof}

Having shown that the optimization problem \eqref{eq:stochastic_IOC_opt_pro} has $(\Qtrue, \qtrue)$ as unique globally optimal solution, next we turn to the estimator  \eqref{eq:stochastic_ioc_approximation}. We show that it is statistically consistent, but to this end we first have the following Lemmas.

\begin{lemma}[Boundedness of estimator]\label{lem:bounded_domain_and_cost}
The feasible region in problem \eqref{eq:stochastic_ioc_approximation} is compact. Moreover, for any realization, the cost function $\Psi_E^{\mfy}(Q, q,  P_{1:\nu}, \eta_{1:\nu},  \xi_{1:\nu-1})|_{\mfy=y}$ is bounded on the feasible region, and the optimization problem \eqref{eq:stochastic_ioc_approximation} is convex and admits an optimal solution.
\end{lemma}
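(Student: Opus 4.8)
The plan is to verify the three assertions — compactness of the feasible region, boundedness of the cost, and convexity together with attainment — by elementary convex-analysis arguments, the only mildly delicate point being the affine dependence of the matrices $H_t$ on all of the decision variables.

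\emph{Compactness and nonemptiness of the feasible region.} First I would note that the feasible set of \eqref{eq:stochastic_ioc_approximation} is contained in the box $\mathbb{G}(\varphi)\times(\mathbb{S}^n_+(\varphi))^{\nu}\times(\bar{\mathscr{B}}^{n}_{\varphi}(0))^{\nu}\times(\bar{\mathscr{B}}^{1}_{\varphi}(0))^{\nu-1}$ (with the norm-ball constraints read as non-strict), which is a finite product of closed bounded subsets of finite-dimensional spaces and hence compact. The feasible set is the intersection of this box with the solution set of the affine equalities \eqref{eq:stochastic_IOC_opt_pro_first_const}--\eqref{eq:stochastic_IOC_opt_pro_second_const} and of the linear matrix inequalities \eqref{eq:stochastic_IOC_opt_pro_last_const}. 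The equalities plainly define a closed set. For the LMIs, observe that the blocks of $H_t$, namely $\mfR_t=B^TP_{t+1}B+I$, $\mfS_t=B^TP_{t+1}A$, $g_t=B^T\eta_{t+1}+B^TP_{t+1}d$, $\beta_t=q+A^TP_{t+1}d+A^T\eta_{t+1}-\eta_t$, and $A^TP_{t+1}A+Q-P_t$, are all affine in $(Q,q,P_{1:\nu},\eta_{1:\nu},\xi_{1:\nu-1})$; therefore the map sending a decision tuple to $H_t$ is affine, hence continuous, so $\{H_t\succeq0\}$ is closed (and, being the preimage of the positive semidefinite cone under an affine map, also convex). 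Consequently the feasible set is closed and bounded, i.e., compact by Heine--Borel. It is nonempty, since $Q=0$, $q=0$, $P_t=0$, $\eta_t=0$, $\xi_t=0$ satisfies the box and the equalities and yields $H_t=\diag(I,0,0)\succeq0$.

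\emph{Boundedness of the cost.} For a fixed realization $\mfy=y$, inspection of the defining expression of $\Psi_E^{\mfy}$ shows that $\Psi_E^{\mfy}(\cdot)|_{\mfy=y}$ is affine (in fact linear) in the decision variables: every summand is of the form $\frac{1}{2}(y_t^i)^TQy_t^i$, $q^Ty_t^i$, $\pm\frac{1}{2}(y_t^i)^TP_sy_t^i$, $\pm\eta_s^Ty_t^i$, $\frac{1}{2}\xi_t$, or the trace of one of the variable matrices against a fixed matrix. An affine function is continuous, and a continuous function attains finite extrema on a compact set; hence $\Psi_E^{\mfy}(\cdot)|_{\mfy=y}$ is bounded on the feasible region.

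\emph{Convexity and existence of a minimizer.} The objective is affine, hence convex. Each box constraint is convex: $\mathbb{G}(\varphi)$ and the balls $\bar{\mathscr{B}}^{n}_{\varphi}(0)$, $\bar{\mathscr{B}}^{1}_{\varphi}(0)$ are preimages under linear maps of norm balls, and $\mathbb{S}^n_+(\varphi)$ is the intersection of the positive semidefinite cone with a ball; the equality constraints are affine; and $\{H_t\succeq0\}$ is convex by the affineness of $H_t$ noted above. Thus \eqref{eq:stochastic_ioc_approximation} minimizes a convex function over a convex set, so it is a convex program — in fact a semidefinite program. Finally, since the feasible region is nonempty and compact and the objective is continuous, the Weierstrass extreme value theorem guarantees that the minimum is attained, i.e., an optimal solution exists. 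There is no genuine obstacle here; the only step requiring attention is the verification that $H_t$ is affine in all the decision variables, which makes \eqref{eq:stochastic_IOC_opt_pro_last_const} simultaneously closed and convex, and this is immediate from the formulas for $\mfR_t,\mfS_t,g_t,\beta_t$ recalled above.
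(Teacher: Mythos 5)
Your proof is correct and follows essentially the same route as the paper's: the feasible set is a bounded intersection of closed convex sets (the norm box, the affine equalities, and the LMIs, with each $H_t$ affine in the decision variables), the objective $\Psi_E^{\mfy}(\cdot)|_{\mfy=y}$ is linear, and Weierstrass' theorem gives attainment on the compact feasible region. You are in fact slightly more careful than the paper — explicitly verifying nonemptiness via the all-zeros point (for which $H_t=\diag(I,0,0)\succeq 0$) and noting that the ball constraints on $\eta_t$ and $\xi_t$ must be read as closed for compactness to hold literally — but these are refinements of, not departures from, the same argument.
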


\begin{proof}
It has already been established that the feasible region is convex, see the arguments in Section~\ref{subsec:construction_and_approximation}.
Moreover, the feasible region is bounded by construction. Next, for any realization,  $\Psi_E^{\mfy}(Q, q,  P_{1:\nu}, \eta_{1:\nu},  \xi_{1:\nu-1})|_{\mfy=y}$ is a linear function, and hence \eqref{eq:stochastic_ioc_approximation} is a convex problem. Moreover, since the cost function is linear, it is bounded on the compact feasible domain. Finally, by Weierstrass' theorem  (see, e.g., \citep[Prop.~A.8]{bertsekas1999nonlinear}) the optimization problem \eqref{eq:stochastic_ioc_approximation} admits an optimal solution.
\end{proof}

\begin{lemma}[Uniform law of large numbers]\label{lem:ulln}
For large enough $\varphi$ and under Assumptions \ref{ass:controlability_and_full_rank}, \ref{ass:IID}, \ref{ass:planning_horizon}, \ref{ass:persistent_excitation}, and \ref{ass:bounded_parameter}, the optimal value
\begin{align*}
\sup_{\substack{(Q, q) \in \mathbb{G}(\varphi)\\ \{P_{t} \in \mathbb{S}^n_+(\varphi) \}_{t = 1:\nu},\\ \{ \eta_{t} \in \mathscr{B}^{n}_{\varphi}(0) \}_{t = 1:\nu},\\ \{ \xi_t \in\mathscr{B}^{1}_{\varphi}(0) \}_{t=1:\nu-1}}}
& \; | \Psi_E^{\mfy}(\cdot) - \Psi(\cdot) | \\
\st \quad
& \;  \text{\eqref{eq:stochastic_IOC_opt_pro_first_const}--\eqref{eq:stochastic_IOC_opt_pro_last_const} hold.}
\end{align*}
converges to $0$ almost surely as $M \to \infty$.
\end{lemma}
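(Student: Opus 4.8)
The plan is to reduce the claimed uniform convergence to an ordinary strong law of large numbers, exploiting that the objective is affine — indeed homogeneous linear — in the decision variables. The first step is to collect the decision variables into a single vector $\theta := (Q,q,P_{1:\nu},\eta_{1:\nu},\xi_{1:\nu-1})$ and to write $\Theta(\varphi)$ for the feasible region of \eqref{eq:stochastic_ioc_approximation}, i.e.\ the set carved out by the norm bounds together with \eqref{eq:stochastic_IOC_opt_pro_first_const}--\eqref{eq:stochastic_IOC_opt_pro_last_const}. By Lemma~\ref{lem:bounded_domain_and_cost} the set $\Theta(\varphi)$ is compact, so $B_\varphi := \sup_{\theta\in\Theta(\varphi)}\|\theta\| < \infty$. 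Re-indexing the double sum in $\Psi_E^{\mfy}$ by grouping the $M$ trials according to their observed horizon shows that $\Psi_E^{\mfy}(\theta) = \frac{1}{M}\sum_{i=1}^M \tilde{\psi}_{N_i}(\theta;\mfy^i_{1:\nu})$, an empirical average over the I.I.D.\ trials $(\mfy^i_{1:\nu},N_i)$, $i=1:M$, whereas by the tower property for the conditioning on $\mfN$ one has $\Psi(\theta) = \mE[\tilde{\psi}_{\mfN}(\theta;\mfy_{1:\nu})]$. The crucial structural observation is that, for each fixed $y$ and $N$, the map $\theta\mapsto\tilde{\psi}_N(\theta;y)$ is linear, with coefficients that are polynomials of degree at most two in $y_{1:\nu}$ and bounded uniformly over $N\in\{2,\ldots,\nu\}$. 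Hence $\Psi_E^{\mfy}(\theta) = \langle a_M,\theta\rangle$ and $\Psi(\theta) = \langle a,\theta\rangle$, where $a_M$ is a fixed-dimensional vector whose coordinates are empirical averages of I.I.D.\ coefficient random variables, and $a = \mE[a_M]$ is the vector of their means.

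The second step is the integrability bookkeeping needed to invoke the strong law. Lemma~\ref{lem:stochastic_persistent_excitation} gives $\mE[\|\tilde{\mfx}_t\|^2] < \infty$, hence $\mE[\|\mfx_t\|^2] < \infty$, for $t=1:\nu$; combining this with Assumption~\ref{ass:IID} — so that $\rvv_t$ is zero-mean with finite covariance $\Sigma_v$ and independent of $\mfx_t$ — and $\mfy_t = \mfx_t + \rvv_t$ yields $\mE[\|\mfy_t\|^2] = \mE[\|\mfx_t\|^2] + \tr(\Sigma_v) < \infty$ for all $t=1:\nu$. Since each coordinate of $a_M$ is an empirical average of I.I.D.\ random variables of the form $f(\mfy^i_{1:\nu},N_i)$ with $f$ a polynomial of degree at most two in $y$ (with coefficients bounded over $N$), this second-moment bound makes every such summand integrable; equivalently, $\mE\big[\sup_{\theta\in\Theta(\varphi)}|\tilde{\psi}_{\mfN}(\theta;\mfy_{1:\nu})|\big] \le C_\varphi\big(1 + \sum_{t=1}^\nu\mE[\|\mfy_t\|^2]\big) < \infty$ for a constant $C_\varphi$ depending only on $\varphi$, $\nu$, $n$, and the known quantities $A,B,d,\Sigma_w,\Sigma_v$.

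The third step applies Kolmogorov's strong law of large numbers coordinate-wise: the trials are I.I.D.\ and each coordinate of $a_M$ is integrable, so each coordinate converges almost surely to the corresponding coordinate of $a$, and since there are only finitely many coordinates, $a_M \to a$ almost surely on a single probability-one event. On that event,
\begin{equation*}
\sup_{\theta\in\Theta(\varphi)}\big|\Psi_E^{\mfy}(\theta) - \Psi(\theta)\big| \;=\; \sup_{\theta\in\Theta(\varphi)}\big|\langle a_M - a,\theta\rangle\big| \;\le\; \|a_M - a\|\,B_\varphi \;\longrightarrow\; 0,
\end{equation*}
which is exactly the assertion of the lemma (and it holds verbatim for every fixed $\varphi < \infty$, not merely for $\varphi$ large). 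If one prefers not to assume the horizon is observed, the same conclusion follows by keeping the split over $\mfN$ in $\Psi$ and additionally using $M_N/M \to \mP(\mfN=N)$ a.s.

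I do not anticipate a genuine obstacle here. The only substantive input is the second-moment bound $\mE[\|\mfy_t\|^2] < \infty$, which is where Lemma~\ref{lem:stochastic_persistent_excitation} — and through it Assumption~\ref{ass:persistent_excitation} — together with the finiteness of $\Sigma_v$ enter. The step that is usually the delicate part of a uniform law of large numbers, namely promoting pointwise almost-sure convergence to uniform almost-sure convergence (which in general requires a covering/bracketing or stochastic-equicontinuity argument), is sidestepped entirely because $\theta\mapsto\Psi_E^{\mfy}(\theta) - \Psi(\theta)$ is linear and $\Theta(\varphi)$ is compact, so the supremum is dominated by the single Euclidean quantity $\|a_M - a\|\,B_\varphi$.
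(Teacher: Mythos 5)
Your proof is correct, and it takes a genuinely different (and more elementary) route than the paper. The paper's proof constructs an integrable envelope function for the integrand -- a bound independent of the parameters, obtained from the second-moment estimates in Lemma~\ref{lem:stochastic_persistent_excitation} and the compactness in Lemma~\ref{lem:bounded_domain_and_cost} -- and then invokes Jennrich's general uniform strong law of large numbers \citep[Thm.~2]{jennrich1969asymptotic}, which handles the passage from pointwise to uniform convergence via continuity in the parameters on a compact set. You instead exploit the fact that $\tilde{\psi}_N(\,\cdot\,;y)$ is linear in $\theta = (Q,q,P_{1:\nu},\eta_{1:\nu},\xi_{1:\nu-1})$ with coefficients that are at most quadratic in $y_{1:\nu}$ and measurable in $(y,N)$, so that $\Psi_E^{\mfy}(\theta)=\langle a_M,\theta\rangle$ and $\Psi(\theta)=\langle a,\theta\rangle$ for a fixed-dimensional coefficient vector $a_M$ whose entries are empirical averages of I.I.D.\ integrable random variables $a(\mfy^i_{1:\nu},N_i)$; the coordinate-wise Kolmogorov strong law plus Cauchy--Schwarz and compactness of the feasible set then give the uniform statement directly, with no stochastic-equicontinuity or covering argument needed. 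Both proofs rest on exactly the same integrability input -- $\mE[\|\mfy_t\|^2]<\infty$, which follows from $\mE[\|\tilde{\mfx}_t\|^2]<\infty$ in Lemma~\ref{lem:stochastic_persistent_excitation} together with the independence and finite covariance of $\rvv_t$ in Assumption~\ref{ass:IID} -- and on compactness of the feasible region from Lemma~\ref{lem:bounded_domain_and_cost}. What your route buys is a fully self-contained elementary argument and the observation (correct, and worth keeping) that the conclusion holds for every finite $\varphi$, not only for $\varphi$ large; what the paper's route buys is robustness, since Jennrich's theorem would still apply if the objective were merely continuous rather than linear in the parameters. Your bookkeeping of the I.I.D.\ structure (treating the pair $(\mfy^i_{1:\nu},N_i)$ as the I.I.D.\ datum, so that the coefficient vectors $a(\mfy^i_{1:\nu},N_i)$ are themselves I.I.D.) is exactly right and is the one place where a careless version of this argument could go wrong.
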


\begin{proof}
The proof follows along the lines of \citep[Proof of Lem.~4.2]{zhang2021inverse}. First, the argument inside the expectation in $\Psi(\cdot)$ can be bounded from above by an integrable function of the random variables, but which is independent of the parameters $(Q, q,  P_{1:\nu}, \eta_{1:\nu},  \xi_{1:\nu-1})$. This can be done using bounds from Lemmas~\ref{lem:stochastic_persistent_excitation} and \ref{lem:bounded_domain_and_cost}. Using this bound in terms of an integrable function, the result follows from \citep[Thm.~2]{jennrich1969asymptotic}.
\end{proof}

\begin{theorem}[Statistical consistency]\label{thm:statistical_consistency}
For large enough $\varphi$ and under Assumptions \ref{ass:controlability_and_full_rank}, \ref{ass:IID}, \ref{ass:planning_horizon}, \ref{ass:persistent_excitation}, and \ref{ass:bounded_parameter}, given a realization of $M$ trajectories, let $(Q^{M},q^{M}, P_{1:\nu}^{M}, \eta_{1:\nu}^{M},  \xi_{1:\nu-1}^{M})$ be a corresponding optimal solution to \eqref{eq:stochastic_ioc_approximation}. Then $Q^{M} \overset{p}\rightarrow \Qtrue$ and $q^{M} \overset{p}\rightarrow \qtrue$ as $M\rightarrow \infty$.
\end{theorem}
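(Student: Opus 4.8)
The plan is to treat \eqref{eq:stochastic_ioc_approximation} as an M-estimation problem and run the classical consistency argument for extremum estimators, in the same spirit as \citep[Sec.~4]{zhang2021inverse}, but phrased so that it also covers the case where the empirical minimizer is not unique. Collect the decision variables into $\theta := (Q, q, P_{1:\nu}, \eta_{1:\nu}, \xi_{1:\nu-1})$, let $\Theta$ denote the (common) feasible set of \eqref{eq:stochastic_IOC_opt_pro} and \eqref{eq:stochastic_ioc_approximation}, and write $\bar\theta := (\Qtrue, \qtrue, \Ptrue_{1:\nu}, \etatrue_{1:\nu}, \xitrue_{1:\nu-1})$. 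By Lemma~\ref{lem:bounded_domain_and_cost}, $\Theta$ is compact and, for every realization, the empirical objective $\Psi_E^{\mfy}(\cdot)|_{\mfy=y}$ attains a minimum over $\Theta$; the given $\theta^M := (Q^M, q^M, P_{1:\nu}^M, \eta_{1:\nu}^M, \xi_{1:\nu-1}^M)$ is one such minimizer. By Theorem~\ref{thm:IOC_Q_optimal}, for $\varphi$ large enough the population objective $\Psi$ has $\bar\theta$ as its \emph{unique} minimizer over $\Theta$, and $\bar\theta \in \Theta$.

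The first step is to upgrade uniqueness of the population minimizer to a well-separated-minimum statement: for every $\varepsilon>0$,
\[
\delta(\varepsilon) := \inf\{\,\Psi(\theta)-\Psi(\bar\theta) : \theta\in\Theta,\ \|\theta-\bar\theta\|\ge\varepsilon\,\}>0 .
\]
Indeed, $\Psi$ is continuous (in fact linear in $\theta$ for a fixed data distribution, and the expectations are finite by Lemma~\ref{lem:stochastic_persistent_excitation}), the set $\{\theta\in\Theta:\|\theta-\bar\theta\|\ge\varepsilon\}$ is compact, so the infimum is attained at some $\theta_0$, and $\Psi(\theta_0)>\Psi(\bar\theta)$ because $\bar\theta$ is the unique minimizer on $\Theta$.

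The second step is the usual sandwich estimate. Fix $\varepsilon>0$ and work on the event that $\sup_{\theta\in\Theta}|\Psi_E^{\mfy}(\theta)-\Psi(\theta)|<\delta(\varepsilon)/3$. Then
\begin{align*}
\Psi(\theta^M) &\le \Psi_E^{\mfy}(\theta^M)+\tfrac{\delta(\varepsilon)}{3}\le \Psi_E^{\mfy}(\bar\theta)+\tfrac{\delta(\varepsilon)}{3}\\
&\le \Psi(\bar\theta)+\tfrac{2\delta(\varepsilon)}{3}<\Psi(\bar\theta)+\delta(\varepsilon),
\end{align*}
where the middle inequality uses that $\theta^M$ minimizes $\Psi_E^{\mfy}(\cdot)|_{\mfy=y}$ over $\Theta$ and that $\bar\theta\in\Theta$. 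By the definition of $\delta(\varepsilon)$ this forces $\|\theta^M-\bar\theta\|<\varepsilon$. By Lemma~\ref{lem:ulln} the uniform error tends to $0$ almost surely, so the event above eventually holds almost surely; since $\varepsilon$ was arbitrary, $\theta^M\to\bar\theta$ almost surely, hence in probability, and projecting onto the $(Q,q)$-coordinates gives $Q^M\overset{p}\rightarrow\Qtrue$ and $q^M\overset{p}\rightarrow\qtrue$.

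I expect the only genuinely delicate point — beyond the uniform law of large numbers, which is already handled in Lemma~\ref{lem:ulln} — to be the non-uniqueness of optimizers of the linear empirical problem \eqref{eq:stochastic_ioc_approximation}: the optimal set can be a whole face of the feasible region, so $\theta^M$ must be regarded as an arbitrary (measurable) selection, and the argument above is deliberately written to apply to any such selection. The strict positivity $\delta(\varepsilon)>0$ is the place where the uniqueness conclusion of Theorem~\ref{thm:IOC_Q_optimal} — and thus, ultimately, the persistent-excitation Assumption~\ref{ass:persistent_excitation} through Lemma~\ref{lem:stochastic_persistent_excitation} — is indispensable; without it a sequence of empirical minimizers could drift along a ridge of near-minimizers of $\Psi$ without converging.
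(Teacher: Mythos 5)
Your proposal is correct and is essentially the same argument as the paper's: the paper verifies the hypotheses of the classical consistency theorem for M-estimators (\citep[Thm.~5.7]{van1998asymptotic}) using Lemma~\ref{lem:ulln} for uniform convergence, Theorem~\ref{thm:IOC_Q_optimal} for uniqueness of the population minimizer, and compactness from Lemma~\ref{lem:bounded_domain_and_cost} to get the well-separated-minimum condition, whereas you have simply inlined the proof of that cited theorem (well-separation via compactness and continuity, followed by the sandwich estimate). The only addition worth noting is your explicit handling of non-unique empirical minimizers via an arbitrary measurable selection, which the paper leaves implicit.
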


\begin{proof}
The result follows by verifying the conditions in \citep[Thm.~5.7]{van1998asymptotic}. In particular, since \eqref{eq:stochastic_ioc_approximation} is convex,  $(Q^{M},q_{1:\nu}^{M}, P_{1:\nu}^{M}, \eta_{1:\nu}^{M},  \xi_{1:\nu-1}^{M})$ is a globally optimal solution. This means that
\begin{align*}
& \Psi_E^{\mfy}(Q^{M},q^{M}, P_{1:\nu}^{M}, \eta_{1:\nu}^{M},  \xi_{1:\nu-1}^{M})|_{\mfy=y} \\
& \leq \Psi_E^{\mfy}(\Qtrue, \qtrue, \Ptrue_{1:\nu}, \etatrue_{1:\nu},  \xitrue_{1:\nu-1})|_{\mfy=y}.
\end{align*}
Moreover, since convergence almost surely implies convergence in probability \citep[Lem.~3.2]{kallenberg1997foundations}, Lemma~\ref{lem:ulln} implies that 
\begin{align*}
\sup_{\substack{(Q, q) \in \mathbb{G}(\varphi)\\ \{P_{t} \in \mathbb{S}^n_+(\varphi) \}_{t = 1:\nu},\\ \{ \eta_{t} \in \mathscr{B}^{n}_{\varphi}(0) \}_{t = 1:\nu},\\ \{ \xi_t \in\mathscr{B}^{1}_{\varphi}(0) \}_{t=1:\nu-1}}}
& \; | \Psi_E^{\mfy}(\cdot) - \Psi(\cdot) | \\
\st \quad
& \;  \text{\eqref{eq:stochastic_IOC_opt_pro_first_const}--\eqref{eq:stochastic_IOC_opt_pro_last_const} hold.}
\end{align*}
converges to $0$ in probability as $M \to \infty$.
Finally, the fact that the feasible region to \eqref{eq:stochastic_IOC_opt_pro} and \eqref{eq:stochastic_ioc_approximation} is compact (see Lemma~\ref{lem:bounded_domain_and_cost}), and that \eqref{eq:stochastic_IOC_opt_pro} has a unique optimal solution (see Theorem~\ref{thm:IOC_Q_optimal}),  by \citep[p.~46]{van1998asymptotic} the last condition also holds. Hence, the result follows.
\end{proof}

\subsection{On implementation and the computational complexity of the estimator}
To get a point estimate from the estimator \eqref{eq:stochastic_ioc_approximation}, the data (i.e., the observed trajectories) are used in the optimization problem \eqref{eq:stochastic_ioc_approximation}. This problem can be solved using any appropriate method for solving the convex optimization problem. In fact, even for modest sized problems, one can use many off-the-shelf convex optimization solvers. For example, problem  \eqref{eq:stochastic_ioc_approximation} can be implemented, almost exactly as stated, in frameworks for disciplined convex programming, including YALMIP \citep{Lofberg2004}, CVX \citep{gb08, cvx}, and CVXPY \citep{diamond2016cvxpy, agrawal2018rewriting}.

The only difference between an implementation in a framework for disciplined convex programming and the stated problem in \eqref{eq:stochastic_ioc_approximation}, is that the cost function can be rewritten to make the implementation more efficient. To this end, observe that for any $Z \in \mathbb{S}^n$ and any $a \in \mR^n$, $a^T Z a = \tr (Z aa^T)$. This means that the objective function \eqref{eq:obj_fun_estimator} can be rewritten as
\begin{align*}
& \Psi_E^{\mfy}(Q, q,  P_{1:\nu}, \eta_{1:\nu},  \xi_{1:\nu-1})= \frac{1}{M}  \!\! \sum_{N = 2}^{\nu} \Big[  \frac{1}{2} \tr ( P_{\nu} \mfY_\nu^{(N)}) + \eta_{\nu}^T \mfy_\nu^{(N)} \nonumber\\
&  -\frac{1}{2}  \tr ( P_{\nu-N+1} \mfY_{\nu-N+1}^{(N)} ) -\eta_{\nu-N+1}^T\mfy_{\nu-N+1}^{(N)} - \frac{M_N}{2} \tr(P_{\nu}\Sigma_v)  \nonumber\\
& +  \frac{M_N}{2}\tr(P_{\nu-N+1}\Sigma_v)+  \sum_{t=\nu-N+1}^{\nu-1} \Big( \frac{M_N}{2}\xi_t - \frac{M_N}{2}\tr(P_{t+1}dd^T)  \nonumber\\
&- M_N \eta_{t+1}^Td+ \frac{1}{2}  \tr ( Q\mfY_t^{(N)} )+ q^T\mfy_t^{(N)}  \nonumber\\
&  - \frac{M_N}{2}\tr(P_{t+1}\Sigma_w) -\frac{M_N}{2}\tr(Q\Sigma_v)\Big) \Big],
\end{align*}
where  $\mfy_t^{(N)} =  \sum_{i_N  = 1}^{M_N} \mfy_{t}^{i_N}$ and $\mfY_t^{(N)} =  \sum_{i_N  = 1}^{M_N} \mfy_{t}^{i_N}  (\mfy_{t}^{i_N})^T$.
Note that $\mfy_t^{(N)}$ and $\mfY_t^{(N)}$ are collecting all the samples at time-point $t$ from trajectories with the same planning horizon length $N$, and that these can be pre-computed from the data before assembling the optimization problem \eqref{eq:stochastic_ioc_approximation}. Moreover, the sizes of $\mfy_t^{(N)}$ and $\mfY_t^{(N)}$ only depend on the dimension of the state space, $n$. It means that the size of the optimization problem does not grow with the amount of data collected.

More specifically, since $Q \in \mathbb{S}^n$, $q \in \mR^n$, $\{P_{t} \in \mathbb{S}^n_+\}_{t = 1:\nu}$, $\{ \eta_{t} \in \mR^{n} \}_{t = 1:\nu}$, and $ \{ \xi_t \in \mR \}_{t=1:\nu-1}$, the number of variables in the problem is $n(n+1)/2 + n + \nu n(n+1)/2 + \nu n + \nu$.
Moreover, the LMI constraints in \eqref{eq:stochastic_IOC_opt_pro_last_const} are $\nu$ symmetric matrices of size $(m + n + 1) \times (m + n + 1)$. This means that, e.g., $n = 12$, $m= 4$, and $\nu = 80$ gives a problem with a total of $7370$ scalar variables and $80$ LMI constraints of size $17 \times 17$. As we demonstrate in Section~\ref{subsec:example_large_system}, this can be handled by standard off-the-shelf convex optimization solvers.

\section{Numerical examples}\label{sec:numerical_examples}
In this section, we present two numerical examples. The first example, in Section~\ref{subsec:example_large_system}, illustrates that the problem \eqref{eq:stochastic_ioc_approximation} can be solved efficiently with off-the-shelf convex optimization solvers. The second example, in Section~\ref{subsec:pursuit-evasion}, applies the developed methodology to a non-zero sum pursuit-evasion game, where the pursuer models the evaders objective function using collected data. In both examples, the problem is solved on a MacBook Pro with Apple M1 eight-core CPU and 16GB of RAM,  and the implementation is done using YALMIP \citep{Lofberg2004} in Matlab and solved by MOSEK \citep{mosek}.

\subsection{Demonstration of performance for a system with both modest size and planning horizon}\label{subsec:example_large_system}

To illustrate the performance of the method, we generate a system with modest size and modest planning horizon length. In particular, to ensure that Assumption~\ref{ass:controlability_and_full_rank} holds, we generate continuous-time matrices $\hat{A}\in\mR^{12\times 12}$ and $\hat{B}\in\mR^{12 \times 4}$ in controllable canonical form
\begin{align*}
\hat{A} = \begin{bmatrix}
 & I_4 \\
& & I_4 \\
 & & & I_4\\
a_1  I_4  &a_2  I_4 & a_3  I_4 & a_{4} I_4
\end{bmatrix},\quad 
\hat{B} = 
\begin{bmatrix}
\mathbf{0}_4 \\ \mathbf{0}_4 \\ \mathbf{0}_4 \\I_4
\end{bmatrix}.
\end{align*}
We sample the coefficients $a_i$, $i=1:4$ from a standard normal distribution $\mathcal{N}(0,1)$. Next, we discretize the system by letting $A = e^{\hat{A}\Delta t}$ and $B =\int_0^{\Delta t} e^{\hat{A}t}dt\hat{B}$, using the sampling period $\Delta t = 0.1$. We choose $\Qtrue$ to be the Hermitian part of a randomly drawn matrix with shifted eigenvalues so that the smallest eigenvalue is $-0.1$.%
\footnote{We shift the eigenvalues in order to make sure that we get a forward problem that is well-posed, i.e., that $(\Qtrue,\qtrue) \in \mscrF(I)$.}
Namely, we let $G^\prime = (G + G^T)/2$ and  $\Qtrue = G^\prime - (\sigma_{\min}(G^\prime) + 0.1)I$, where $\sigma_{\min}(\cdot)$ is the smallest eigenvalue of a matrix and where $G \in\mR^{12\times 12}$ and elements are randomly drawn from $\mathcal{N}(0,1)$. We set $\nu=80$, and verify that the conditions in point 2) in Theorem~\ref{thm:indefinite_LQR} hold, i.e., that $\mfRtrue_t \succ 0$ for $t = 1 : 79$. The process noise $\mfw_t$ and measurement noise $\rvv_t$ are drawn from multi-variate normal distribution $\mathcal{N}(0,\Sigma_w)$ and $\mathcal{N}(0,\Sigma_v)$, respectively, with covariance matrices that are randomly generated from a Wishart distribution of degree $12$, i.e., with the same degrees of freedom as the dimension of the state.
Moreover, the Wishart distribution used to draw the covariance matrices has itself a random covariance of $0.01GG^T$, where each element in $G\in \mR^{12 \times12}$ was drawn from a standard normal distribution.
 Finally, we generate $M=5\times 10^4$ optimal trajectories, with the planning horizon lengths $N$ drawn uniformly from the integers in the interval $[2, 80]$ and with initial value $\bar{x}$ drawn from $\mathcal{N}(0, 100 I_{12})$.

The time to solve the optimization problem \eqref{eq:stochastic_ioc_approximation}, as reported by MOSEK, is $4.85$ seconds. Moreover, the relative error of the estimate is $0.0347$, where the relative error is computed as $\frac{\|\tilde{Q}_{est}-\tilde{\Qtrue}\|_F}{\| \tilde{\Qtrue} \|_F}$, where $\tilde{\Qtrue}$ is defined in \eqref{eq:extended_cost_mtx} and $\tilde{Q}_{est}$ is defined analogously. This shows that solving the IOC problem for systems of  ``moderate" size and planning horizon length can be done efficiently with off-the-shelf solvers.

\subsection{Identification of cost in non-zero sum pursuit-evasion game}\label{subsec:pursuit-evasion}

In this section, we demonstrate the performance of the proposed IOC algorithm on a non-zero sum two-dimensional finite-horizon linear-quadratic pursuit-evasion game, cf.~\citep{starr1969nonzero}. For a more extensive treatment of pursuit-evasion games, see, e.g., \citep{bacsar1982dynamic}. To this end, let $\mfx_t\in\mR^2$ be the distance between the pursuer and the evader, and let $\mfu_t^p,\mfu_t^e\in\mR^2$ be the control signal of the pursuer and the evader, respectively. 
In particular, for each realization $(\bar{x},N)$ of $(\bar{\mfx},\mfN)$, we assume that the evader solves the following problem 
\begin{subequations}\label{eq:stochastic_forward_problem_example}
\begin{align}
\min_{\substack{\mfx_{1:\nu} , \\ \mfu_{1:\nu}^e}}
  \;
  & \; J_N := \mE_{\mfw_{\nu-N+1:\nu-1}} \Big[\frac{1}{2} \mfx_{\nu}^{T} Q^e \mfx_{\nu} \!+\!\! \sum_{t = \nu - N+1}^{\nu-1} [\frac{1}{2} \mfx_{t}^{T} Q^e \mfx_{t}\nonumber\\
  & \;\;  + \frac{1}{2}\|\mfu_t^e\|^2 ] \Big]\label{eq:evader_cost}\\
  \st
  & \; \mfx_{t+1} = A\mfx_{t} + B\mfu_{t}^e+B\mfu_t^p+\mfw_t, \nonumber\\
  &\quad t = \nu\!-\!N\!+\!1:\nu\!-\!1, \label{eq:evader_dynamics} \\
  & \; \mfx_{t + 1} = \mfx_t, \quad t=1:\nu-N\label{eq:evader_before_init_cond} \\
  & \; \mfx_1 = \bar{x}, \label{eq:evader_init_cond}\\
  & \; \mfu_1^e=\ldots=\mfu_{\nu-N}^e = 0,\label{eq:evader_stand_still}
\end{align}
\end{subequations}
where $(A,B)$ is discretized in the same way as in Sec.~\ref{subsec:example_large_system} from the continuous-time dynamics $\dot{\mfx}=\hat{A}\mfx+\hat{B}\mfu^e+\hat{B}\mfu^p$ using the sampling period $\Delta t = 0.1$, and where $\nu=20$.
Notably,  $Q^e \prec 0$.
In practice, as a pursuer, $Q^e$ is unknown. In order to gain advantages over the evader and predict its future movements, as a pursuer, one can first use some ``trivial" and ``dummy" movements $\mfu_{\nu-N+1:\nu}^p$ that are easy for the evader to predict (i.e., known by the evader) in the first a few rounds of the game. During these rounds, the pursuer collects the evader's behaviour data and use the proposed IOC algorithm to estimate $Q^e$. In particular, here we assume the pursuer choose control $\mfu_t^p$ to be a constant during the data collection phase for convenience. Consequently, the forcing term $d = B\mfu_t^p$ would be constant for the evader (cf.~\eqref{eq:stochastic_forward_problem_dynamics}).%
\footnote{In fact, it does not matter what kinds of strategy the pursuer uses in the data collection phase, as long as the evader can foresee it, since, 
as mentioned in Sec.~\ref{rem:forward_problem_formulation_rem}, the results still hold for time-varying $d_t$.}
The pursuer observes the noisy distance (see \eqref{eq:noisy_observation}) between the pursuer and evader, which is the optimal solution to \eqref{eq:stochastic_forward_problem_example}.

To simulate this, we choose $\hat{A} = 0$, $\hat{B} = I_2$, $Q^e=-0.1I_2$, and for each time step in the trajectories process noise $\mfw_t$ and measurement noise $\rvv_t$ are drawn  from multi-variate normal distribution $\mathcal{N}(0,\Sigma_w)$ and $\mathcal{N}(0,\Sigma_v)$, respectively, with covariance matrices
\[
\Sigma_w \approx 10^{-2}
\begin{bmatrix}
1.04 & 0.68 \\
0.68 & 1.00
\end{bmatrix}
\; \text{and} \quad
\Sigma_v \approx 10^{-2}
\begin{bmatrix}
2.33 &  -2.25 \\
-2.25 & 2.18
\end{bmatrix}.
\]
The latter matrices were randomly generated by drawing two elements from a Wishart distribution of degree 2, i.e., with the same degrees of freedom as the dimension of the state. 
The Wishart distribution is itself generated analogously to the distribution in Section~\ref{subsec:example_large_system}.
As ``dummy" movements for the pursuer, we choose $\mfu_t^p = [-1,-1]^T,\;t=\nu-N+1:\nu$, and hence the constant forcing term in the dynamics is given by $d=B[-1,-1]^T$. Finally, the random variable $\mfN$ is taken to be uniformly distributed on the integers between $2$ and $\nu = 20$.

To test the performance of the algorithm, we generate $100$ batches of trajectories, where each batch consists of $50 000$ trajectories. For each batch, we divide the trajectories into groups of size $M = 100 + 100 (k-1) $, for $k = 1, \ldots, 500$,  where each larger group contains all the trajectories of a smaller group. For each such group of trajectories, we solve the IOC problem (with $\varphi$ set to $10^{6}$), and this procedure is repeated for all the $100$ batches. This means that we obtain estimates $Q_{est}^{\ell, M}$, for $\ell = 1, \ldots, 100$ and $M = 100, 200, \ldots, 50000$. For each value of $M$, the relative error $\|Q_{est}^{\ell, M} - Q^e\|_F / \| Q^e\|_F$ is averaged over the batches, and the resulting empirical mean and empirical standard deviation (as a function of $M$) is shown in Figure~\ref{fig:convergence_rel_error_mean_std}.
From the figure we see that, in line with the statistical consistency proved in Theorem~\ref{thm:statistical_consistency}, both the mean and the standard deviation decreases with increasing $M$.
Moreover, in Figure~\ref{fig:convergence_rel_error_mean_std} the logarithm of the mean and the logarithm of the standard deviation appears to be (approximately) affine in $\log(M)$. The figure also shows the corresponding lines obtained by fitting an affine model to each of the two sets of logarithmic data. From this fit, we see that $\texttt{Mean of relative error} \approx \mathcal{O}(M^{-0.48})$ and $\texttt{Standard deviation of relative error} \approx \mathcal{O}(M^{-0.55})$. We hence suspect that the convergence rate is $\mathcal{O}(M^{-0.5})$, and that $\sqrt{M}(Q_M-\Qtrue)$ is asymptotically normal, just like most M-estimators such as maximum log-likelihood \citep[p.~51]{van1998asymptotic}. However, a theoretical analysis of this is left for future work.

\begin{figure}[!htpb]
    \centering
    \includegraphics[trim=1.5cm 0cm 2.cm 0.2cm, clip, width = \columnwidth]{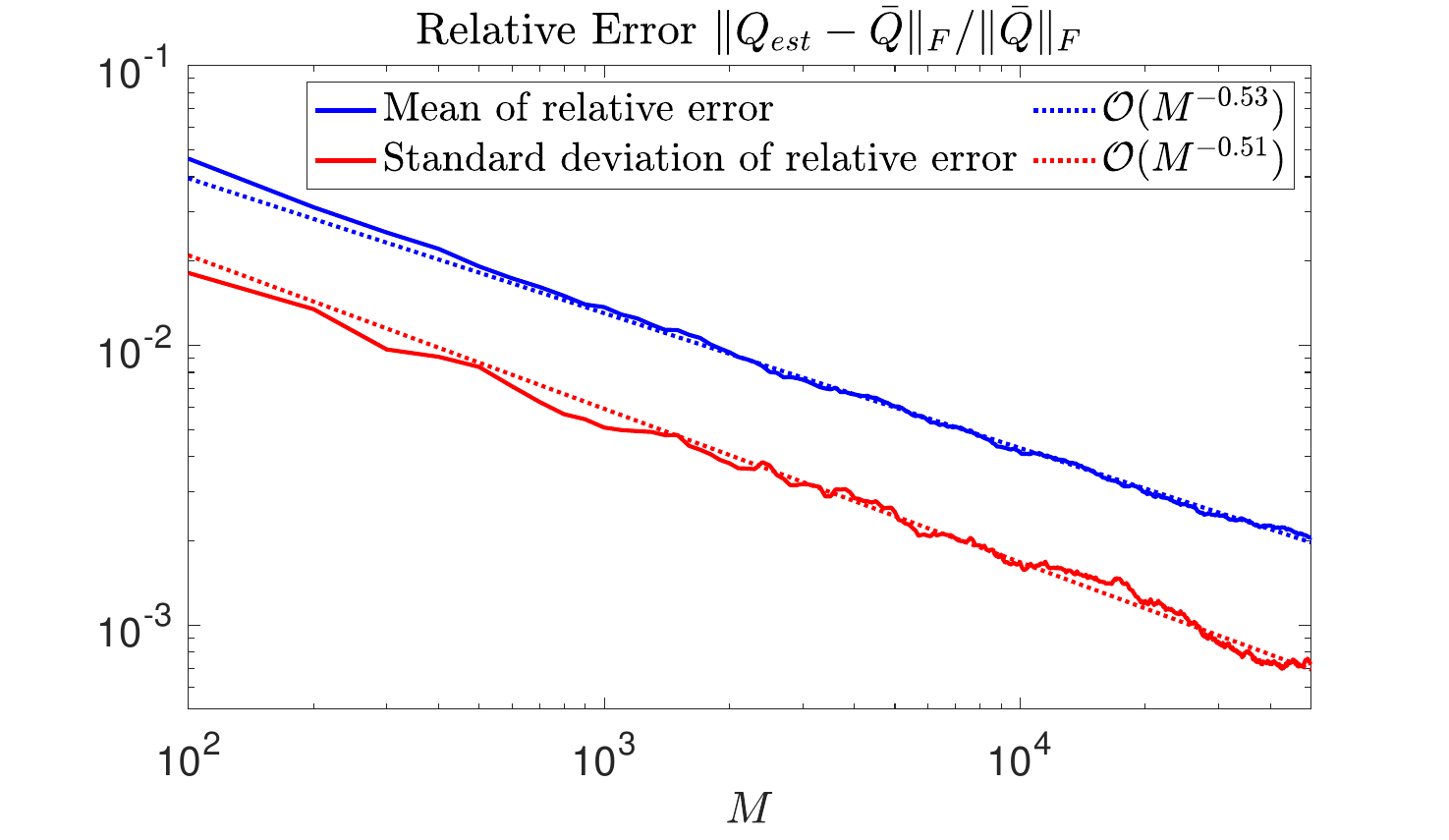}
    \caption{Log-log plot of the mean and standard deviation of the relative error of $Q_{est}$ as a function of the number of trajectories. The estimates are obtained using noisy data, as described in Section~\ref{subsec:pursuit-evasion}.}
    \label{fig:convergence_rel_error_mean_std}
\end{figure}

\section{Conclusion}\label{sec:conclusion}
In this work, we have considered the inverse optimal control problem for discrete-time finite-horizon general indefinite linear-quadratic problems with stochastic planning horizons. We first investigate the necessary and sufficient conditions for when the forward problem is solvable. The identifiability of the corresponding inverse optimal control problem is analyzed and proved. Furthermore, based on the underlying necessary and sufficient condition, we construct the estimator of the inverse optimal control problem as the solution to a convex optimization problem, and prove that the estimator is statistically consistent. The performance of the estimator is illustrated on a numerical example of identifying the evaders cost in non-zero sum pursuit-evasion game.

\begin{ack}                               
The authors would like to thank the anonymous reviewers for helpful comments that helped improve the paper.
\end{ack}

\appendix

\section{Deferred proofs}

\begin{proof}[Proof of Proposition~\ref{prop:stability_longest_horizon}.]
We show the contraposition of the statement, i.e., that if  $(\Qtrue, \qtrue,\Rtrue)$ is such that there exists an initial value $\bar{x}\in\mR^n$ and a time-horizon length $N\in\{2,\ldots,\nu\}$ so that the optimal control problem \eqref{eq:stochastic_forward_problem} is unbounded from below, then there exists an initial value $\bar{x}^\prime$ so that \eqref{eq:stochastic_forward_problem} is unbounded from below for planning horizon length $\nu$. To this end, consider planning horizon $\mfN = \nu$. By Theorem~\ref{thm:indefinite_LQR}, if $(\Qtrue, \qtrue) \not \in \mscrF(\Rtrue)$ then there exists an $N$ such that 
\eqref{eq:psd_iff_cond} or \eqref{eq:kernel_containment} does not hold for $t = \nu - N +1$. Splitting the summation in the objective function as
\begin{align*}
J_\nu & = \mE_{\mfw_{1:\nu-1}} \Big[\frac{1}{2} \mfx_{\nu}^{T} \Qtrue \mfx_{\nu} +\qtrue^T\mfx_{\nu}  \nonumber\\
& +\sum_{t = \nu - N+1}^{\nu-1} [\frac{1}{2} \mfx_{t}^{T} \Qtrue \mfx_{t} +\qtrue^T\mfx_t + \frac{1}{2}\mfu_{t}^T\Rtrue\mfu_t ]  \\
& + \sum_{t = 1}^{\nu - N} [\frac{1}{2} \mfx_{t}^{T} \Qtrue \mfx_{t} +\qtrue^T\mfx_t + \frac{1}{2}\mfu_{t}^T\Rtrue\mfu_t ] \Big], \\
\end{align*}
and following along the lines of the proof of ``1)$\implies$2)'' in Theorem~\ref{thm:indefinite_LQR}, similar to  \eqref{eq:JN_inequality} we get the following inequality
\begin{align*}
J_\nu & \le \mE_{\mfw_{1:\nu-1}} \Big[\frac{1}{2}\mfu_{\nu-N+1}^T\mfRtrue_{\nu-N+1}\mfu_{\nu-N+1}\\
&+\mfx_{\nu-N+1}^T\mfStrue_{\nu-N+1}^T\mfu_{\nu-N+1}+\gtrue_{\nu-N+1}^T\mfu_{\nu-N+1}\\
&+\mfx_{\nu-N+1}^T\mfStrue_{\nu-N+1}^T\mfRtrue_{\nu-N+1}^\dagger\mfStrue_{\nu-N+1}\mfx_{\nu-N+1}\\
&+\gtrue_{\nu-N+1}^T\mfRtrue_{\nu-N+1}^\dagger \mfStrue_{\nu-N+1}\mfx_{\nu-N+1}+\tau(\mfx_{\nu-N+1})\\
&+ \sum_{t = 1}^{\nu - N} [\frac{1}{2} \mfx_{t}^{T} \Qtrue \mfx_{t} +\qtrue^T\mfx_t + \frac{1}{2}\mfu_{t}^T\Rtrue\mfu_t ]\Big].
\end{align*}
Moreover, from the same proof we know that we can select $\mfx_{\nu-N+1}$ in order to make the terms outside of the last summation unbounded from below. Now, by invertability of $A$, we can select $\mfu_t = 0$ and $\mfx_t = A^{-1}(\mfx_{t+1} - d - \mfw_t)$  for $t = 1:\nu-N$. For any value of $\mfx_{\nu-N+1}$, this gives an initial condition and a sequence of states and controls that fulfill the constraints \eqref{eq:stochastic_forward_problem_dynamics}-\eqref{eq:stochastic_control_stand_still} (note that \eqref{eq:stochastic_forward_problem_before_init_cond} and \eqref{eq:stochastic_control_stand_still} are vacuous since $\mfN = \nu$). Moreover it is easily seen that $J_\nu$ is bounded from above by an expression similar to the one in the previous proof, but containing an additional constant $\tilde{\tau}(\mfx_{\nu-N+1})$. Following a logic similar to the reminder of the proof of ``1)$\implies$2)'' in Theorem~\ref{thm:indefinite_LQR}, the result follows.
\end{proof}

\begin{proof}[Proof of Lemma~\ref{lem:positive_definite_cov}.]
Let $N \in \{2, \ldots, \nu\}$ be such that $\mP(\mfN = N) > 0$. The matrix $\cov_{\bar{\mfx} \mid \mfN = N}(\bar{\mfx},\bar{\mfx})$ is symmetric. Let $\cov_{\bar{\mfx} \mid \mfN = N}(\bar{\mfx},\bar{\mfx}) = \sum_{i = 1}^n \lambda_i v_i$ be an orthonormal eigen-decomposition of the matrix. This means that we can write $\bar{\mfx} = \sum_{i=1}^n \bm{\alpha}_i v_i$ for some real-valued random variables $\bm{\alpha}_i$,  $i = 1, \ldots, n$.

Assume that $\cov_{\bar{\mfx} \mid \mfN = N}(\bar{\mfx},\bar{\mfx})$ is not (strictly) positive definite. Then at least one eigenvalue is zero; without loss of generality, let $\lambda_1 = 0$. Then we have that
\begin{align*}
0 & = v_1^T \cov_{\bar{\mfx} \mid \mfN = N}(\bar{\mfx},\bar{\mfx}) v_1 \\
& = \mE_{\bar{\mfx} \mid \mfN = N}(v_1^T \bar{\mfx}\bar{\mfx}^T v_1) -  \mE_{\bar{\mfx} \mid \mfN = N}(v_1^T \bar{\mfx})  \mE_{\bar{\mfx} \mid \mfN = N}(\bar{\mfx}^T v_1) \\
&= \mE_{\bar{\mfx} \mid \mfN = N}(\bm{\alpha}_1^2) -  \mE_{\bar{\mfx} \mid \mfN = N}(\bm{\alpha}_1)  \mE_{\bar{\mfx}|\mfN=N}(\bm{\alpha}_1),
\end{align*}
and thus that $(\mE_{\bar{\mfx} \mid \mfN = N}(\bm{\alpha}_1))^2 = \mE_{\bar{\mfx}|\mfN=N}(\bm{\alpha}_1^2)$. By Jensen's inequality \citep[Prop.~9.24]{bauschke2017convex}, we know that $(\mE_{\bar{\mfx} \mid \mfN = N}(\bm{\alpha}_1))^2 \leq \mE_{\bar{\mfx} \mid \mfN = N}(\bm{\alpha}_1^2)$, and by following the proof of \citep[Prop.~9.24]{bauschke2017convex}, we see that equality holds if and only if $\bm{\alpha}_1$ is constant a.s.~(otherwise, the inequality from \citep[Thm.~9.23]{bauschke2017convex} is strict at some points). To this end, let $\bm{\alpha}_1 = c$ a.s.~for some constant $c$. If $c = 0$, then for $\chi = v_1$, Assumption~\ref{ass:persistent_excitation} does not hold. If $c \neq 0$, then the probability mass of $\bar{\mfx}$ is located on a hyperplane defined by $\bm{\alpha}_1 = c$, which does not pass through the origin. In this case, let $\chi = v_2$ and note that for $\epsilon < c$ we have that $\mP(\bar{\mfx} \in \mathscr{B}^{n}_{\epsilon}(\rho v_2) ) = 0$ for all $\rho$, hence violating Assumption~\ref{ass:persistent_excitation}.
Therefore, we must have $\cov_{\bar{\mfx} \mid \mfN = N}(\bar{\mfx},\bar{\mfx}) \succ 0$.
\end{proof}

\begin{proof}[Proof of Lemma~\ref{lem:stochastic_persistent_excitation}.]
Note that
\begin{align*}
&\cov_{\bar{\mfx}|\mfN=N}(\bar{\mfx},\bar{\mfx}) = \mE_{\bar{\mfx}|\mfN=N}[\bar{\mfx}\bar{\mfx}^T]-\mE_{\bar{\mfx}|\mfN=N}[\bar{\mfx}]\mE_{\bar{\mfx}|\mfN=N}[\bar{\mfx}]^T\\
&=\underbrace{\mE_{\bar{\mfx}|\mfN=N}\left[
\begin{bmatrix}
\bar{\mfx}\bar{\mfx}^T &\bar{\mfx}\\
\bar{\mfx}^T &1
\end{bmatrix}
\right]}_{\mE_{\bar{\mfx}|\mfN=N}[\bar{\tilde{\mfx}}\bar{\tilde{\mfx}}^T]}\backslash 1.
\end{align*}
Hence by Lemma \ref{lem:positive_definite_cov} and  \citep[Thm.~1.12]{horn2005basic},  we know that for all $N$ such that $\mP(\mfN = N) > 0$, $\mE_{\bar{\mfx}|\mfN=N}[\tilde{\bar{\mfx}}\tilde{\bar{\mfx}}^T]\succ 0$.
On the other hand, by Assumption \ref{ass:IID}, $\mfw_t$ is uncorrelated with
the noiseless $\mfz_t := A\mfx_t+B\mfu_t + d$, for $t = \nu-N+1:\nu-1$, and for such $t$ it thus hold that
\begin{align*}
&\mE_{\mfx_{t+1}|\mfN=N}\left[\tilde{\mfx}_{t+1}\tilde{\mfx}_{t+1}^T\right]\nonumber\\
&=\mE_{\mfx_{t+1}|\mfN=N}\left[
\begin{bmatrix}
\mfz_t+\mfw_t\\1
\end{bmatrix}
\begin{bmatrix}
\mfz_t^T+\mfw_t^T &1
\end{bmatrix}
\right]\nonumber\\
&=\mE_{\mfx_t|\mfN=N}[\tilde{\mfz}_{t} \tilde{\mfz}_{t}^T]+
\begin{bmatrix}
\Sigma_w &0\\
0 &0
\end{bmatrix} \\
& = \tilde{A}_{cl}(t;\Qtrue,\qtrue) \mE_{\mfx_t|\mfN=N}[\tilde{\mfx}_{t} \tilde{\mfx}_{t}^T] \tilde{A}_{cl}(t;\Qtrue,\qtrue)^T+
\begin{bmatrix}
\Sigma_w &0\\
0 &0
\end{bmatrix},
\end{align*}
where $\tilde{\mfz}_{t} = \begin{bmatrix} \mfz_t^T & 1 \end{bmatrix}^T$. In particular, note that
\begin{align}
&\mE_{\mfx_{\nu - N + 2}|\mfN=N}\left[\tilde{\mfx}_{\nu - N + 2}\tilde{\mfx}_{\nu - N + 2}^T\right] \nonumber \\
& = \mE_{\mfx_{\nu - N + 1}|\mfN=N}[\tilde{\mfz}_{\nu - N + 1} \tilde{\mfz}_{\nu - N + 1}^T]  +
\begin{bmatrix}
\Sigma_w &0\\
0 &0
\end{bmatrix} \nonumber \\
& = 
\tilde{A}_{cl}(\nu - N + 1;\Qtrue,\qtrue)  \underbrace{\mE_{\bar{\mfx}|\mfN=N}[\bar{\tilde{\mfx}} \bar{\tilde{\mfx}}^T]}_{\succ 0}  \tilde{A}_{cl}(\nu - N + 1;\Qtrue,\qtrue)^T  \nonumber \\
& + 
\underbrace{
\begin{bmatrix}
\Sigma_w &0\\
0 &0
\end{bmatrix}}_{ \succeq 0} \succ 0, \label{eq:induction_start}
\end{align}
since $\tilde{A}_{cl}(t;\Qtrue,\qtrue)$ is invertible for all $t=1:\nu-1$ and since positive definiteness is invariant under congruence.
By induction, we thus have $\mE_{\mfx_t|\mfN=N}[\tilde{\mfx}_t\tilde{\mfx}_t^T]\succ 0$ for all $N$ such that $\mP(\mfN = N) > 0$, and in particular thus for $\mfN = \nu$ by Assumption~\ref{ass:planning_horizon}.

Now we show $\mE[\| \tilde{\mfx}_t \|^2]<\infty$. First, note that $\mE[\|\bar{\tilde{\mfx}}\|^2] = \mE[\|\bar{\mfx}\|^2]+1$.
By Assumption \ref{ass:persistent_excitation} we have $\mE[\|\bar{\mfx}\|^2]<\infty$, and hence $\mE[\|\bar{\tilde{\mfx}}\|^2]<\infty$.
In addition, by taking trace on both sides of \eqref{eq:induction_start}, moving the trace inside the expectation, rearranging terms, and using Cauchy-Schwartz inequality, we have that
\begin{align*}
& \mE_{\mfx_{\nu - N + 2}|\mfN=N}\left[ \| \tilde{\mfx}_{\nu - N + 2}\|^2 \right] \\
&  \leq 
\mE_{\bar{\mfx}|\mfN=N}[\| \bar{\tilde{\mfx}} \|^2 ] \cdot \| \tilde{A}_{cl}(\nu - N + 1;\Qtrue,\qtrue) \|_F^2 + \tr(\Sigma_w).
\end{align*}
Using an induction argument similar to the one above, we thus have that $\mE_{\mfx_{t}|\mfN=N} [ \| \tilde{\mfx}_{t} \|^2 ] < \infty$ for all $t = \nu - N + 1:\nu$ and all $N$ such that $\mP(\mfN = N) > 0$. Finally, 
\begin{align*}
& \mE[\| \tilde{\mfx}_t \|^2] =  \!\! \sum_{N = 1}^{\nu} \! \mP(\mfN=N) \mE_{\tilde{\mfx}_t| \mfN=N}[\|\tilde{\mfx}_t\|^2] < \infty,
\end{align*}
which proves the lemma.
\end{proof}

\begin{proof}[Proof of Theorem~\ref{thm:IOC_Q_optimal}]
As can be seen from the construction of the objective function in Section~\ref{subsec:construction_and_approximation}, and in view of  \eqref{eq:stochastic_control_stand_still},
\begin{align*}
& \Psi(Q, q,  P_{1:\nu}, \eta_{1:\nu},  \xi_{1:\nu-1}) + \sum_{t = 1}^{\nu-1}  \mE_{\mfx_t, \mfN}\left[\frac{1}{2} \|\mfu_t\|^2\right] \\
& = \! \sum_{N = 2}^{\nu}\mP(\mfN=N)\mE_{\mfy_{\nu-N+1:\nu}|\mfN=N}\left[\tilde{\psi}_N(\cdot) \right] \\
& \quad  + \sum_{N = 2}^{\nu}\mP(\mfN=N) \sum_{t = \nu - N + 1}^{\nu-1}  \mE_{\mfx_t \mid \mfN=N}\left[\frac{1}{2} \|\mfu_t\|^2\right] \\
& = \! \sum_{N = 2}^{\nu} \! \mP(\mfN=N) \!\!\!\!\!\!\! \sum_{t = \nu-N+1}^{\nu-1} \!\!\!\!\! \Big( \mE_{\mfx_{t}|\mfN=N}[\psi_{t,N}(Q,q;\mfx_t,\mfu_t) \! + \!  \frac{1}{2}\|\mfu_t\|^2 ] \Big)
\end{align*}
Now, by the definition of $\psi_{t,N}(Q,q;\mfx_t,\mfu_t)$ in \eqref{eq:psi_t_N}, as in \eqref{eq:obj_construct_x_t} we have that
\begin{align}
&\mE_{\mfx_t|\mfN=N}[\psi_{t,N}(Q,q;\mfx_t,\mfu_t)]\nonumber\\
&= \mE_{\mfx_t|\mfN=N}\Big[\mE_{\mfw_t}\Big[\frac{1}{2}(A\mfx_t+B\mfu_t+d+\mfw_t)^TP_{t+1}\nonumber\\
&\quad \times (A\mfx_t+B\mfu_t+d+\mfw_t)+\eta_{t+1}^T(A\mfx_t+B\mfu_t+d+\mfw_t) \Big]\nonumber\\
&\quad+\frac{1}{2}\mfx_t^TQ\mfx_t+q^T\mfx_t+\frac{1}{2}\|\mfu_t\|^2-\frac{1}{2}\mfx_t^TP_t\mfx_t-\eta_t^T\mfx_t\nonumber\\
&\quad+\frac{1}{2}g_t^T\mfR_t^\dagger g_t-\frac{1}{2}d^TP_{t+1}d-\eta_t^Td-\frac{1}{2}\tr(P_{t+1}\Sigma_w)\Big]. \nonumber
\end{align}
By opening the parenthesis and computing the expectation with respect to $\mfw_t$, and using Assumption~\ref{ass:IID}, we have that
\begin{align}
& \mE_{\mfx_t|\mfN=N}[\psi_{t,N}(Q,q;\mfx_t,\mfu_t)]\nonumber\\
&=\mE_{\mfx_t|\mfN=N}\Big[\frac{1}{2}(A\mfx_t+B\mfu_t+d)^TP_{t+1}(A\mfx_t+B\mfu_t+d)\nonumber\\
&\quad+\eta_{t+1}^T(A\mfx_t+B\mfu_t+d)-\frac{1}{2}\mfx_t^TP_t\mfx_t-\eta_t^T\mfx_t+\frac{1}{2}\xi_t\nonumber\\
&\quad+\frac{1}{2}\mfx_t^TQ\mfx_t+q^T\mfx_t\Big]\nonumber\\
&=\mE_{\mfx_t|\mfN=N}\Big[\frac{1}{2}
\begin{bmatrix}
\mfu_t^T &\mfx_t^T &1
\end{bmatrix}H_t
\begin{bmatrix}
\mfu_t \\ \mfx_t \\1
\end{bmatrix}
-\frac{1}{2}\|\mfu_t\|^2\Big],\label{eq:psi_t_N_rewrite}
\end{align}
where $H_t$ has the form \eqref{eq:nes_suff_existence_solution_2}. On the other hand, since $(Q,q,P_{1:\nu},\eta_{1:\nu},\xi_{1:\nu-1})$ is feasible, by the constraint \eqref{eq:stochastic_IOC_opt_pro_last_const} we have that $H_t\succeq 0$. Therefore, it holds that
\begin{align}
&\Psi(\cdot) =\sum_{N=2}^{\nu}\mP(\mfN=N)\sum_{t=\nu-N+1}^{\nu-1}
\mE_{\mfx_t|\mfN=N}\Big[\nonumber\\
&\qquad \frac{1}{2}
\begin{bmatrix}
\mfu_t^T &\mfx_t^T &1
\end{bmatrix}H_t
\begin{bmatrix}
\mfu_t \\ \mfx_t \\1
\end{bmatrix} -\frac{1}{2}\|\mfu_t\|^2\Big]\label{eq:Psi_rewrite}\\
& \geq - \sum_{N = 2}^{\nu}\mP(\mfN=N) \sum_{t = \nu - N + 1}^{\nu-1}  \mE_{\mfx_t \mid \mfN=N}\left[\frac{1}{2} \|\mfu_t\|^2\right] \nonumber \\
& = -  \sum_{t = 1}^{\nu-1}  \mE_{\mfx_t, \mfN}\left[\frac{1}{2} \|\mfu_t\|^2\right]. \nonumber
\end{align}
This proves the first part of the theorem.

Next, we show that the lower bound is actually attained by $(\bar{Q}, \bar{q},\Ptrue_{1:\nu},\etatrue_{1:\nu},\xitrue_{1:\nu-1})$. By using Theorem~\ref{thm:indefinite_LQR} we have that the true underlying $\bar{Q}$ and $\bar{q}$, together with corresponding solution $\{\bar{P}_{t} \in \mathbb{S}^n \}_{t = 1:\nu}$ and $\{ \bar{\eta}_{t} \in\mathbb{R}^{n} \}_{t = 1:\nu}$ to the Riccati recursions \eqref{eq:generalized_riccati_iterations}, and with $\xitrue_t = \gtrue^T_t \mfRtrue_t^\dagger \gtrue_t$ for $t=1:\nu-1$, is a feasible solution to the optimization problem, if $\varphi$ is large enough. For this feasible solution $(\bar{Q}, \bar{q},\Ptrue_{1:\nu},\etatrue_{1:\nu},\xitrue_{1:\nu-1})$, we can decompose the corresponding $\bar{H}_t$ as in \eqref{eq:H_decomp} and in this case, together with \eqref{eq:optimal_ctrl_formula_no_kernel}, \eqref{eq:psi_t_N_rewrite} can be written as
\begin{align*}
&\mE_{\mfx_t|\mfN=N}[\psi_{t,N}(Q,q;\mfx_t,\mfu_t)]\\
&=\mE_{\mfx_t|\mfN=N}\Big[\frac{1}{2}
\begin{bmatrix}
\mfu_t^T &\mfx_t^T &1
\end{bmatrix}\bar{H}_t
\begin{bmatrix}
\mfu_t \\ \mfx_t \\1
\end{bmatrix}
-\frac{1}{2}\|\mfu_t\|^2\Big]\\
\\
&=\mE_{\mfx_t|\mfN=N}\Big[\frac{1}{2}
\begin{bmatrix}
\mfu_t^T &\mfx_t^T &1
\end{bmatrix}
\begin{bmatrix}
\mfRtrue_t \\ \mfStrue_t^T \\ \gtrue_t^T
\end{bmatrix}
\mfRtrue_t^\dagger
\begin{bmatrix}
\mfRtrue_t & \mfStrue_t^T & \gtrue_t^T
\end{bmatrix}
\begin{bmatrix}
\mfu_t \\ \mfx_t^T \\1
\end{bmatrix}\\
&\quad -\frac{1}{2}\|\mfu_t\|^2\Big]\\
&=\mE_{\mfx_t|\mfN=N}\Big[\underbrace{\frac{1}{2}\|(\mfRtrue_t)^{\frac{1}{2}}(\mfRtrue_t\mfu_t+\mfStrue_t\mfx_t+\gtrue_t)\|^2}_{=0}-\frac{1}{2}\|\mfu_t\|^2\Big].
\end{align*}
This shows that the lower bound for the objective function $\Psi(\cdot)$ is attained by $(\bar{Q}, \bar{q},\Ptrue_{1:\nu},\etatrue_{1:\nu},\xitrue_{1:\nu-1})$.

Finally, we show that the ``true" $(\Qtrue,\qtrue,\Ptrue_{1:\nu}, \etatrue_{1:\nu}, \xitrue_{1:\nu-1})$ is actually the unique global optimizer to \eqref{eq:stochastic_IOC_opt_pro}.
To this end, let $(Q^\star,q^\star, P^\star_{1:N},\eta^\star_{1:N},\xi_{1:N-1}^\star)$ be an optimal solution to \eqref{eq:stochastic_IOC_opt_pro}, and let us also use $^\star$ to denote other vectors and matrices obtained using this optimal solution. Since the solution is optimal, it must be feasible, which implies that $H_t^\star\succeq 0,\forall t=1:\nu-1$. Hence it follows that $\mathfrak{R}_t^\star\succeq 0$, $\ker(\mathfrak{R}_t^\star)\subset \left[\ker(\mathfrak{S}_t^{\star T})\cap \ker(g_t^{\star T})\right]$ and $H_t^\star\backslash\mathfrak{R}_t^\star\succeq 0$, see \citep[Thm.~1.20, p.~43]{horn2005basic}. In view of the above ``kernel containment" and \eqref{eq:Psi_rewrite}, the optimal objective value can be further rewritten as
\begin{align*}
&\Psi(Q^\star,q^\star, P^\star_{1:\nu},\eta^\star_{1:\nu},\xi_{1:\nu-1}^\star) = \sum_{N=2}^{\nu}\mP(\mfN=N)\sum_{t=\nu-N+1}^{\nu-1}\\
& 
\mE_{\mfx_t|\mfN=N}\Big[ \frac{1}{2}
\begin{bmatrix}
\mfu_t^{T} &\mfx_t^{T} & 1
\end{bmatrix}
\begin{bmatrix}
I \\
\mathfrak{S}_t^{\star T}\mathfrak{R}_t^{\star \dagger} &I\\
g_t^{\star T}\mathfrak{R}_t^{\star \dagger}& &I
\end{bmatrix}\\
&\begin{bmatrix}
\mathfrak{R}_t^{\star} \\
&A^TP_{t+1}^\star A\!+\!Q^\star\!-\!P_t^\star \!-\!\mathfrak{S}_t^{\star T}\mathfrak{R}_t^{\star \dagger}\mathfrak{S}_t^\star &\beta_t^\star \!-\!\mathfrak{S}_t^{\star T}\mathfrak{R}_t^{\star\dagger}g_t^\star\\
&\beta_t^{\star T}\!-\!g_t^{\star T}\mathfrak{R}_t^{\star\dagger}\mathfrak{S}_t^\star &\xi_t^\star\!-\!g_t^\star\mathfrak{R}_t^{\star \dagger}g_t^\star
\end{bmatrix}\\
&\begin{bmatrix}
I &\mathfrak{R}_t^{\star\dagger}\mathfrak{S}_t^{\star \dagger} &\mathfrak{R}_t^{\star\dagger}g_t^{\star\dagger}\\
&I\\
& &I
\end{bmatrix}
\begin{bmatrix}
\mfu_t \\ \mfx_t \\ 1
\end{bmatrix}
- \frac{1}{2}\|\mfu_{t}\|^2\Big]
\end{align*}
Recalling the notation $\tilde{\mfx}_{t}=[\mfx_t^T,1]^T$ and the fact that $\mfR_t^\star\succeq 0$, we in turn get
\begin{align*}
&\Psi(\cdot)=\sum_{N=2}^\nu \mP(\mfN=N)\sum_{t=\nu-N+1}^{\nu-1}\mE_{\mfx_t|\mfN=N}\Big[ - \frac{1}{2}\|\mfu_{t}\|^2\\
&+\frac{1}{2}
\begin{bmatrix}
\mfu_t^{T}+\mfx_t^{T}\mathfrak{S}_t^{\star T}\mathfrak{R}_t^{\star \dagger}+g_t^{\star T}\mathfrak{R}_t^{\star\dagger} &\mfx_t^{T} & 1
\end{bmatrix}\\
&\times\begin{bmatrix}
\mathfrak{R}_t^{\star} \\
&H_t^\star\backslash\mathfrak{R}_t^\star
\end{bmatrix}
\begin{bmatrix}
\mfu_t+\mathfrak{R}_t^{\star\dagger}\mathfrak{S}_t^\star \mfx_t+\mathfrak{R}_t^{\star\dagger}g_t^\star\\
\mfx_t\\1
\end{bmatrix} \Big]\\
&=\sum_{t=1}^{\nu-1}\!\mE_{\mfx_t,\mfN}\!\Big[- \frac{1}{2}\|\mfu_{t}\|^2\Big]\!+\!\sum_{N=2}^\nu \!\mP(\mfN=N)\!\!\sum_{t=\nu-N+1}^{\nu-1}\mE_{\mfx_t|\mfN=N}\Big[\\
&\!\frac{1}{2} \|(\mfR_t^\star)^{\frac{1}{2}}(\mfu_t \! + \! \mfR_t^{\star\dagger}
\begin{bmatrix}
\mfS_t^\star &g_t^\star
\end{bmatrix} \! \tilde{\mfx}_t)\|^2\!+\!\frac{1}{2}\!\tr\left((H_t^\star\backslash\mfR_t^\star)\tilde{\mfx}_t\tilde{\mfx}_t^T\right)\Big],
\end{align*}
where $(\cdot)^{\frac{1}{2}}$ denotes the (uniquely defined) positive semi-definite matrix square root (see \citep[Thm.~7.2.6]{horn2013matrix}).
Note that since $H_t^\star\backslash\mathfrak{R}_t^\star\succeq 0$, all terms except $\sum_{t=1}^{\nu-1}\!\mE_{\mfx_t,\mfN}\!\left[- \frac{1}{2}\|\mfu_{t}\|^2\right]$ are non-negative.
Hence, in order for the lower bound $\sum_{t=1}^{\nu-1}\!\mE_{\mfx_t,\mfN}\!\left[- \frac{1}{2}\|\mfu_{t}\|^2\right]$ to be attained, we must have that 
\begin{subequations}
\begin{align}
&\mE_{\mfx_t|\mfN=N}\left[ \| (\mfR_t^\star)^{\frac{1}{2}}(\mfu_t+\mfR_t^{\star\dagger}
\begin{bmatrix}
\mfS_t^\star &g_t^\star
\end{bmatrix}\tilde{\mfx}_t) \|^2 \right]=0, \label{eq:u_opt_sol_form}\\
&\mE_{\mfx_t|\mfN=N}\left[\tr\left((H_t^\star\backslash \mfR_t^\star)\tilde{\mfx}_t\tilde{\mfx}_t^T\right)\right]\!=\!\tr\left(H_t^\star\backslash \mfR_t^\star\right)\!\!\!\mE_{\mfx_t|\mfN=N}\!\!\left[\tilde{\mfx}_t\tilde{\mfx}_t^T\right]\nonumber\\
&\qquad=0, \quad t=\nu-N+1:\nu-1,\label{eq:riccati_opt_sol_form}
\end{align}
\end{subequations}
for all $N$ such that $\mP(\mfN=N)>0$. In particular, by Assumption~\ref{ass:planning_horizon} it must be true for $\mfN = \nu$.
From Lemma~\ref{lem:stochastic_persistent_excitation}, we know that $\mE_{\mfx_t|\mfN=\nu}[\tilde{\mfx}_t\tilde{\mfx}_t^T]\succ 0$.
Thus it follows that, for $N=\nu$, \eqref{eq:riccati_opt_sol_form} implies that $H_t^\star\backslash \mathfrak{R}_t^\star=0$ holds for $t=1:\nu-1$.  By using the observation in Remark~\ref{rem:rank_psd_and_riccati}, we therefore have that $(Q^\star,q^\star, P^\star_{1:\nu},\eta^\star_{1:\nu})$ satisfies the generalized Riccati iterations \eqref{eq:generalized_riccati_iterations}.

Now, to show that the optimal solution to \eqref{eq:stochastic_IOC_opt_pro} is unique, first assume that $(Q^\star,q^\star, P^\star_{1:\nu},\eta^\star_{1:\nu})$ is an optimal solution such that $\mfR_t^\star \succ 0$, for $t=1:\nu-1$. In this case, also $(\mfR_t^\star)^{\frac{1}{2}}\succ 0$ for $t=1:\nu-1$.
By \eqref{eq:u_opt_sol_form}, this means that, conditioned on $\mfN = \nu$, we have $\mfu_t = -\mfR_t^{\star\dagger}
\begin{bmatrix}
\mfS_t^\star &g_t^\star
\end{bmatrix}\tilde{\mfx}_t$ a.s.~for $t = 1:\nu-1$. But conditioned on $\mfN = \nu$, we also have that $\mfu_t = -\mfRtrue_t^{\dagger}
\begin{bmatrix}
\mfStrue_t &\gtrue_t
\end{bmatrix}\tilde{\mfx}_t$. Therefore
\[
\mfR_t^{\star\dagger}
\begin{bmatrix}
\mfS_t^\star &g_t^\star
\end{bmatrix}\tilde{\mfx}_t = \mfRtrue_t^{\dagger}
\begin{bmatrix}
\mfStrue_t &\gtrue_t
\end{bmatrix}\tilde{\mfx}_t, \text{ a.s.,}
\]
for $t=1:\nu-1$. Multiplying from the right with $\tilde{\mfx}_t^T$ and taking expectation $\mE_{\mfx_t|\mfN=\nu}$ on both sides, we have that
\[
\mfR_t^{\star\dagger}
\begin{bmatrix}
\mfS_t^\star &g_t^\star
\end{bmatrix} \mE_{\mfx_t|\mfN=\nu}[\tilde{\mfx}_t \tilde{\mfx}_t^T] = \mfRtrue_t^{\dagger}
\begin{bmatrix}
\mfStrue_t &\gtrue_t
\end{bmatrix}\mE_{\mfx_t|\mfN=\nu}[\tilde{\mfx}_t \tilde{\mfx}_t^T],
\]
for $t=1:\nu-1$. Using Lemma~\ref{lem:stochastic_persistent_excitation}, we know that $\mE_{\mfx_t|\mfN=\nu}[\tilde{\mfx}_t \tilde{\mfx}_t^T] \succ 0$ and hence the matrix is full rank. Therefore, it must hold that
\[
\mfR_t^{\star\dagger}
\begin{bmatrix}
\mfS_t^\star &g_t^\star
\end{bmatrix} = \mfRtrue_t^{\dagger}
\begin{bmatrix}
\mfStrue_t &\gtrue_t
\end{bmatrix}, \quad t=1:\nu-1,
\]
and thus, by \eqref{eq:tildeAcl}, that
\[
\tilde{A}_{cl}(t;Q^\star,q^\star)  = \tilde{A}_{cl}(t;\Qtrue,\qtrue), \quad t=1:\nu-1.
\]
By Proposition~\ref{prop:structural_identifiability}, we therefore have that $\Qtrue=Q^\star$ and that $\qtrue = q^\star$.  This implies that in the subset of the feasible region 
\eqref{eq:stochastic_IOC_opt_pro_first_const}-\eqref{eq:stochastic_IOC_opt_pro_last_const} where $\mfR_t \succ 0$, for $t=1:\nu-1$, it holds that $(\Qtrue, \qtrue, \Ptrue_{1:\nu}, \etatrue_{1:\nu}, \xitrue_{1:\nu-1})$ is the unique globally optimal solution to \eqref{eq:stochastic_IOC_opt_pro}.
Next, suppose that there exists a minimizer $(Q^{\star},q^{\star},P_{1:\nu}^{\star},\eta_{1:\nu}^{\star},\xi_{1:\nu-1}^{\star})$ such that $\mfR_t^\star \succeq  0$ but not strictly positive definite for some $t\in\{1,\ldots,\nu-1\}$. In particular, this means that $(Q^{\star},q^{\star},P_{1:\nu}^{\star},\eta_{1:\nu}^{\star},\xi_{1:\nu-1}^{\star}) \neq (\Qtrue, \qtrue,\Ptrue_{1:\nu},\etatrue_{1:\nu},\xitrue_{1:\nu-1})$.
Moreover, since \eqref{eq:stochastic_IOC_opt_pro} is a convex optimization problem that attains an optimal solution, the set of all optimal solutions is a nonempty convex set \citep[Thm.~27.2]{rockafellar1970convex}.
This means that $(\alpha\bar{Q}+(1-\alpha)Q^\star, \alpha\bar{q}+(1-\alpha)q^\star,\{\alpha\Ptrue_t+(1-\alpha)P_t^\star\}_{t=1}^\nu,\{\alpha\etatrue_t+(1-\alpha)\eta_t^\star\}_{t=1}^\nu,\{\alpha\xitrue_t+(1-\alpha)\xi_t^\star\}_{t=1}^{\nu-1})$ are all optimal for all $\alpha\in[0,1]$; for each $\alpha \in [0,1]$ we denote the optimal solution $(Q^{\alpha},q^{\alpha},P_{1:\nu}^{\alpha},\eta_{1:\nu}^{\alpha},\xi_{1:\nu-1}^{\alpha})$.
Since the eigenvalues of $\mfR_t$, $t=1:\nu-1$, depends smoothly on $P_t$ (see \eqref{eq:psd_kernel_containment_cond}), we can select $\alpha$ close enough to 1 so that $(Q^{\alpha},q^{\alpha},P_{1:\nu}^{\alpha},\eta_{1:\nu}^{\alpha},\xi_{1:\nu-1}^{\alpha})$ will be such that $\mfR_t^{\alpha} \succ 0$ for all $t=1:\nu-1$. However, this contradicts the fact that $(\Qtrue, \qtrue, \Ptrue_{1:\nu}, \etatrue_{1:\nu}, \xitrue_{1:\nu-1})$ is the unique globally optimal solution to \eqref{eq:stochastic_IOC_opt_pro} with $\mfR_t\succ 0$, $t=1:\nu-1$.
Therefore, there can be no optimal solution such that  $\mfR_t^\star$ is not (strictly) positive definite for all $t\in\{1,\ldots,\nu-1\}$, and hence $(\Qtrue, \qtrue, \Ptrue_{1:\nu}, \etatrue_{1:\nu}, \xitrue_{1:\nu-1})$ is the unique globally optimal solution to \eqref{eq:stochastic_IOC_opt_pro}.
\end{proof}

\bibliographystyle{plain}
\bibliography{}

\end{document}